\newtheorem{example}{Example}
\title[Bernoulli Quadratic Forms]{Asymptotic Distribution of Bernoulli Quadratic Forms}
\author{Bhaswar B. Bhattacharya}
\address{Department of Statistics, University of Pennsylvania, Philadelphia, USA,
{\tt bhaswar@wharton.upenn.edu}}
\author{Somabha Mukherjee}
\address{Department of Statistics, University of Pennsylvania, Philadelphia, USA,
{\tt somabha@wharton.upenn.edu}}
\author[Sumit Mukherjee]{Sumit Mukherjee\textsuperscript{*}}\thanks{\textsuperscript{*}Research partially supported by NSF grant DMS-1712037}
\address{Department of Statistics, Columbia University, New York, USA, {\tt  sm3949@columbia.edu}}
\begin{document}

\begin{abstract} Consider the random quadratic form $T_n=\sum_{1 \leq u < v \leq n} a_{uv} X_u X_v$, where $((a_{uv}))_{1 \leq u, v \leq n}$ is a $\{0, 1\}$-valued symmetric matrix with zeros on the diagonal, and $X_1,$ $X_2, \ldots, X_n$ are i.i.d. $\dBer(p_n)$. In this paper, we prove various characterization theorems about the limiting distribution of $T_n$, in the sparse regime, where $0 < p_n \ll 1$ such that $\E(T_n)=O(1)$. The main result is a decomposition theorem showing that distributional limits of $T_n$ is the sum of three components:  a mixture which consists of a quadratic function of independent Poisson variables; a linear Poisson mixture, where the mean of the mixture is itself a (possibly infinite) linear combination of independent Poisson random variables; and another independent Poisson component. This is accompanied with a universality result which allows us to replace the Bernoulli distribution with a large class of other discrete distributions. Another consequence of the general theorem is a necessary and sufficient condition for Poisson convergence, where an interesting second moment phenomenon emerges. 
\end{abstract}


\maketitle

\section{Introduction}

Let $X_1, X_2, \ldots, X_n$ be i.i.d. $\dBer(p_n)$, where $0< p_n \ll 1$.\footnote{For two sequences $\{a_n\}_{n \geq 1}$ and $\{b_n\}_{n \geq 1}$ of non-negative reals, $a_n \sim b_n$ means $a_n = (1+o(1))b_n$, and $a_n \ll b_n$ means $a_n= o(b_n)$.} Then the well-known Poisson approximation to the Binomial distribution shows that, given a $\{0, 1\}$-valued sequence $a_1, a_2, \ldots, a_n$, the {\it linear statistic} $$L_n=\sum_{i=1}^n a_i X_i \dto \dPois(\lambda),$$ whenever the mean $\E L_n=  p_n\sum_{i=1}^n a_i \rightarrow \lambda$. Conversely, if $0 < p_n \ll 1$ is such that $p_n\sum_{i=1}^n a_i =O(1)$, then whenever $L_n$ converges in distribution to a finite random variable, there exists $\lambda \geq 0$, such that $L_n$ converges to $\dPois(\lambda)$. In other words, in the sparse regime, where $0 < p_n \ll 1$ is chosen such that $\E(L_n)=O(1)$,  the Poisson distribution characterizes the limiting distribution of linear forms in Bernoulli variables. 

In this paper we address the analogous question for quadratic forms in Bernoulli random variables: Given a $\{0, 1\}$-valued symmetric matrix $((a_{uv}))_{1 \leq u, v \leq n}$ with zeros on the diagonal, consider the {\it Bernoulli quadratic form},  
\begin{align}\label{eq:T0}
T_n= \sum_{1 \leq u < v \leq n} a_{uv}X_u X_v,
\end{align}
where, as before, $X_1, X_2, \ldots, X_n$ are i.i.d. $\dBer(p_n)$. In this case, the {\it sparse regime} corresponds to choosing $0< p_n \ll 1$, such that 
\begin{align}\label{eq:sparse}
\E(T_n)=p_n^2 \sum_{1 \leq u < v  \leq n} a_{uv} =O(1).
\end{align} 
In this regime the random variable $T_n=O_P(1)$, therefore, it has distributional limits along subsequences. In fact, using Stein's method for Poisson approximation \cite{goldstein,arratia,barbourholstjanson,CDM}, it is easy to obtain various sufficient conditions on the matrix $((a_{uv}))_{1 \leq u, v \leq n}$ for which $T_n$ is asymptotically Poisson. However, unlike in the linear case, it is easy to construct matrices $((a_{uv}))_{1 \leq u, v \leq n}$ for which $T_n$ has a non-Poisson limit:

\begin{enumerate}

\item[(1)] Take $a_{uv}=1$, for all $1 \leq u \ne v \leq n$, and choose $p_n=\lambda/n$ (for some $\lambda >0$). Then $S_n=\sum_{u=1}^n X_u \dto N\sim \dPois(\lambda)$, and 
\begin{align}\label{eq:Kn}
T_n=\frac{1}{2} \sum_{1\leq u \ne v \leq n} X_u X_v={S_n \choose 2} \dto {N \choose 2},
\end{align}
which is a quadratic function of a Poisson random variable. 

\item[(2)] Take $b_n=\lfloor \sqrt n \rfloor $ and let $a_{uv}=a_{vu}=1$, for $1 \leq u \leq b_n$ and $u b_n+1 \leq v \leq u b_n+b_n$. Then 
$$T_n=\sum_{u=1}^{b_n} X_u \sum_{v=u b_n+1}^{u b_n+b_n} X_v.$$
Here, choosing $p_n = \lambda/\sqrt n$ (for some $\lambda >0$) ensures $\E (T_n) \to \lambda^2$. Then the random variables $J_u=\sum_{v=u b_n+1}^{ub_n+b_n} X_v \sim \dBin(\lfloor \sqrt n \rfloor, \frac{\lambda}{\sqrt n})$, are independent for $1 \leq u \leq b_n$. This implies, 
\begin{align}\label{eq:disjointK1n}
T_n = \sum_{u=1}^{b_n} X_u J_u \stackrel{D}= \dBin\left(\lfloor \sqrt n \rfloor \sum_{u=1}^{b_n} X_u, \frac{\lambda}{\sqrt n}\right) \dto \dPois(\lambda N),
\end{align}
where $N \sim\dPois(\lambda)$ (because  $\sum_{u=1}^{b_n} X_u \dto \dPois(\lambda)$). In this case, the limit is a Poisson distribution with a random mean, that is, it is a {\it Poisson mixture} \cite{lecam}.\footnote{Given a discrete random variable $X$, we denote by $Z \sim \dPois(X)$ a Poisson random variable with a random mean $X$. More precisely, for $z \in \{0, 1, \ldots, \}$, $\P(Z=z) =\E[\frac{e^{-X} X^z}{z!}]$.}  
\end{enumerate}

The different limits obtained in the examples above raise the question: {\it What are the possible limiting distributions of the Bernoulli quadratic form $T_n$ in the sparse regime} \eqref{eq:sparse}? In this paper, we prove a general decomposition theorem which allows us to express the limiting distribution of $T_n$ as the sum of three components:
 a `quadratic component', which is a mixture driven by a bivariate Poisson stochastic integral; a `linear component' which is a Poisson mixture, where the mean of the mixture is itself a univariate Poisson stochastic integral; and an independent Poisson component (Theorem \ref{thm:poisson_quadratic}). Moreover, any distributional limit of $T_n$ must belong to the closure of the class defined by the above decomposition (Theorem \ref{thm:distributional_limit}). This general result has several interesting consequences, such as a characterization theorem for dense matrices (Corollary \ref{cor:poisson_quadratic_W}), a second moment phenomenon for Poisson convergence (Corollary \ref{cor:trunc}), and a universality phenomenon which allows us to replace the Bernoulli distribution with other discrete distributions (Corollary \ref{cor:quadratic_general}). In Section \ref{sec:examples} we use these results to compute the limit of $T_n$ in various natural examples.

\subsection{Limiting Distribution of Bernoulli Quadratic Forms}

Hereafter, without loss of generality, we adopt the language of graph theory, and think of the matrix $((a_{uv}))_{1 \leq u, v \leq n}$ as the adjacency matrix of an undirected simple graph on $n$ vertices. To this end, let $\sG_n$ denote the space of all simple undirected graphs on $n$ vertices labeled by $[n] := \{1, 2, \ldots, n\}$. Given a graph $G_n \in \sG_n$ with adjacency matrix $A(G_n) = ((a_{uv}(G_n)))_{1 \leq u, v \leq n}$, denote by $V(G_n)$ the set of vertices, and by $E(G_n)$ the set of edges of $G_n$, respectively.  Then the  Bernoulli quadratic form \eqref{eq:T0} (indexed by the graph $G_n$) becomes 
\begin{align}\label{eq:TGn}
T_n =\frac{1}{2} \sum_{1 \leq u, v \leq n} a_{u v}(G_n)X_u X_v=\frac{1}{2}\bm X' A(G_n) \bm X,
\end{align}
where $X_1,X_2,\ldots, X_n$ are i.i.d. $\dBer(p_n)$ and $\bm X=(X_1,X_2,\ldots, X_n)'$. The sparse regime \eqref{eq:sparse} translates to $0< p_n \ll 1$ such that\footnote{For two non-negative sequences $(a_n)_{n\geq 1}$ and $(b_n)_{n \geq 1}$, $a_n = \Theta(b_n)$ means that there exist positive constants $C_1, C_2$,
such that $C_1 b_n \leq a_n \leq C_2b_n$, for all $n$ large enough.} 
\begin{align}\label{eq:ETn}
\E[T_n]=|E(G_n)| p_n^2=\Theta(1).
\end{align}
(Note that if $\E[T_n]=o(1)$, then $T_n \pto 0$, hence, to obtain non-degenerate limiting distributions it suffices to consider the case $\E[T_n]=\Theta(1)$.) 

\begin{remark}
The statistic \eqref{eq:TGn}  arises naturally in several contexts, such as non-parametric two-sample tests \cite{fr}, understanding coincidences \cite{diaconismosteller}, and motif frequency estimation in large networks \cite{kw1}. For instance, in the study of coincidences $T_n$ arises as a generalization of the birthday paradox \cite{birthdayexchangeability,dasguptasurvey,diaconisholmes}, where the matrix $((a_{uv}))_{1 \leq u, v \leq n}$ corresponds to the adjacency matrix of a friendship-network graph $G_n$, and one wishes to estimate the probability that there are two friends with birthday on a particular day (say January 31). Then taking $X_1, X_2, \ldots, X_n$ i.i.d. $\dBer(1/365)$ (assuming birthdays are uniformly distributed over the year), $T_n$ counts the number of pairs of friends with birthdays on January 31. This statistic also arises in the problem of estimating frequencies of motifs (small subgraphs) in large graphs \cite{kw1,subgraph_science}. Here, given a large graph $G_n$, the goal is to efficiently estimate (without
storing or searching over the entire graph) global characteristics, such as, the number of edges of $G_n$, by making local queries on $G_n$. In the subgraph sampling model \cite{kw1,degree_distribution}, where one has access to the random induced subgraph obtained by sampling each vertex of $G_n$ independently with probability $p_n$, the statistic $T_n/p_n^2$, by \eqref{eq:ETn},  is an unbiased estimate of the number of edges in $G_n$. 
\end{remark}

Hereafter, we denote $r_n=1/p_n$, and assume that the vertices of $G_n$ are labelled in the non-increasing order of the degrees $d_{1} \geq d_{2} \geq \ldots \geq d_{n}$, where $d_v$ denotes the degree of the vertex labelled $v$. To describe the limiting distribution of $T_n$ we need to consider limits of the sequence of matrices $((a_{uv}))_{1 \leq u, v \leq n}$. This can be done using the framework of graph limit theory \cite{graph_limits_I,graph_limits_II,lovasz_book}. To this end, let $\cW$ be the space of all symmetric measurable functions from $[0, \infty)^2 \rightarrow [0, 1]$. 
Given a graph $G_n$ (and a sequence $r_n \rightarrow \infty$), define the function $W_{G_n} \in \cW$ as follows: 
\begin{align}\label{eq:WGn}
W_{G_n}(x, y):= 
\left\{
\begin{array}{cc}
\bm  1\{(\ceil{xr_n}, \ceil{yr_n}) \in E(G_n)\}  & \text{for }  x, y \in \left[0, \frac{n}{r_n}\right]^2 \\
0  &      \text{otherwise.}
\end{array}
\right.
\end{align}
Moreover, for a graph $G_n$, define the {\it normalized degree-function} as $d_{W_{G_n}}(x)=\int_0^\infty W_{G_n}(x, y) \mathrm dy$. Note that 
\begin{align}\label{eq:normalized_degree}
d_{W_{G_n}}(x):= 
\left\{
\begin{array}{cc}
\frac{1}{r_n} \sum_{j=1}^n a_{\ceil{xr_n} j}(G_n)& \text{for }  x \in \left[0, \frac{n}{r_n}\right] \\
0  &      \text{otherwise.}
\end{array}
\right.
\end{align} 

\begin{defn}\label{defn:Wconvergence}\cite{lovasz_book}
For $K > 0$, the {\it cut-distance} between two functions $W_1, W_2 \in \cW$, restricted to the domain $[0, K]^2$, is defined as,
\begin{align}\label{eq:Wconvergence}
||W_1-W_2||_{\square([0, K]^2)}:=\sup_{f, g: [0, K] \rightarrow [-1, 1]}\left|\int_{[0, K]^2} \left(W_1(x, y)-W_2(x, y)\right) f(x) g(y) \mathrm dx \mathrm dy \right|. 
\end{align} 
The {\it cut-metric} between two functions $W_1, W_2 \in \cW$, restricted to the domain $[0, K]^2$, is defined as,  
\begin{align}\label{eq:Wdelta}
\delta_{\square([0, K]^2)}(W_1, W_2):= \inf_{\psi}||W_1^{\psi}-W_2||_{\square([0, K]^2)}, 
\end{align} 
with the infimum taken over all measure-preserving bijections $\psi: [0, K] \rightarrow [0, K]$, and  $W_1^\psi(x, y):= W_1(\psi(x), \psi(y))$, for $x, y \in [0, K]$. 
\end{defn}

Equipped with the definitions above we can now state our main theorem. To this end, for $p \geq 1$ and a Borel set $\cK \subseteq \R^d$ denote by $L_p(\cK)$ the set of all measurable functions from $\cK \rightarrow \R$ such that $\int_{\cK} |f(\bm x)|^p \mathrm d \bm x < \infty$.

\begin{thm} Let $X_1,X_2,\ldots, X_n$ be i.i.d. $\dBer(p_n)$ and suppose $\{G_n\}_{n \geq 1}$ is a sequence of graphs such that \eqref{eq:ETn} is satisfied. Assume that the vertices of $G_n$ are labelled  $ \{1, 2, \ldots, n\}$ in non-increasing order of the degrees and the following hold: 

\begin{enumerate}[(a)]

\item $\lim_{K \rightarrow \infty} \lim_{n\rightarrow \infty} \frac{1}{2}\int_K^\infty \int_K^\infty W_{G_n}(x, y)  \mathrm dx \mathrm dy = \lambda_0$.

\item There exists a function $W \in \cW$, such that, for $K>0$ large enough,  
\begin{align}\label{eq:WK_condition_I}
\lim_{n \rightarrow \infty}||W_{G_n}-W||_{\square([0, K]^2)} = 0. 
\end{align}

\item There exists a function $d:[0, \infty) \rightarrow [0, \infty)$ in $L_1([0,\infty))$, such that, for $K, M  > 0$ large enough, 
\begin{align}\label{eq:dK_condition_II}
\lim_{n \rightarrow \infty} \int_0^K \left|d_{W_{G_n}} (x) \bm 1\{ d_{W_{G_n}} (x)  \leq M \} - d(x) \bm 1\{d(x) \leq M \}\right| \mathrm dx = 0.
\end{align}
\end{enumerate}
Then 
\begin{align}\label{eq:Q1Q2Q3}
T_n:=\frac{1}{2} \sum_{1 \leq u, v \leq n} a_{u v}(G_n)X_u X_v \dto Q_1+Q_2+Q_3,
\end{align} 
where
\begin{itemize}

\item[--] $Q_3 \sim \dPois(\lambda_0)$ and $Q_3$ is independent of  $(Q_1, Q_2)$. 

\item[--] The joint moment generating function of $(Q_1, Q_2)$ is given by: For $t_1, t_2 \geq 0$, 
\begin{align}\label{eq:Q1Q2}
\E \exp&\Big\{-t_1 Q_1-t_2 Q_2\Big\}\nonumber \\
&=\E \exp\left\{\frac{1}{2}\int_0^\infty \int_0^\infty \phi_{W, t_1}(x, y) \mathrm d N(x)\mathrm dN(y)-(1-e^{-t_2}) \int_0^\infty \Delta(x)\mathrm d N(x)\right\},
\end{align}
with
\begin{itemize}
\item $\int_{[0,\infty)^2}W(x,y)\mathrm dx \mathrm dy<\infty$, 

\item $\Delta(x):=d(x)-\int_0^\infty W(x, y) \mathrm dy$,  

\item $\{N(t), t \geq 0\}$ is a homogenous Poisson process of rate $1$,\footnote{Given a function $f\in L_1([0,\infty)^d)$, $\int f(x_1, x_2, \ldots, x_d) \prod_{a=1}^d \mathrm d N(x_a)$, denotes the multiple It\^{o} stochastic integral of $f$ with respect to the homogeneous Poisson process of rate 1, $\{N(t), t \geq 0\}$. The precise definition of stochastic integration with respect to a Poisson process and methods for computing them are given in Appendix \ref{sec:integral}.} and 

\item $\phi_{W, t_1}(x, y):=\log(1-W(x, y)+W(x, y)e^{-t_1})$.
\end{itemize}
\end{itemize} 
\label{thm:poisson_quadratic}
\end{thm}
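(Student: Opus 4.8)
The plan is to compute the limiting moment generating function (equivalently, the Laplace transform) of $T_n$ directly and show it matches the right-hand side of \eqref{eq:Q1Q2} times $e^{-\lambda_0(1-e^{-t})}$. Fix $t \geq 0$. The key idea is to decompose the vertex set $[n]$ into three groups according to degree: a ``heavy'' set $H_K = \{v : v \leq K r_n\}$ of high-degree vertices (those indexed by $x \in [0,K]$ in the graphon picture), and its complement, which we further split using the degree function $d_{W_{G_n}}$ into a ``moderate'' part where $d_{W_{G_n}}$ is bounded and a ``light'' remainder. Condition (a) controls the edges entirely within the light part: the number of such edges, scaled by $p_n^2$, tends to $\lambda_0$, and since $X$'s are rare and nearly independent there, a Stein/Chen--Stein Poisson approximation argument (as alluded to in the introduction) shows the contribution of these edges converges to $\dPois(\lambda_0)$, asymptotically independent of everything else because it involves a disjoint, asymptotically negligible set of vertices. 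This produces $Q_3$.

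For the remaining two pieces I would condition on the ``indicator'' variables $X_v$ for $v \in H_K$ — there are only $Kr_n = K/p_n$ such vertices, and $\sum_{v \in H_K} X_v$ is $O_P(1)$, so with high probability only finitely many of them are nonzero; in the limit $r_n \sum_{v \leq Kr_n} \delta_{v/r_n} X_v$ converges to a Poisson process $N$ on $[0,K]$ of rate $1$ (this is the standard Poisson convergence of a sparse Bernoulli point process). Given the set $S$ of ``active'' heavy vertices, $T_n$ splits (up to the $Q_3$ term and lower-order cross terms) into: (i) edges with both endpoints in $S$, contributing $\frac12\sum_{u,v \in S} a_{uv}X_uX_v = \frac12 |E(G_n[S])|$ once we note $X_u = 1$ on $S$ — by condition (b) and the definition of the cut-metric, $p_n^2 \cdot (\text{number of edges of } G_n \text{ between the cells indexed by } x,y)$ behaves like $\frac12\int\int \mathbf 1\{\cdots\} W(x,y)\,\mathrm dx\,\mathrm dy$ evaluated at the atoms of $N$, and each such edge independently survives the Bernoulli thinning; carrying out the conditional Laplace transform of a sum of independent Binomials with success probability $p_n$ and count $\sim$ (edge count) gives exactly the factor $\exp\{\frac12\int\int \phi_{W,t}(x,y)\,\mathrm dN(x)\,\mathrm dN(y)\}$, because $\log \mathbb E e^{-t\,\dBin(m,p_n)} = m\log(1-p_n+p_n e^{-t}) \approx -m p_n (1-e^{-t})$ and the edge counts scale like $p_n^{-2}W$; and (ii) edges with exactly one endpoint in $S$, which for each active $u \in S$ contribute $\dBin(\text{deg}_{\text{to non-}S}(u), p_n) \to \dPois$ of a mean that is $p_n \times$(that degree). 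The degree of an active vertex $x$ into the non-heavy part is $r_n d_{W_{G_n}}(x)$ minus the part going into $H_K$, i.e.\ in the limit $d(x) - \int_0^\infty W(x,y)\,\mathrm dy = \Delta(x)$ by condition (c); summing the independent Poisson means over active vertices yields the $-(1-e^{-t})\int_0^\infty \Delta(x)\,\mathrm dN(x)$ term. Setting $t_1 = t_2 = t$ recovers $Q_1 + Q_2$, and the stated joint m.g.f.\ with separate $t_1, t_2$ follows because the ``both endpoints active'' and ``one endpoint active'' contributions are conditionally independent given $N$.

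The main obstacle — and the step requiring the most care — is making the above decomposition rigorous \emph{uniformly} as $K \to \infty$: one must show that (1) edges with both endpoints in the ``moderate but not heavy'' region and edges with one endpoint there contribute negligibly beyond what $\Delta$ and $\lambda_0$ already capture, i.e.\ the truncation at level $M$ in \eqref{eq:dK_condition_II} and the double limit in (a) genuinely exhaust the mass; (2) the cross terms between the three groups vanish; and (3) the interchange of the $n \to \infty$ and $K \to \infty$ limits is justified, which needs a tightness/uniform-integrability estimate — controlling $\mathbb E[T_n \mathbf 1\{\text{an active heavy vertex}\}]$ and second-moment bounds on the discarded edge sets. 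Here the hypothesis $\mathbb E[T_n] = \Theta(1)$ is essential: it bounds the total edge mass and forces $\int\int W < \infty$ and $\Delta \in L_1$, so the stochastic integrals in \eqref{eq:Q1Q2} converge. I would handle the limit interchange by a diagonal argument, first proving convergence for each fixed $K$ to an expression $\Phi_K(t)$, then showing $\Phi_K(t) \to \Phi_\infty(t)$ with an error bounded by the tail edge-mass, and finally invoking a standard ``$\eps$-$\delta$'' lemma for double limits of Laplace transforms together with tightness of $\{T_n\}$ to conclude convergence in distribution to the claimed limit. The actual computation of the conditional Laplace transforms is routine once the combinatorial bookkeeping of which edges fall into which group is set up, and the identification of the stochastic-integral expressions uses the moment formulas for Poisson integrals recalled in Appendix \ref{sec:integral}.
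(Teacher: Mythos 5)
Your overall decomposition strategy — sort vertices by degree, split the edge set according to whether the endpoints are heavy or light, exploit the Poisson-process limit of the rescaled active heavy vertices, and read off the joint Laplace transform by conditioning — is the same skeleton the paper uses, and the heuristic computation does land on the right formula. However, two of the steps you gloss over are precisely where the real work lies, and as written they contain genuine gaps.

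The first is the derivation of the $Q_1$ factor. You write that ``each such edge independently survives the Bernoulli thinning'' and then compute the conditional Laplace transform of a product of independent Bernoullis with success probability roughly $W$. But $G_n$ is a \emph{deterministic} graph: given the set $S$ of active heavy vertices, $|E(G_n[S])|$ is a fixed integer, not a thinned random quantity, and between any two active atoms there is exactly zero or one edge — a $\{0,1\}$ number, not something whose Laplace transform is $1-W+We^{-t}$. What is actually needed is that, conditionally on the atoms $\{x_1,x_2,\ldots\}$ of the limiting Poisson process, the deterministic adjacency pattern $(W_{G_n}(x_i,x_j))_{i<j}$ converges \emph{in distribution} (over the randomness of the atom locations) to an independent array of $\dBer(W(x_i,x_j))$'s; this is the graph-limit fact that cut-metric convergence is equivalent to convergence of sampled subgraph distributions, and it does not follow from the loose scaling argument in your sketch. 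The paper avoids invoking it by first proving convergence of all mixed moments (Lemma \ref{lm:WH}) and then explicitly identifying the limit by replacing $A(G_{n,K}^+)$ with an inhomogeneous random graph $\cG(\lceil Kr_n\rceil, W^{(L)}_{K,M})$ with the same limiting moments (Section \ref{sec:pfquadratic5}); for the random graph the product-of-Bernoullis factorization is literally correct, and the Stieltjes moment condition transfers the identification back to the deterministic graph.

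The second gap is the claimed conditional independence of the three pieces. The heavy–light sum $\sum_{u\in S,\,v \text{ light}} a_{uv}X_v$ and the light–light sum $\sum_{v',v'' \text{ light}} a_{v'v''}X_{v'}X_{v''}$ share the light indicators $X_v$, so at finite $n$ they are emphatically not conditionally independent given $N$, and the joint Laplace transform does not factor. You need a quantitative estimate showing that configurations in which a light vertex is shared between a heavy–light edge and a light–light edge have vanishing contribution; the paper establishes this as a moment-factorization lemma (Lemma \ref{lm:independent_moment}), keyed to the estimate $d_{\lceil Kr_n\rceil+1}/r_n\to 0$ of Observation \ref{obs:label}. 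A related imprecision: there is no separate ``moderate'' class in the complement of $H_K$. By the degree ordering, everything outside $H_K$ already has degree $O(r_n/K)$; the $M$-truncation only discards a few very high-degree vertices \emph{inside} $H_K$, and its real role is to make the mixed moments of $T_{n,K,M}^+$ and the linear piece uniformly bounded (needed both for the Stieltjes argument and for the tightness you allude to). Without pinning these two steps down, the ``$\varepsilon$--$\delta$'' limit interchange you describe in your final paragraph cannot be carried out.
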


The proof of this result is given in Section \ref{sec:pf_quadratic}. The proof proceeds by decomposing the graph into three parts (based on the degree of the vertices), and a truncated moment-comparison argument, which shows that the moments of a truncated version of $T_n$ are close to the moments of another `approximating' variable, for which the asymptotic distribution can be easily  computed. The three parts give rise to the following three components in the limiting distribution of $T_n$: 

\begin{itemize}

\item A {\it quadratic component} $Q_1$ whose moment generating function is given in terms of a bivariate stochastic integral. This is the contribution to $T_n$ from the `dense core' of the graph, that is, edges between the `high-degree' vertices (degree greater than $\frac{r_n}{K}$) of $G_n$. 

\item A {\it linear component} $Q_2$, which is the contribution to $T_n$ from the edges between the `high-degree' and `low-degree' vertices (degree less than $\frac{r_n}{K}$) of $G_n$. Note that the marginal moment generating function of $Q_2$ is 
\begin{align}\label{eq:Q2}
\E \exp \Big\{-t_2 Q_2\Big\} &=\E \exp\left\{-(1-e^{-t_2}) \int_0^\infty \Delta(x)\mathrm d N(x)\right\}. 
\end{align}
By comparing moment generating functions, it is easy to see that $Q_2 \sim \dPois(R_2)$, where $R_2=\int_0^\infty \Delta(x)\mathrm d N(x)$ is a univariate Poisson stochastic integral. This shows that marginally $Q_2$ is a Poisson mixture, where the mixing distribution is a  (possibly) infinite linear combination of independent Poisson random variables. 

\item An {\it independent Poisson component} $Q_3$, which is the contribution from the edges between the `low-degree' vertices of $G_n$.  

\end{itemize}

\begin{remark}\label{rem:poisson_condition} Even though \eqref{eq:Q1Q2} often characterizes the limit of $T_n$ (as shown in Theorem \ref{thm:distributional_limit}, Corollary \ref{cor:poisson_quadratic_W}, and Corollary \ref{cor:trunc} below), the conditions in Theorem \ref{thm:poisson_quadratic}  can be slightly relaxed in a few  ways: 

\begin{itemize} 

\item[(1)] It will be evident from the proof of Theorem \ref{thm:poisson_quadratic} that it suffices to assume \eqref{eq:WK_condition_I} holds, not for all $K$ large enough, but along any diverging sequence $K_s \rightarrow \infty$. Similarly, condition \eqref{eq:dK_condition_II} only needs to hold along diverging sequences of $K$ and $M$. In fact, we show later in Observation \ref{obs:degree_M} that an easy sufficient condition for \eqref{eq:dK_condition_II} to hold along a certain diverging sequence of $M$ is 
$$\lim_{n \rightarrow \infty }||d_{W_{G_n}}-d||_{L_1([0, K])}=0.$$
We will often use the condition above to verify \eqref{eq:dK_condition_II}.  However, the truncated condition in \eqref{eq:dK_condition_II} is, in general, necessary to include graphs with a few high-degree vertices. 

\item[(2)] Another relaxation, which will again be clear from the proof of Theorem \ref{thm:poisson_quadratic}, is to assume \eqref{eq:WK_condition_I} and \eqref{eq:dK_condition_II} hold along  a common bijection (permutation of the vertices) from $[0, K] \rightarrow [0, K]$ (see Lemma \ref{lm:moment_limit} for a precise statement). 
Marginally, this allows one to replace the cut-distance $||\cdot||_{\square([0, K]^2)}$ in 
\eqref{eq:WK_condition_I} with the cut-metric $\delta_{\square([0, K]^2)}$. This generalization will be important for establishing the necessity of the conditions and  characterizing the limits of $T_n$ (in Theorem \ref{thm:distributional_limit} and Corollary \ref{cor:poisson_quadratic_W} below). Nevertheless, to avoid notational clutter, we present Theorem \ref{thm:poisson_quadratic} under the slightly weaker condition, and discuss this generalization as part of the proof in Section \ref{sec:pfquadratic4}. 
\end{itemize}

\end{remark}

Given the above discussion, it is natural to wonder whether the conditions \eqref{eq:WK_condition_I} and \eqref{eq:dK_condition_II} are necessary for the convergence of $T_n$. More generally, one can ask what are the possible limiting distributions of $T_n$? It is easy to construct examples where $T_n$ does not converge in distribution, when the conditions of Theorem \ref{thm:poisson_quadratic} are not satisfied (see Example \ref{ex:starcomplete_II}). However, the question of determining all possible limiting distributions of $T_n$ is more delicate. In the theorem below, we answer this question by showing that whenever $T_n$ has  a distributional limit, it must belong to the closure of limits of the form \eqref{eq:Q1Q2}. To make this precise, denote by $\cF$ the collection of all functions  $d:[0, \infty) \rightarrow [0, \infty)$ in $L_1([0,\infty))$, and consider the following definition: 

\begin{defn}\label{defn:Wf} For $\cW$ and $\cF$ as above, define $\cP(\cW, \cF)$ to be the collection all probability measures $\mu$ on $\Z_{+}\cup \{0\}$, such that if $J \sim \mu$, then
$$J\stackrel{D}= J_1 + J_2,$$
where the joint moment generating function of $(J_1, J_2)$ is given by the RHS of \eqref{eq:Q1Q2}, for some function $W \in \cW$ with $\int_0^\infty \int_0^\infty W(x, y) \mathrm d x \mathrm dy < \infty$ and some function $d \in \cF$, such that $\Delta(x)=d(x)-\int_0^\infty W(x, y) \mathrm d y \geq 0$, for all $x \in [0, \infty)$.  Finally, denote by $\overline{\cP}(\cW, \cF)$ the closure of $\cP(\cW, \cF)$ under weak convergence.\footnote{More precisely,  a probability measure $\mu$ on  $\Z_{+}\cup \{0\}$ belongs to $\overline{\cP}(\cW, \cF)$ if and only if there exists a sequence of probability measures $\{\mu_s\}_{s \geq 1}$, with $\mu_s \in {\cP}(\cW, \cF)$, such that $\mu_s$ converges weakly (in distribution) to $\mu$, as $s \rightarrow \infty$.}
\end{defn}

The following theorem shows that whenever $T_n$ has a distributional limit, it has component which belongs to $\overline{\cP}(\cW, \cF)$ plus an independent Poisson random variable.

\begin{thm}\label{thm:distributional_limit} 
Suppose \eqref{eq:ETn} holds and the random variable $T_n$ converges in distribution to a  random variable $T$. Then $T\stackrel{D}=J+J_0$, where $J \in \overline{\cP}(\cW, \cF)$, $J_0 \sim \dPois(\lambda)$, for some $\lambda \geq 0$, and $J_0$ is independent of $J$. 
\end{thm}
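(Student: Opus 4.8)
The plan is to reduce the general case to Theorem \ref{thm:poisson_quadratic} by a subsequence-and-compactness argument, absorbing the unavoidable loss into the closure $\overline{\cP}(\cW,\cF)$. Since $T_n\dto T$, every subsequence of $\{T_n\}$ converges to the same $T$, so it suffices to produce, along \emph{some} subsequence, a triple $(W,d,\lambda_0)$ with $W\in\cW$, $\int_{[0,\infty)^2}W<\infty$, $d\in\cF$, and $\lambda_0\ge0$ for which the hypotheses of Theorem \ref{thm:poisson_quadratic} hold along that subsequence, in the relaxed form permitted by Remark \ref{rem:poisson_condition}. The theorem then gives $T_n\dto Q_1+Q_2+Q_3$, hence $T\stackrel{D}{=}(Q_1+Q_2)+Q_3$ with $Q_3\sim\dPois(\lambda_0)$ independent and the joint moment generating function of $(Q_1,Q_2)$ as in \eqref{eq:Q1Q2}; taking $J:=Q_1+Q_2$ and $J_0:=Q_3$ then gives the asserted decomposition, once one checks $J\in\overline{\cP}(\cW,\cF)$. (The $\dPois(\lambda_0)$ summand is displayed separately because a deterministic mean cannot in general be folded into the Poisson stochastic-integral form of \eqref{eq:Q1Q2}.)

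For the extraction, keep the vertices of each $G_n$ ordered by non-increasing degree and record three uniform bounds coming from \eqref{eq:ETn}: $\tfrac12\int_{[0,\infty)^2}W_{G_n}=\E[T_n]=\Theta(1)$; the degree-function $d_{W_{G_n}}$ is a non-increasing step function with $\int_0^\infty d_{W_{G_n}}=2\E[T_n]=\Theta(1)$; and $0\le\tfrac12\int_{[K,\infty)^2}W_{G_n}\le\E[T_n]$ for each $K$. Applying the compactness of graphons in the cut-metric \cite{lovasz_book} to $[0,K]^2$ for each integer $K$, diagonalizing in $K$, and (modulo a consistency issue discussed at the end) passing to consistent representatives on the nested squares, I extract a subsequence along which $\delta_{\square([0,K]^2)}(W_{G_n},W)\to0$ for every $K$, for a single $W\in\cW$; the first bound forces $\int_{[0,\infty)^2}W<\infty$. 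Passing to a further subsequence, Helly's selection theorem applied to the monotone functions $d_{W_{G_n}}$ produces a non-increasing limit $d$, with $d\in\cF$ by the second bound. Finally, by the third bound and a further diagonalization, $\lim_n\tfrac12\int_{[K,\infty)^2}W_{G_n}$ exists for each integer $K$, is non-increasing in $K$, and hence converges to some $\lambda_0\ge0$; this is hypothesis (a).

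Next I would verify hypotheses (b) and (c) for $(W,d,\lambda_0)$: (b) is the cut-metric convergence just obtained, and (c) follows from the monotone (Helly) convergence of $d_{W_{G_n}}$ together with dominated convergence on the bounded interval $[0,K]$ after truncation at level $M$, along a suitable diverging sequence of $M$. Granting (a)--(c) along a common sequence of rearrangements (the delicate point, see below), Theorem \ref{thm:poisson_quadratic} gives $T\stackrel{D}{=}(Q_1+Q_2)+Q_3$. To place $Q_1+Q_2$ in $\overline{\cP}(\cW,\cF)$ I would use truncation: by \eqref{eq:Q1Q2} the moment generating function of $(Q_1,Q_2)$ has the form in Definition \ref{defn:Wf} with data $(W,d)$, and the truncations $W^{(K)}:=W\,\mathbf{1}_{[0,K]^2}$ satisfy $\int_0^\infty W^{(K)}(x,y)\,\mathrm dy\le d(x)$ for a.e.\ $x$ (the cut-metric limit of the dense core never carries more degree than the Helly limit of the full degree), so each $(W^{(K)},d)$ is admissible in Definition \ref{defn:Wf} and gives $\mu_K\in\cP(\cW,\cF)$ with moment generating function \eqref{eq:Q1Q2} evaluated at $W^{(K)}$; as $K\to\infty$ these converge to the moment generating function of $Q_1+Q_2$ by dominated convergence inside the Poisson stochastic integrals (using $\int_{[0,\infty)^2}W<\infty$), so $\mu_K$ converges weakly to the law of $Q_1+Q_2$, which therefore lies in $\overline{\cP}(\cW,\cF)$.

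The main obstacle is the alignment of the measure-preserving rearrangements. Cut-metric convergence on each square $[0,K]^2$ only controls $W_{G_n}$ up to a \emph{separate} rearrangement of $[0,K]$, whereas the strengthened form of Theorem \ref{thm:poisson_quadratic} from Remark \ref{rem:poisson_condition}(2) (made precise in Lemma \ref{lm:moment_limit}) requires \eqref{eq:WK_condition_I} and \eqref{eq:dK_condition_II} to hold along one common sequence of rearrangements that is, in addition, compatible with the degree ordering defining $d$. Establishing that the nested cut-metric limits admit a single graphon representative whose degree function is the Helly limit $d$ --- equivalently, that the near-optimal rearrangements realizing the cut-metric can be taken degree-monotone --- and verifying that this keeps $\Delta=d-\int_0^\infty W(\cdot,y)\,\mathrm dy$ nonnegative, is the crux; because this reconciliation is available only in a limiting sense, the conclusion is stated with the closure $\overline{\cP}(\cW,\cF)$ rather than with $\cP(\cW,\cF)$.
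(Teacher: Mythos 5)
Your plan correctly identifies the broad strategy (subsequence extraction followed by an application of Theorem~\ref{thm:poisson_quadratic} in the relaxed form from Remark~\ref{rem:poisson_condition}(2)), and you correctly identify the alignment of measure-preserving rearrangements as the crux. However, you do not resolve it, and the way you describe it being resolved --- ``this reconciliation is available only in a limiting sense, [hence] the conclusion is stated with the closure'' --- mischaracterizes what actually happens, so there are two genuine gaps.

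First, your approach extracts a \emph{single} global pair $(W,d)$ by diagonalizing the cut-metric compactness over nested squares and applying Helly's theorem to the monotone degree functions. The problem is exactly the one you flag: the near-optimal rearrangements $\phi_{n,K}$ achieving the cut-metric limit of $W_{G_n}$ on $[0,K]^2$ have no reason to coincide with the identity (the degree-monotone ordering under which $d_{W_{G_n}}$ converges via Helly), nor to be consistent across different $K$. Without this compatibility, the Helly limit $d$ is simply not the degree function in the coordinate system where $W$ was obtained, and both the verification of \eqref{eq:dK_condition_II} and the nonnegativity of $\Delta=d-\int W(\cdot,y)\,\mathrm dy$ break down. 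You cannot invoke Lemma~\ref{lm:moment_limit}, because its hypotheses require \eqref{eq:W_phi_K} and \eqref{eq:L1_M} to hold along \emph{one common} sequence of rearrangements. The paper closes this gap with Proposition~\ref{ppn:Wnfn}, a joint sequential compactness statement: given sequences $W_n\in\cW_K$ and $f_n:[0,K]\to[0,M]$, there is a subsequence and a single sequence of bijections under which $W_n$ converges in cut-distance \emph{and} $f_n$ converges in $L_1$, to a common pair $(W_K,f_K)$. The proof of this proposition (a simultaneous partition refinement combining the weak regularity lemma for $W_n$ with the analogous block-averaging for $f_n$, plus a martingale argument in the refinement parameter $L$) is the key technical ingredient your outline is missing; Helly's theorem is not a substitute for it, precisely because Helly's theorem has no degrees of freedom in the rearrangement.

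Second, and relatedly, the paper does \emph{not} actually produce a single global $(W,d)$, so your parenthetical consistency worry (``modulo a consistency issue \ldots passing to consistent representatives on the nested squares'') is a separate unsolved issue in your outline. What the paper does instead is apply Proposition~\ref{ppn:Wnfn} for each fixed $(K,M)$ to get a \emph{$K$- and $M$-dependent} pair $(W_{K,M},d_{K,M})$ and a member $J_{1,K,M}+J_{2,K,M}$ of $\cP(\cW,\cF)$. It then passes to a subsequence $K_j\to\infty$ along which these converge (tightness follows from the uniform moment bound of Lemma~\ref{lm:TZ}) to some $J_M$, and since $J_M$ is a weak limit of elements of $\cP(\cW,\cF)$ it lies in $\overline{\cP}(\cW,\cF)$; a final limit $M\to\infty$ keeps $J$ in $\overline{\cP}(\cW,\cF)$ since the closure is weakly closed. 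So the closure is not compensating for an unresolved rearrangement mismatch --- the rearrangement is fully resolved at each fixed $(K,M)$ by Proposition~\ref{ppn:Wnfn} --- it is needed because the limiting data are only obtained along subsequences of $K$ and $M$, which need not cohere into a single pair $(W,d)$. This is a cleaner and genuinely different route from the one you sketched, and it entirely avoids the need to construct a global degree-monotone representative, which your proposal implicitly requires but does not deliver.
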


The proof of the above theorem is given in Section \ref{sec:pf_distributional_limit}. We compute the limit of $T_n$ in different examples in Section \ref{sec:examples}. Interestingly, in all the examples constructed in Section \ref{sec:pf_distributional_limit} the limiting distribution of $T_n$ belongs to the class ${\cP}(\cW, \cF)$ itself. This leaves open the intriguing question of  whether there are distributional limits of $T_n$ which are in $\overline{\cP}(\cW, \cF)$ but not in ${\cP}(\cW, \cF)$.

\subsection{Consequences of Theorem \ref{thm:poisson_quadratic}}

The limiting distribution in Theorem \ref{thm:poisson_quadratic} simplifies if the graph sequence $\{G_n\}_{n \geq 1}$ has some special structures. 

We begin with the case when the graph is dense. Recall a sequence of graphs $\{G_n\}_{n \geq 1}$ is said to be {\it dense}, if $|E(G_n)| \geq C n^2$, for some constant $C>0$, when $n$ is large enough. In this case, the assumption \eqref{eq:WK_condition_I} characterizes all limits of $T_n$. Here, the linear mixture and the Poisson components vanish, and the limit of $T_n$ is determined  by the quadratic component. 

\begin{cor}[Dense Graphs] Let $X_1,X_2,\ldots, X_n$ be i.i.d. $\dBer(p_n)$ and suppose $\{G_n\}_{n \geq 1}$ is a sequence of dense graphs such that \eqref{eq:ETn} holds. 

\begin{enumerate}
\item[(a)] Suppose there exists a function $W \in \cW$, such that, for $K>0$ large enough,  
$\lim_{n \rightarrow \infty}||W_{G_n}-W||_{\square([0, K]^2)} = 0$.  Then $W$ vanishes outside a compact rectangle $[0,a]^2$ for some finite $a\ge 0$, and $T_n \dto Q_1$, where 
\begin{align}\label{eq:Q1W}
\E \exp&\Big\{-t_1 Q_1\Big\}=\E \exp\left\{\frac{1}{2}\int_0^a \int_0^a \phi_{W, t_1}(x, y) \mathrm d N(x)\mathrm dN(y) \right\},
\end{align}
with $t_1\geq 0$, $\phi_{W, t_1}(x, y):=\log(1-W(x, y)+W(x, y)e^{-t_1})$, and $\{N(t), t \geq 0\}$ is a homogenous Poisson process of rate $1$. 

\item[(b)]
Conversely, suppose $\{G_n\}_{n \geq 1}$ is a sequence of dense graphs such that \eqref{eq:ETn} holds, and $T_n$ converges in distribution. Then the limit is necessarily of the form  \eqref{eq:Q1W}, for some function $W \in \cW$ which vanishes outside $[0,a]^2$ for some finite $a\ge 0$.
\end{enumerate}
\label{cor:poisson_quadratic_W}
\end{cor}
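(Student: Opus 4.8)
\textbf{Proof proposal for Corollary \ref{cor:poisson_quadratic_W}.}

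The plan is to deduce both parts from Theorem \ref{thm:poisson_quadratic} and Theorem \ref{thm:distributional_limit} respectively, the main work being to show that in the dense regime the linear and Poisson components $Q_2, Q_3$ vanish and that the limiting graphon $W$ is supported on a compact rectangle. I would begin with part (a). Since $\{G_n\}_{n\ge 1}$ is dense, $|E(G_n)| \ge C n^2$, and \eqref{eq:ETn} forces $p_n^2 |E(G_n)| = \Theta(1)$; hence $r_n = 1/p_n = \Theta(\sqrt{|E(G_n)|}) = \Theta(n)$, so $n/r_n = \Theta(1)$. Thus $W_{G_n}$ is supported inside $[0, n/r_n]^2 \subseteq [0, a']^2$ for some fixed finite $a'$, uniformly in $n$. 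Passing this compact support through the cut-norm convergence \eqref{eq:WK_condition_I}: for $K$ large enough (say $K \ge a'$), $\|W_{G_n} - W\|_{\square([0,K]^2)} \to 0$ while each $W_{G_n}$ vanishes off $[0,a']^2$; I would argue (testing against $f = g = \ind_{[0,K]^2 \setminus [0,a]^2}$-type functions, using that the degrees are arranged in non-increasing order so that the support of $W$ is itself, up to a null set, a down-set) that $W$ must vanish outside $[0,a]^2$ for some finite $a \le a'$. With compact support in hand, condition (a) of Theorem \ref{thm:poisson_quadratic} gives $\lambda_0 = 0$ (the tail integral $\frac12 \int_K^\infty\int_K^\infty W_{G_n}$ is eventually zero), so $Q_3 \equiv 0$. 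For condition (c), the normalized degree function satisfies $d_{W_{G_n}}(x) = \frac{1}{r_n}\sum_j a_{\ceil{x r_n}\, j} \le n/r_n = \Theta(1)$, so it is uniformly bounded; together with \eqref{eq:WK_condition_I} this yields $d_{W_{G_n}} \to d$ in $L_1([0,K])$ with $d(x) = \int_0^\infty W(x,y)\,\mathrm dy$ (the bounded convergence / Observation \ref{obs:degree_M} route), whence $\Delta \equiv 0$ and $Q_2 \equiv 0$. Plugging $\Delta \equiv 0$ and the compact support into \eqref{eq:Q1Q2} collapses the formula to \eqref{eq:Q1W}, proving (a).

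For part (b), suppose $\{G_n\}$ is dense with \eqref{eq:ETn} and $T_n \dto T$. By Theorem \ref{thm:distributional_limit}, $T \stackrel{D}= J + J_0$ with $J \in \overline{\cP}(\cW, \cF)$ and $J_0 \sim \dPois(\lambda)$ independent. The task is to show $J_0$ is degenerate and that $J$ lies in the subfamily described by \eqref{eq:Q1W}. Here I would revisit the construction in the proof of Theorem \ref{thm:distributional_limit} in Section \ref{sec:pf_distributional_limit}: the graphons and degree functions approximating $T_n$ are built from the $W_{G_n}$, which as above are all supported in a fixed compact rectangle and have uniformly bounded degree functions; consequently the measures $\mu_s \in \cP(\cW, \cF)$ constructed there may be taken to arise from $W$'s supported in a fixed $[0, a']^2$ with $\Delta \equiv 0$, i.e. from the subfamily \eqref{eq:Q1W}, and the Poisson parameter $\lambda_0$ appearing is forced to $0$. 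Since weak limits of such measures are again of this form — the exponent in \eqref{eq:Q1W} only involves the compactly-supported integral $\frac12 \int_0^a\int_0^a \phi_{W, t_1}\, \mathrm dN\, \mathrm dN$, and the relevant subfamily is closed under weak convergence by the same compactness argument used for $\overline{\cP}(\cW,\cF)$ — we conclude $T$ itself is of the form \eqref{eq:Q1W} for some $W$ vanishing off $[0,a]^2$, and no separate independent Poisson term survives.

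The main obstacle I anticipate is part (b): one must be careful that the closure operation in Definition \ref{defn:Wf} does not enlarge the dense subfamily beyond \eqref{eq:Q1W} — in particular, that a weak limit of compactly-supported quadratic-integral laws cannot spontaneously generate a linear mixture component or an extra independent Poisson piece. This requires either a tightness/compactness argument on the sequence of supporting rectangles (they are uniformly bounded, so this is available) or a direct verification that the functional in \eqref{eq:Q1W} is weakly continuous in $W$ along the relevant sequences. A secondary technical point, used in both parts, is the claim that $W$ (equivalently its support) inherits the monotone/down-set structure from the non-increasing degree ordering of the vertices of $G_n$; this is what legitimizes writing the support as a rectangle $[0,a]^2$ rather than an arbitrary measurable set of finite measure, and it should follow from the ordering convention together with the fact that cut-norm convergence is compatible with the (appropriately normalized) degree ordering.
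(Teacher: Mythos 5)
Your outline for part (a) is in the right spirit, but two links are weaker than they need to be, and the paper handles both more directly. First, the claim that the support of $W$ is ``up to a null set, a down-set'' is neither true nor needed: the rectangle $[0,a]^2$ comes solely from the fact that denseness forces $r_n > Cn$, so $W_{G_n}$ \emph{identically} vanishes outside $[0,1/C]^2$ for all $n$ by definition \eqref{eq:WGn}. The paper then shows $W$ vanishes off $[0,a]^2$ simply by testing the cut-norm against indicator functions of the two strips $[a,a+L]\times[0,a]$ and $[0,a]\times[a,a+L]$, and then $[a,\infty)^2$, and using $W\geq 0$; no structural property of the support is invoked. Second, your justification of the $L_1$ convergence $d_{W_{G_n}}\to d_W$ is not a proof. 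Uniform boundedness plus cut-norm convergence does not feed into the ``bounded convergence'' theorem (which needs pointwise convergence), and Observation \ref{obs:degree_M} takes $L_1$ convergence of $d_{W_{G_n}}$ as \emph{input} — it does not supply it. The paper's actual argument is to show $\int_0^K(d_{W_{G_n}}-d_W)^2\to 0$ by expanding the square and using the cut-norm convergence (with test functions $d_{W_{G_n}}$ and $d_W$, which are uniformly bounded in the dense regime), then Cauchy--Schwarz to get $L_1$. You need to spell out that $L_2$ step.

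For part (b) you propose a genuinely different route, via Theorem \ref{thm:distributional_limit}, and you yourself flag the gap: you would need to prove that the closure of the compactly-supported, $\Delta\equiv 0$ subfamily of $\cP(\cW,\cF)$ stays inside that subfamily and that no independent Poisson piece can appear in the limit. That closure statement is essentially the whole content of part (b) and you do not supply it. The paper avoids the issue entirely and never invokes Theorem \ref{thm:distributional_limit} here: it uses compactness of the cut-metric $\delta_{\square([0,a]^2)}$ on $\cW_a$ (\cite[Proposition 3.6]{graph_limits_I}) to extract a subsequence $\{n_j\}$ and measure-preserving bijections $\{\phi_{n_j}\}$ with $\|W_{G_{n_j}}^{\phi_{n_j}}-W\|_{\square([0,a]^2)}\to 0$, then applies part (a) along that subsequence (legitimized by the bijection generalization of Remark \ref{rem:poisson_condition}(2) and Lemma \ref{lm:moment_limit}), and finally uses the assumed convergence of $T_n$ to upgrade from the subsequence to the whole sequence. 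That is much shorter, because it replaces the unproved closure claim about a family of probability laws with the known compactness of graphons on a fixed rectangle. If you want to rescue your route you would in effect have to reprove that compactness anyway; you are better off using it directly.
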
 

The proof of Corollary \ref{cor:poisson_quadratic_W} is given in Section \ref{sec:pfcorollary}. In Section \ref{sec:examples}, we compute the limit in \eqref{eq:Q1W} in various examples.  

Another consequence of Theorem \ref{thm:poisson_quadratic}, is a characterization of when the limiting distribution of $T_n$ is a Poisson random variable. This reveals an interesting truncated {\it second moment phenomenon},  that is, the convergence of the first two moments of a truncated version of $T_n$ determines the convergence in distribution to a Poisson distribution. To this end, for any $M > 0$, define $X_{u,M}:=X_u \bm 1\{d_u\le M r_n\}$ and 
\begin{align}\label{eq:M}
T_{n, M}= \sum_{(u,v) \in E(G_{n})} X_{u,M}X_{v,M}.
\end{align} 

\begin{cor}[Truncated Second Moment Phenomenon for Poisson Approximation]
 Let $X_1,$ $X_2, \ldots, X_n$ be i.i.d. $\dBer(p_n)$ and suppose $\{G_n\}_{n \geq 1}$ is a sequence of graphs such that \eqref{eq:ETn} holds. Then the following are equivalent.
 \begin{enumerate}
 \item[(a)]
 $T_n\dto \dPois(\lambda)$.
 
\item[(b)] $\lim_{M\rightarrow\infty}\lim_{n\rightarrow\infty}\E T_{n,M}=\lambda$ and $\lim_{M\rightarrow\infty}\lim_{n\rightarrow\infty} \Var(T_{n,M})=\lambda$.\footnote{
For a doubly indexed sequence of real numbers $\{a_{n, m}\}_{n, m \geq 1}$, the double limit $\lim_{m \rightarrow\infty}\lim_{n\rightarrow\infty} a_{n,m}=a$, means $\limsup_{m \rightarrow\infty}\limsup_{n\rightarrow\infty} a_{n,m}= \liminf_{m \rightarrow\infty}\liminf_{n\rightarrow\infty} a_{n,m}=a$.} 
 
\item[(c)] The assumptions of Theorem \ref{thm:poisson_quadratic} hold with $W=0$, $d=0$, and $\lambda_0=\lambda$.

\end{enumerate}\label{cor:trunc}
\end{cor}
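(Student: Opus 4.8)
The plan is to prove the cycle $\mathrm{(c)}\Rightarrow\mathrm{(a)}\Rightarrow\mathrm{(b)}\Rightarrow\mathrm{(c)}$. The first implication is immediate from Theorem~\ref{thm:poisson_quadratic}: if its hypotheses hold with $W\equiv0$, $d\equiv0$, $\lambda_0=\lambda$, then $\phi_{W,t_1}\equiv\log1=0$ and $\Delta\equiv0$, so the right-hand side of \eqref{eq:Q1Q2} equals $1$ for all $t_1,t_2\ge0$; hence $Q_1=Q_2=0$ almost surely and $T_n\dto Q_3\sim\dPois(\lambda)$.

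For $\mathrm{(a)}\Rightarrow\mathrm{(b)}$, write $T_n=T_{n,M}+R_{n,M}$ with $R_{n,M}:=\sum_{(u,v)\in E(G_n)}X_uX_v\ind\{\max(d_u,d_v)>Mr_n\}\ge0$. Two facts drive the argument. (i) The induced subgraph on $\{v:d_v\le Mr_n\}$ has maximum degree $\le Mr_n$ there, so a routine subgraph/homomorphism-counting estimate (using $|E(G_n)|p_n^2=\Theta(1)$ and $r_np_n=1$) gives $\sup_n\E T_{n,M}^k<\infty$ for each fixed $k,M$. (ii) $\{R_{n,M}\ge1\}$ forces some vertex of degree $>Mr_n$ to equal $1$, whence $\P(R_{n,M}\ge1)\le p_n|\{v:d_v>Mr_n\}|\le p_n\cdot 2|E(G_n)|/(Mr_n)\le C/M$ uniformly in $n$. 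Now for $j\in\{1,2\}$ and $k\ge1$: from $0\le T_{n,M}\le T_n$ we get $\E[\min(T_{n,M},k)^j]\le\E[\min(T_n,k)^j]$, while $\min(T_n,k)^j\ind\{R_{n,M}=0\}=\min(T_{n,M},k)^j\ind\{R_{n,M}=0\}\le T_{n,M}^j$ gives $\E[\min(T_n,k)^j]\le\E T_{n,M}^j+k^j\P(R_{n,M}\ge1)$. Combining these with the tail bound $\E[T_{n,M}^j\ind\{T_{n,M}>k\}]\le\E T_{n,M}^{j+1}/k$, with (i), (ii), and with $\E[\min(T_n,k)^j]\to\E[\min(\dPois(\lambda),k)^j]$ (a consequence of $\mathrm{(a)}$), and letting $n\to\infty$, then $M\to\infty$, then $k\to\infty$, yields $\lim_M\lim_n\E T_{n,M}=\lambda$ and $\lim_M\lim_n\E T_{n,M}^2=\lambda+\lambda^2$; subtracting (using monotonicity of $M\mapsto\E T_{n,M}$ to handle the iterated limits) gives $\lim_M\lim_n\Var(T_{n,M})=\lambda$, i.e.\ $\mathrm{(b)}$.

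For $\mathrm{(b)}\Rightarrow\mathrm{(c)}$, set $\widetilde d_v:=|N_{G_n}(v)\cap\{u:d_u\le Mr_n\}|$. Expanding the variance of the indicators $X_{u,M}X_{v,M}$ gives $\Var(T_{n,M})=\E T_{n,M}+p_n^3\sum_{v:d_v\le Mr_n}\widetilde d_v(\widetilde d_v-1)-\rho_{n,M}$ with $0\le\rho_{n,M}=O(M/r_n)$, so $\mathrm{(b)}$ forces $\lim_M\limsup_n p_n^3\sum_{v:d_v\le Mr_n}\widetilde d_v^2=0$; since $|\{u:d_u>Mr_n\}|\le Cr_n/M$, replacing $\widetilde d_v$ by $d_v$ costs only $O(1/M)$, so also $\lim_M\limsup_n p_n^3\sum_{v:d_v\le Mr_n}d_v^2=0$. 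From this I would verify the three hypotheses of Theorem~\ref{thm:poisson_quadratic} with $W\equiv0$, $d\equiv0$, $\lambda_0=\lambda$. For \eqref{eq:WK_condition_I}: Cauchy--Schwarz in \eqref{eq:Wconvergence} bounds $\|W_{G_n}\|_{\square([0,K]^2)}^2$ by $K\int_0^K\bigl(\int_0^K W_{G_n}(x,y)\,\mathrm dy\bigr)^2\mathrm dx\le Kr_n^{-3}\sum_{i\le\lceil Kr_n\rceil}\min(\lceil Kr_n\rceil,d_i)^2$, and splitting the last sum at $d_i\lessgtr Mr_n$ bounds it by $K\bigl(p_n^3\sum_{v:d_v\le Mr_n}d_v^2+CK^2/M\bigr)$, which $\to0$ on letting $n\to\infty$ then $M\to\infty$. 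For \eqref{eq:dK_condition_II}: Cauchy--Schwarz applied to $\int_0^K d_{W_{G_n}}(x)\ind\{d_{W_{G_n}}(x)\le M_0\}\,\mathrm dx=r_n^{-2}\sum_{i\le\lceil Kr_n\rceil,\,d_i\le M_0r_n}d_i$ against $r_n^{-3}\sum_{v:d_v\le Mr_n}d_v^2$ (any $M\ge M_0$) sends it to $0$. Finally, for hypothesis (a), put $T_n'':=\sum_{(u,v)\in E(G_n):\,u,v>Kr_n}X_uX_v$, so $\tfrac12\int_K^\infty\int_K^\infty W_{G_n}=p_n^2|E(G_n[\{v>Kr_n\}])|=\E T_n''$; the \eqref{eq:dK_condition_II} just proved gives $\P(T_n-T_n''\ge1)\le p_n^2\sum_{u\le Kr_n,\,d_u\le Mr_n}d_u+p_n|\{u\le Kr_n:d_u>Mr_n\}|\to0$, so $T_n-T_n''\pto0$; since the top degrees satisfy $d_{\lceil Kr_n\rceil}\le 2|E(G_n)|/(Kr_n)=O(r_n/K)$, we have $\{v>Kr_n\}\subseteq\{v:d_v\le Mr_n\}$ for $K$ large, hence $T_n''\le T_{n,M}\le T_n$ with $0\le T_{n,M}-T_n''\le T_n-T_n''\pto0$, and uniform integrability of $\{T_{n,M}\}_n$ (from (i)) gives $\E T_n''=\E T_{n,M}-o(1)$; combined with $\lim_M\lim_n\E T_{n,M}=\lambda$ from $\mathrm{(b)}$ this gives $\lim_n\E T_n''=\lambda$, i.e.\ $\lim_K\lim_n\tfrac12\int_K^\infty\int_K^\infty W_{G_n}=\lambda$. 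Thus all hypotheses of Theorem~\ref{thm:poisson_quadratic} hold with $W\equiv0$, $d\equiv0$, $\lambda_0=\lambda$, which is $\mathrm{(c)}$.

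The main obstacle is the $\mathrm{(b)}\Rightarrow\mathrm{(c)}$ step, precisely deriving \eqref{eq:WK_condition_I} and \eqref{eq:dK_condition_II} from the variance condition: the substance is the Cauchy--Schwarz reduction of the cut norm and of the truncated degree-$L_1$ norm to the single quantity $p_n^3\sum_{v:d_v\le Mr_n}d_v^2$ (which $\mathrm{(b)}$ controls), together with the careful bookkeeping of the iterated limits — each bound carries an error of order $K^2/M$ that can only be absorbed by sending $M\to\infty$ after $n\to\infty$, while the target quantity depends on $n$ alone. The auxiliary uniform moment bound $\sup_n\E T_{n,M}^k<\infty$, though standard, is used in both directions and I would isolate it as a lemma.
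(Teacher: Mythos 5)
Your proof is correct and follows the same cycle $\mathrm{(c)}\Rightarrow\mathrm{(a)}\Rightarrow\mathrm{(b)}\Rightarrow\mathrm{(c)}$, with $\mathrm{(c)}\Rightarrow\mathrm{(a)}$ identical to the paper's. For $\mathrm{(b)}\Rightarrow\mathrm{(c)}$, your argument is essentially the paper's: both reduce the variance condition to the vanishing of the truncated second-moment-of-degree quantity (you phrase it as $p_n^3\sum_{v\in V_M}d_v^2$, the paper as $\int d_{W_{G_n[V_M]}}^2$, which are the same after unpacking) and then push this through the cut-norm and truncated $L_1$-degree conditions via Cauchy--Schwarz; your verification of hypothesis (a) of Theorem~\ref{thm:poisson_quadratic} via $T_n''$ and the paper's via $\int_{[0,\infty)^2}W_{G_{n,M}}-\int_{[K,\infty)^2}W_{G_n}$ are again the same estimate dressed differently. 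Where you genuinely diverge from the paper is in $\mathrm{(a)}\Rightarrow\mathrm{(b)}$: the paper extracts a subsequence (by diagonalization and uniform moment bounds), invokes the Stieltjes moment condition from Lemma~\ref{lm:limit_M} to get convergence of $T_{n_j,M}$ in distribution and moments to some $T_M$, and then uses stochastic monotonicity of $\{T_M\}_{M\ge1}$ plus monotone convergence to pass to $\dPois(\lambda)$; you instead test $T_n\dto\dPois(\lambda)$ against the bounded continuous functions $\min(\cdot,k)^j$, sandwich $\E T_{n,M}^j$ between $\E[\min(T_n,k)^j]$ and correction terms controlled by the tail bound $\P(R_{n,M}\ge1)\lesssim1/M$ on one side and by $\E T_{n,M}^{j+1}/k$ on the other. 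Your route is more self-contained (no Stieltjes or subsequence machinery), at the cost of a mildly delicate bookkeeping of the iterated limits: the lower sandwich needs $M\to\infty$ before $k\to\infty$ while the upper one needs $k\to\infty$ before $M\to\infty$ (since the tail error $\sup_n\E T_{n,M}^{j+1}/k$ grows with $M$) — you gesture at this but it is worth stating explicitly that the two one-sided estimates are closed in different orders, which is fine because each gives a one-sided bound on the $\limsup_M\limsup_n$ / $\liminf_M\liminf_n$ pair.
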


This second moment phenomenon for the Poisson distribution for random quadratic forms  complements the well-known fourth-moment phenomenon, which asserts that the limiting normal distribution of certain centered homogeneous forms is implied by the convergence of the corresponding sequence of fourth moments (refer to Nourdin et al. \cite{wchaos_I,wchaos_II} and the references therein, for general fourth-moment theorems and invariance principles and \cite{BDM,xiao} for an example of this phenomenon in random graph coloring). As in the fourth-moment phenomenon for normal approximation, this second moment phenomenon for Poisson approximation exhibits universality (see Section \ref{sec:general_quadratic} below), and we expect this phenomenon to extend beyond the quadratic to general integer-valued homogeneous sums.

\subsection{Universality}
\label{sec:general_quadratic}
 
It is natural to ask what happens if one considers quadratic forms in other integer-valued random variables (not necessarily Bernoulli). More precisely, if $X_1,X_2,\ldots, X_n$ are i.i.d. non-negative integer valued random variables with distribution function $F_n$, then (similar to \eqref{eq:TGn}) the $F_n$-quadratic form, indexed by a graph $G_n$, is defined as  
\begin{align}\label{eq:TGn_general}
T_n =\frac{1}{2} \sum_{1 \leq u, v \leq n} a_{u v}(G_n)X_u X_v=\frac{1}{2}\bm X' A(G_n) \bm X,
\end{align}
where $\bm X=(X_1,X_2,\ldots, X_n)'$. It turns out that the limiting distribution of a general $F_n$-quadratic form exhibits a universality, whenever $X_1$ has the property $\frac{\E X_1}{\P(X_1=1)}=1+o(1)$, that is, the contribution to the expectation is essentially determined by $\P(X_1=1)$.  

\begin{cor}\label{cor:quadratic_general} Suppose $\{X_v\}_{1\le v\le n}$ are i.i.d. non-negative integer valued random variables with $p_n:=\P(X_1=1) \rightarrow 0$, such that $|E(G_n)| p_n^2 =\Theta(1)$ $($as in \eqref{eq:ETn}$)$ and $\lim_{n \rightarrow \infty}\frac{1}{p_n} \E X_1 =  1$. Then if the graph sequence $\{G_n\}_{n \geq 1}$ satisfies the assumptions of Theorem  \ref{thm:poisson_quadratic},
\begin{align*} 
T_n \dto Q_1+Q_2+Q_3,
\end{align*} 
where $T_n$ is as defined in \eqref{eq:TGn_general} and $Q_1$, $Q_2$, and $Q_3$ are as in Theorem \ref{thm:poisson_quadratic}.
\end{cor}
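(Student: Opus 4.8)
\textbf{Proof proposal for Corollary \ref{cor:quadratic_general}.}

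The plan is to reduce the general $F_n$-quadratic form to the Bernoulli case already handled in Theorem \ref{thm:poisson_quadratic}, by exploiting the hypothesis $\E X_1 = (1+o(1)) p_n$, which forces the mass of $X_1$ to concentrate on $\{0,1\}$ in a quantitatively useful sense. First I would record the elementary consequences of the three hypotheses. Write $p_n=\P(X_1=1)$, $q_n = \P(X_1 \geq 2)$, and let $\mu_n = \E X_1$. Since $\mu_n \geq p_n + 2 q_n$ and $\mu_n = (1+o(1))p_n$, we get $q_n = o(p_n)$; more precisely $2q_n \leq \mu_n - p_n = o(p_n)$. Since $|E(G_n)| = \Theta(p_n^{-2})$ (from \eqref{eq:ETn}), the expected number of vertices $v$ with $X_v \geq 2$ among those incident to an edge is at most $n q_n$, and one checks $n p_n \to \infty$ is not needed — what is needed is that the probability that \emph{any} vertex incident to an edge has $X_v \geq 2$ is $o(1)$. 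This last point requires a short argument: naively $n q_n$ need not be $o(1)$, so I would instead bound the contribution of such vertices directly inside $T_n$ rather than discarding them vertex-wise. Concretely, decompose $X_v = \bm 1\{X_v = 1\} + X_v \bm 1\{X_v \geq 2\} =: Y_v + Z_v$, where $Y_v \sim \dBer(p_n)$ i.i.d., and let $T_n^Y$ be the Bernoulli quadratic form built from the $Y_v$'s. By Theorem \ref{thm:poisson_quadratic}, $T_n^Y \dto Q_1 + Q_2 + Q_3$. It then suffices to show $T_n - T_n^Y \pto 0$.

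The main step is therefore the bound $T_n - T_n^Y \pto 0$. Expanding,
\begin{align*}
T_n - T_n^Y = \sum_{(u,v) \in E(G_n)} \big( Y_u Z_v + Z_u Y_v + Z_u Z_v \big),
\end{align*}
and all terms are non-negative, so it is enough to bound the expectation. Using independence across vertices, $\E[Y_u Z_v] = p_n \E[Z_v] = p_n(\mu_n - p_n) = p_n \cdot o(p_n) = o(p_n^2)$, and $\E[Z_u Z_v] = (\mu_n - p_n)^2 = o(p_n^2)$. Hence
\begin{align*}
\E\,(T_n - T_n^Y) \leq |E(G_n)| \cdot \big( 2 p_n (\mu_n - p_n) + (\mu_n - p_n)^2 \big) = \Theta(p_n^{-2}) \cdot o(p_n^2) = o(1),
\end{align*}
using $|E(G_n)| p_n^2 = \Theta(1)$. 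Therefore $T_n - T_n^Y \pto 0$ by Markov's inequality, and Slutsky's theorem gives $T_n \dto Q_1 + Q_2 + Q_3$.

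The only subtlety — and the part I would write out carefully — is that the $Y_v = \bm 1\{X_v = 1\}$ are genuinely i.i.d.\ $\dBer(p_n)$ (immediate, since the $X_v$ are i.i.d.), so Theorem \ref{thm:poisson_quadratic} applies to $T_n^Y$ verbatim with the same graph sequence $\{G_n\}$ and the same $p_n$; no re-verification of conditions (a)–(c) is needed because they are statements about $\{G_n\}$ and $r_n = 1/p_n$ alone. I do not anticipate a genuine obstacle here: the hypothesis $\E X_1/\P(X_1=1) \to 1$ is exactly calibrated so that the ``$\geq 2$'' part of each $X_v$ contributes $o(p_n^2)$ per edge in expectation, which is summable against $|E(G_n)| = \Theta(p_n^{-2})$. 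One should double-check that the decomposition $X_v = Y_v + Z_v$ does not destroy independence in $T_n^Y$ versus the error term — it does not, because we only need the marginal law of $T_n^Y$ for the distributional limit and a first-moment bound on the error, neither of which requires joint control.
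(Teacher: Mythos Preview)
Your proof is correct and follows essentially the same route as the paper: truncate $X_v$ to $Y_v=\bm 1\{X_v=1\}$, apply Theorem~\ref{thm:poisson_quadratic} to the resulting Bernoulli quadratic form, and show the remainder vanishes in probability using $\E X_1 - p_n = o(p_n)$ together with $|E(G_n)|p_n^2=\Theta(1)$. The only cosmetic difference is that you bound $\E(T_n-T_n^Y)$ and invoke Markov, whereas the paper bounds $\P(T_n\neq T_n')$ directly via a union bound over edges; both arguments are equally short and rest on the same estimate $\P(X_1\ge 2)=o(p_n)$.
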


This result shows that Theorem \ref{thm:poisson_quadratic}, and, as a consequence, Corollary \ref{cor:poisson_quadratic_W} and Corollary \ref{cor:trunc}, extend beyond the (sparse) Bernoulli, to include cases like the sparse Poisson, Binomial, Negative Binomial, and Hypergeometric, among others, and complements the well-known universality of the Weiner chaos for centered homogeneous sums \cite{wchaos_I}. 

\begin{enumerate}

\item {\it Sparse Poisson}: Suppose $X_1, X_2, \ldots, X_n$ are i.i.d. $\dPois(\theta_n)$, where $\theta_n \rightarrow 0$. In this case, $\P(X_1=1) =\theta_n e^{-\theta_n} \rightarrow 0$ and $\E X_1=\theta_n$, and so $\frac{\E X_1}{\P(X_1=1)}=e^{\theta_n} \rightarrow 1$, as required in Corollary \ref{cor:quadratic_general}.

\item {\it Sparse Binomial}: Suppose $X_1, X_2, \ldots, X_n$ are i.i.d. $\dBer(m_n, \theta_n)$, where $m_n$ and $\theta_n$ satisfy $m_n\theta_n\rightarrow 0 $. In this case, $\P(X_1=1) = m_n \theta_n (1-\theta_n)^{m_n-1} \rightarrow 0$, and $\E X_1=m_n\theta_n$, and so $\frac{\E X_1}{\P(X_1=1)}=\frac{1}{ (1-\theta_n)^{m_n-1}} \rightarrow 1$, as required in Corollary \ref{cor:quadratic_general}.

\item {\it Sparse Negative Binomial}: Suppose $X_1, X_2, \ldots, X_n$ are i.i.d. $\text{NB}(m_n, \theta_n)$ with $$\P(X_1=r)={r+m_n-1\choose r}(1-\theta_n)^{m_n}\theta_n^r, \quad \text{ for } \quad r=0, 1, \ldots~.$$ where $m_n$ and $\theta_n$ satisfy $m_n\theta_n\rightarrow 0 $. In this case, $\P(X_1=1) = m_n  \theta_n (1-\theta_n)^{m_n} \rightarrow 0$, and $\E X_1=\frac{m_n\theta_n}{1-\theta_n}$, and so $\frac{\E X_1}{\P(X_1=1)}=\frac{1}{ (1-\theta_n)^{m_n+1}} \rightarrow 1$, as required in Corollary \ref{cor:quadratic_general}.

\item {\it Sparse Hypergeometric}: Suppose $X_1, X_2, \ldots, X_n$ are i.i.d. $\text{HGeom}(N_n,K_n,m_n)$ with $$\P(X_1=r)=\frac{{{K_n}\choose{r}}{{N_n-K_n}\choose {m_n-r}}}{{N_n \choose m_n}}, \quad \text{for} \quad r \in \{\max(0,m_n+K_n-N_n),\ldots,\min(m_n,K_n)\},$$
where $(N_n, K_n,m_n)$ satisfy $N_n \rightarrow \infty$, $\frac{m_nK_n}{N_n}\rightarrow 0$, and $\min(m_n, K_n) \geq 1$. This implies $N_n-(m_n+K_n)\rightarrow\infty$, and so, for all $n$ large,  $0$ and 1 are both  in the support of $X_1$.  Further, 
\begin{align*}
\P(X_1=1) =&\frac{K_n {N_n-K_n\choose m_n-1}}{{N_n\choose m_n}}=\frac{m_n K_n}{N_n} \cdot \frac{(N_n-K_n)! (N_n-m_n)!}{(N_n-K_n-m_n+1)! (N_n-1)!}\\
=&\frac{m_nK_n}{N_n}\prod_{s=1}^{m_n-1}\frac{N_n-K_n+1-s}{N_n-s}=\frac{m_nK_n}{N_n}\cdot a_n,
\end{align*}
where
\begin{align*}
1 \geq a_n =\prod_{s=1}^{m_n-1} \left(1-\frac{K_n-1}{N_n-s}\right)\ge \left(1-\frac{K_n-1}{N_n-m_n+1}\right)^{m_n-1} \rightarrow 1,
\end{align*}
since $\frac{m_nK_n}{N_n}\rightarrow 0$. Thus, $\P(X_n=1)\rightarrow 0$ and $\frac{\E X_1}{\P(X_1=1)}=\frac{1}{a_n} \rightarrow 1$, as required in Corollary \ref{cor:quadratic_general}.
\end{enumerate}

\subsection{Organization} The rest of the paper is organized as follows: In Section \ref{sec:examples}, we compute the limiting distribution in various examples. The proofs of Theorem \ref{thm:poisson_quadratic} and Theorem \ref{thm:distributional_limit} are given in Section \ref{sec:pf_quadratic} and Section \ref{sec:pf_distributional_limit}, respectively. The proofs of Corollaries \ref{cor:poisson_quadratic_W}, \ref{cor:trunc}, and \ref{cor:quadratic_general} are given in Section \ref{sec:pfcorollary}. Details about Poisson stochastic integrals  and other technical lemmas are discussed in the appendix. 

\section{Examples} 
\label{sec:examples}

In this section we use Theorem \ref{thm:poisson_quadratic} to compute the limiting distribution of $T_n$ for various graph sequences. In the examples below, we will often construct graph sequences $G_n=(V(G_n), E(G_n))$, where $|V(G_n)| \ne n$, but $|V(G_n)| \rightarrow \infty$, as $n \rightarrow \infty$. In such cases, the definitions in \eqref{eq:WGn} and \eqref{eq:normalized_degree} have to be modified, with the number of vertices $n$ replaced by $|V(G_n)|$ appropriately, following which the results hold verbatim. 

We begin with an application of Corollary \ref{cor:poisson_quadratic_W} for dense block graphons.

\begin{example}(Dense Block Graphons) Let $X_1, X_2, \ldots, X_n$ be i.i.d. $\dBer(\lambda/n)$, for some $\lambda>0$. Fix $\kappa>0$ and consider a sequence of dense graphs $G_n$ converging in cut-metric to the $B$-block function $f: [0, \kappa]^2 \rightarrow [0, 1]$, given by 
\begin{align}\label{eq:block}
f(x, y)=\left\{
\begin{array}{cc}
b_{jj}  & \text{ if }  x, y \in [c_{j-1}, c_j],  \text{ for some } j \in [B], \\
b_{jj'}  & \text{ if }  x \in [c_{j-1}, c_j], y \in [c_{j'-1}, c_{j'}], \text{ for some } j\ne j' \in [B],
\end{array}
\right.
\end{align}
where $c_0=0$, $c_B=\kappa$, $[B] := \{1,2,\cdots,B\}$, and the constants $\{b_{jj'}, j, j' \in [B]\}$, and $c_1, c_2, \ldots, c_B$ are chosen such that $b_{jj'}=b_{j'j}$, for $j \ne j' \in [B]$ and $\int_0^\kappa \int_0^\kappa f(x, y) \mathrm d x \mathrm dy >0$.\footnote{This is obtained as the graph limit of a Stochastic Block Model (SBM) on $\ceil{n\kappa}$ vertices and $B$ blocks, where the edge $(u, v)$ exists independently with probability $b_{jj'}$, when $u \in [\ceil{nc_{j-1}}, \ceil{nc_j}]$ and $v \in [\ceil{nc_{j'-1}}, \ceil{nc_{j'}}]$.}  Now, given $t_1 \geq 0$, recall $\phi_{f, t_1}(x, y):=\log(1-f(x, y)+f(x, y)e^{-t_1})$. Then by Example \ref{integralB} and \eqref{eq:Q1W}, for $t_1\geq 0$, 
\begin{align}
\E \exp&\Big\{-t_1 Q_1\Big\}=\E \exp\left\{ \sum_{j=1}^B \psi_{f, t_1}(j, j) {N_j \choose 2} +   \sum_{1 \leq j < j' \leq B} \psi_{f, t_1}(j, j') N_j  N_{j'}\right\},
\end{align} 
where $\psi_{f, t_1}(j, j'):=\log(1-b_{jj'}+b_{jj'}e^{-t_1})$, for $j,j' \in [B]$, and $\{N_1, N_2, \ldots, N_B\}$ are independent with $N_j \sim \dPois(c_j-c_{j-1})$. Now, consider the random variable,  
\begin{align}\label{eq:Q1B}
Q_1':=\sum_{j=1}^B \eta_{jj}+  \sum_{1 \leq j < j' \leq B} \eta_{jj'},
\end{align}
where $\eta_{jj}\sim \dBin({N_j \choose 2}, b_{jj})$, $\eta_{jj'}\sim \dBin\left(N_j N_{j'}, b_{jj'}\right)$ for $j\neq j'$, and the collection $\{\eta_{jj'}: 1 \leq j, j' \leq B\}$ are independent given $\{N_1, N_2, \ldots, N_B\}$.\footnote{
Given $q \in [0, 1]$ and a discrete random variable $X$, we denote by $Z \sim \dBin(X, q)$ a Binomial distribution with a random number of trails $X$. More precisely, for $z \in \{0, 1, \ldots, \}$, $\P(Z=z) =\E[{X \choose z} q^z (1 - q)^{X-z}]$.} Then it follows that, for $t_1 \geq 0$, 
\begin{align}
\E \exp&\Big\{-t_1 Q_1'|\{N_1, N_2, \ldots, N_B\}\Big\} \nonumber \\
& = \prod_{j=1}^B (1-b_{jj}+b_{jj}e^{-t_1})^{{N_j \choose 2}} \prod_{1\leq j < j' \leq B}  (1-b_{jj'}+b_{jj'}e^{-t_1})^{N_j N_{j'}}\nonumber \\ 
& = \exp\left\{ \sum_{j=1}^B \psi_{f, t_1}(j, j) {N_j \choose 2} +  \sum_{1 \leq j < j' \leq B} \psi_{f, t_1}(j, j') N_j  N_{j'}\right\}.
\end{align}
This implies, for all $t_1 \geq 0$, $\E \exp\{-t_1 Q_1'\}=\E \exp\{-t_1 Q_1\}$, that is, $Q_1\stackrel{D}= Q_1'$, which shows, if $\{G_n\}_{n \geq 1}$ is a sequence of graphs converging to the $B$-block function $f$ (as in \eqref{eq:block}), then $T_n \dto Q_1'$, as defined in \eqref{eq:Q1B}. For specific choices of $f$ this further simplifies. For example, suppose $\{G_n\}_{n \geq 1}$ is a sequence of graphs converging to the 2-block function
$$W(x, y)=\left\{
\begin{array}{cc}
b_{11}  & \text{ for }  x, y \in [0, \alpha],   \\
b_{22}  &   \text{ for }  x, y \in [\alpha, 1],   \\
b_{12}  &    \text{ otherwise.} 
\end{array}
\right.$$
Then,
\begin{align}\label{eq:Tnb2}
T_n \dto \dBin\left({N_1\choose 2}, b_{11}\right) + \dBin\left(N_1N_2, b_{12} \right) + \dBin\left({N_2\choose 2}, b_{22} \right),
\end{align}
where $N_1 \sim \dPois(\alpha \lambda), N_2 \sim \dPois((1-\alpha) \lambda)$ are independent, and the three summands in \eqref{eq:Tnb2} are independent given $N_1, N_2$. This includes as special cases, the Erd\H os-R\'enyi graph and the random bipartite graph.\footnote{By a simple conditioning argument, Corollary \ref{cor:poisson_quadratic_W} and \ref{cor:trunc} can be extended to random graphs by conditioning on the graph, under the assumption that the graph and its coloring are jointly independent (see for example \cite[Lemma 4.1]{BMM}). In particular, the convergence of $T_n$ in Corollary \ref{cor:poisson_quadratic_W} and Corollary \ref{cor:trunc} hold whenever the required conditions hold in probability.}

\begin{itemize}

\item  {\it Dense Erd\H os-R\'enyi  Graphs}:  When $\alpha=1$, the graphon $W$ reduces to the constant function $b_{11}$. This is attained as the graphon limit when $G_n \sim G(n, b_{11})$ is a sequence of  Erd\H os-R\'enyi random graphs such that $b_{11} \in (0, 1]$ is fixed. In this case, \eqref{eq:Tnb2}  simplifies to 
\begin{align}\label{eq:random_dense}
T_n \dto \dBin\left({N_1\choose 2}, b_{11}\right),
\end{align}
where $N_1 \sim \dPois(\lambda)$. In particular, if $b_{11}=1$, that is, $G_n=K_n$ is the complete graph, then $T_n \dto {N_1 \choose 2}$ (recall \eqref{eq:Kn}).

\item  {\it Random Bipartite Graphs}:  When $b_{11}=b_{22}=0$, then this is attained as the limit of the random bipartite graph $G_n \sim G(\ceil{\alpha n}, \ceil{(1-\alpha) n}, b_{12})$, with edge probability $b_{12} \in (0, 1]$. Then, \eqref{eq:Tnb2}  simplifies to
$$T_n \dto \dBin\left(N_1N_2, b_{12}\right),$$
where $N_1\sim \dPois(\alpha \lambda)$, $N_2 \sim \dPois((1-\alpha)\lambda)$ are independent. 

\end{itemize}

\end{example}

For more sparser graphs, the limiting distribution is often a Poisson, and Corollary \ref{cor:trunc} can be applied.

\begin{example} (Non-Dense Approximately Regular Graphs)
Let $X_1,X_2,\ldots, X_n$ be i.i.d. $\dBer(p_n)$ and $\{G_n\}_{n \geq 1}$ be a sequence of graphs such that 
\begin{align}\label{eq:lambda_rn}
\lim_{n\rightarrow\infty} |E(G_n)| p_n^2 = \lambda \quad \text{and} \quad \Delta(G_n):=\max_{v \in V(G_n)} d_v = o(r_n).
\end{align}
Then for any $\varepsilon>0$ there exists $n$ large enough, such that $d_v \leq \varepsilon r_n$, for all $v \in V(G_n)$.  Hence, for any $M \geq 1$ and $n$ large enough $T_n=T_{n, M}$. This implies, 
\begin{align}\label{eq:ETnMrn}
\lim_{M\rightarrow\infty}\lim_{n\rightarrow\infty} \E T_{n, M}=\lim_{n\rightarrow\infty} \E T_n=\lim_{n\rightarrow\infty} |E(G_n)| p_n^2 \rightarrow \lambda.
\end{align}
Moreover, for all large $n$, 
\begin{align}\label{eq:VTnMrn}
\Var(T_{n, M})=\Var(T_n)=|E(G_n)| \Var(X_1 X_2) + 2 N(K_{1, 2}, G_n)    \Cov(X_1 X_2, X_1X_3),
\end{align}
where $N(K_{1, 2}, G_n) =\sum_{v=1}^n {d_v \choose 2}$ denotes the number of 2-stars in the graph $G_n$. Note that $\Var(X_1 X_2) = p_n^2-p_n^4$ and $\Cov(X_1 X_2, X_1X_3)=p_n^3-p_n^4$. Therefore, $\lim_{n\rightarrow\infty} |E(G_n)| \Var(X_1 X_2) = \lambda$, and using $N(K_{1, 2}, G_n) \leq \varepsilon |E(G_n)| r_n$,  gives $\limsup_{n\rightarrow\infty} N(K_{1, 2}, G_n)    \Cov(X_1 X_2, X_1X_3) \leq \varepsilon$. Then \eqref{eq:VTnMrn} implies, $$\lim_{M\rightarrow\infty}\lim_{n\rightarrow\infty} \Var(T_{n, M}) = \lambda,$$ since $\varepsilon$ is arbitrary. This combined with \eqref{eq:ETnMrn} and Corollary \ref{cor:trunc} shows that $T_n \dto \dPois(\lambda)$, whenever \eqref{eq:lambda_rn} holds. This derives the limiting distribution of non-dense (that is, $|E(G_n)|=o(n^2)$), `approximately' regular graphs.

\begin{itemize} 

\item {\it Non-Dense Regular Graphs}: Let $G_n$ be a sequence of $d$-regular graphs  such that $d=o(n)$ and $\frac{nd}{2} p_n^2 \rightarrow \lambda$. Then $r_n=1/p_n=\Theta(\sqrt{nd})$ and the maximum degree $d=o(r_n)$. Therefore, by the argument above, $T_n \dto \dPois(\lambda)$.

\item  {\it Non-Dense Erd\H os-R\'enyi  Graphs}:   Let $G_n \sim G(n, q_n)$ be a sequence of  Erd\H os-R\'enyi random graphs such that $\frac{\log n}{n} \ll  q_n  \ll 1$ and $\frac{n^2 q_n}{2} p_n^2 \rightarrow \lambda$. Then $r_n=1/p_n=\Theta( n \sqrt{q_n})$ and the maximum degree $\Delta(G_n)=(1+o_P(1))nq_n=o(r_n)$ \cite{KS}. Therefore, by the argument above, $T_n \dto \dPois(\lambda)$. 
\end{itemize}
\end{example}

In the example above, the maximum degree of $G_n$ is `small', and, as a result, condition  (b) in Corollary \ref{cor:trunc} holds for the original (un-truncated) random variable $T_n$, as well (see \eqref{eq:ETnMrn} and \eqref{eq:VTnMrn}). However, the truncation is necessary when there are few vertices with `large' degree, as illustrated below.

\begin{example} Let $X_1,X_2,\ldots, X_n$ be i.i.d. $\dBer(\gamma/\sqrt n)$. We consider two examples where truncation matters: 

\begin{itemize}

\item[(1)] Let $G_n=K_{1, n}$ be the $n$-star. Then $|E(G_n)|=n$ and \eqref{eq:ETn} is satisfied. In this case, since the degree of the central vertex of the star is $n \gg M \sqrt n$, for any $M\geq 1$, $T_{n,M}$ is identically zero. Hence, condition (b) in Corollary \ref{cor:trunc} holds with $\lambda=0$, which implies $T_n \pto 0$. 

\item[(2)] To get a non-zero limiting distribution, take $G_n$ to be the disjoint union of a $n$-star $K_{1, n}$ and $n$ disjoint edges ($\{a_1, b_1\}, \{a_2, b_2\}, \ldots, \{a_n, b_n\})$. As before, there is  no contribution to $T_{n,M}$ from the star-graph, and 
$$T_{n,M}=\sum_{j=1}^n X_{a_j} X_{b_j}.$$
This is the sum of independent indicators $Z_j=X_{a_j} X_{b_j}\sim \dBer(\gamma^2/n)$, and hence $\E T_{n,M}= \gamma^2$ and $\Var(T_{n,M})\rightarrow \gamma^2$. Then, by Corollary \ref{cor:trunc}, $T_n \dto \dPois(\gamma^2)$. 
\end{itemize}
Note that, as expected, in both the examples above the convergence is not in $L_1$: in (1) $\E T_n=\gamma^2$ and in (2) $\E T_n=2\gamma^2$.
\end{example}

The Poisson mixture arises in the limit of $T_n$ for bipartite graph which have many `high' degree vertices on one of the sides, and is best illustrated by considering a disjoint union of star graphs. 

\begin{example}\label{example:sI} (Disjoint Union of Stars) Let $G_n$ be the disjoint union of $n$ isomorphic copies of the $n$-star $K_{1, n}^{(1)}, \ldots, K_{1, n}^{(n)}$. Note that, $|V(G_n)|=n^2 + n$ and $|E(G_n)|=n^2$. Label the central vertices of the stars $1, 2, \ldots, n$, the leaves of the vertex 1 as $n+1, \ldots, 2n$, the leaves of the vertex 2 as $2n+1, \ldots, 3n$, and so on. Let $X_1, X_2, \ldots, X_n$ be i.i.d. $\dBer(1/n)$, which ensures $\E(T_n)=\frac{|E(G_n)|}{n^2}=1$. Fix $K \geq 1$, denote by $G_{n,K}$ the induced subgraph of $G_n$ on the first $\ceil{Kn}$ vertices. Then 
$$\int_0^K \int_0^K W_{G_n}(x, y) \mathrm d x\mathrm dy \leq \frac{2|E(G_{n, K})|}{n^2} \lesssim \frac{K}{n} \rightarrow 0,$$
as $n \rightarrow \infty$.\footnote{For $a, b \in \R$, $a \lesssim b$, $a \gtrsim b$, and $a \sim b$ means $a \leq C_1 b$,  $a \geq C_2 b$, and $C_2 b \leq a \leq C_1 b$, for some universal constants $C_1, C_2 >0$, respectively.} Therefore, $||W_{G_n}||_{\square([0, K]^2)}  \lesssim ||W_{G_n}||_{L_1([0, K]^2)} \rightarrow 0$, that is, condition \ref{eq:WK_condition_I} holds with $W=0$. Moreover, for every $K \geq 1$, there is no edges in $G_n$ between the vertices $\{\ceil{Kn}+1, \ldots, n^2\}$, which means  
$\lim_{K \rightarrow \infty}\lim_{n \rightarrow \infty} \int_K^\infty \int_K^\infty W_{G_n}(x, y) \mathrm d x\mathrm dy=0.$ Finally, the normalized degree-functional is (recall \eqref{eq:normalized_degree}),
\begin{align*}
d_{W_{G_n}}(x):= 
\frac{1}{n} \sum_{j=1}^{n^2+n} a_{\ceil{xn} j}(G_n) = \left\{
\begin{array}{cc} 
1 & \text{ for }  x \in \left[0, 1\right] \\
\frac{1}{n}  &      \text{ for }  x \in  \left(1, n+1 \right].  
\end{array}
\right.
\end{align*}
This converges to the function $d(x)= \bm  1\{x \in [0, 1]\}$ in $L_1([0, K])$.  To see this, fixing $K \geq 1$,  note that $\int_0^K |d_{W_{G_n}}(u) - d(u) | \mathrm du = \frac{K-1}{n} \rightarrow 0$. Therefore, the conditions of  Theorem \ref{thm:poisson_quadratic} hold with $\lambda_0=0$, $W=0$, and $d(x) = \bm 1\{x\in [0, 1]\}$ (by the discussion in Remark \ref{rem:poisson_condition} and Observation \ref{obs:degree_M}). Hence, 
$$T_n \dto \dPois(N), \quad \text{where} \quad N\sim \dPois(1),$$
This is a type of {\it compound Poisson distribution}: a special case of the Poisson mixture, where the mean itself is a Poisson random variable (recall \eqref{eq:disjointK1n} with $\lambda=1$). 
\end{example}

\begin{figure*}[h]
\centering
\begin{minipage}[c]{1.0\textwidth}
\centering
\includegraphics[width=3.5in]
    {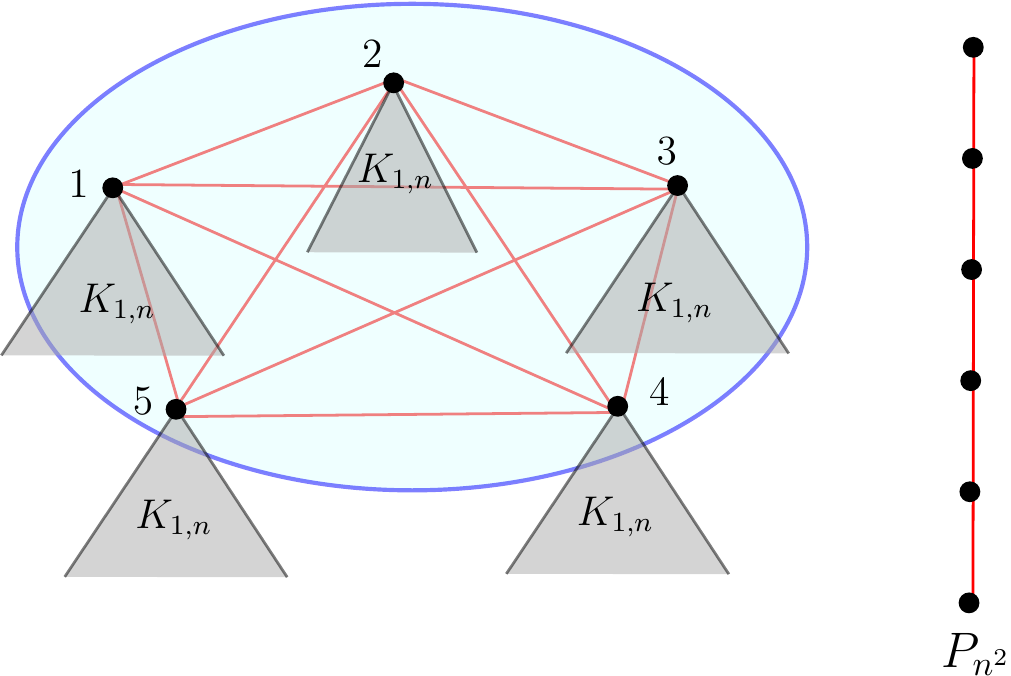}\\
\end{minipage}
\caption{\small{Illustration for Example \ref{ex:starcomplete_I}.}}
\vspace{-0.1in}
\label{fig:component}
\end{figure*}

One can easily modify the example above to construct graph sequences for which the quadratic component and the Poisson mixture component appear together in the limit:

\begin{example}\label{ex:starcomplete_I} (Coexistence I) Let $X_1, X_2, \ldots, X_n$ be i.i.d. $\dBer(1/n)$. Construct the graph $G_n$ as follows (see Figure \ref{fig:component}): 
\begin{itemize}
\item Consider disjoint union of $n$ isomorphic copies of the $n$-star $K_{1, n}^{(1)}, \ldots, K_{1, n}^{(n)}$, with vertices labeled as in Example \ref{example:sI} above. 

\item Place a complete graph $K_n$ on the vertices labeled $1,2, \ldots, n$. 

\item Place a path of length $n^2$ with vertices labelled $n^2+n+1, \ldots, 2n^2+n$, disjoint from everything else. 
\end{itemize} 
Here, $|V(G_n)|=2n^2+n$ and $|E(G_n)|={n \choose 2} + n^2 + n^2-1 \sim \frac{5}{2}n^2$, and, hence, \eqref{eq:ETn} holds. Then by arguments similar to Example \ref{example:sI} above, it is easy to check that the conditions of Theorem \ref{thm:poisson_quadratic} hold with 
\begin{align}\label{eq:Wdlambda}
W(x, y) = \bm 1 \{ (x, y) \in [0, 1]^2 \}, \quad d(x)=2 \cdot \bm  1\{x \in [0, 1]\}, \quad \text{and} \quad \lambda_0=1.
\end{align} 
Then $\Delta(x)=d(x)-\int_0^\infty W(x, y) \mathrm dy = \bm  1\{x \in [0, 1]\}$, and by Theorem \ref{thm:poisson_quadratic}, 
\begin{align}\label{eq:starcomplete}
T_n  \dto Q_1+Q_2+Q_3,
\end{align} 
where $Q_3 \sim N_1\sim \dPois(1)$ is independent of  $(Q_1, Q_2)$, and the joint moment generating function of $(Q_1, Q_2)$ is: 
\begin{align}
\E \exp \Big\{-t_1 Q_1-t_2 Q_2\Big\} =\E \exp\left\{- t_1{N_2 \choose 2}-(1-e^{-t_2}) N_2 \right\},
\end{align}
where $N_2 \sim \dPois(1)$. In other words, with a slight abuse of notation, we can write 
$$T_n  \dto {N_2 \choose 2} + \dPois(N_2) + N_1,$$ 
where $N_1$ is independent of ${N_2 \choose 2} + \dPois(N_2)$.  
\end{example}

By repeating the constructions above, it is possible to have distributions where the range of the integrals in \eqref{eq:Q1Q2} are infinite (unlike in the example above, where the range of the integral reduces to $[0, 1]$ because of \eqref{eq:Wdlambda}):

\begin{example} (Coexistence II) Suppose $X_1, X_2, \ldots, X_n$ be i.i.d. $\dBer(1/n)$. For $s \geq 1$, let $a_s=\frac{1}{16^s}$, $b_s=4^s$, and $c_s=\frac{1}{32^s}$. Now, construct the graph $G_n$ as follows: 
\begin{itemize}
\item For each $s \in [\ceil{\log_4 n}]$, take $\ceil{b_s n}$ disjoint isomorphic copies of  the $\ceil{a_s n}$-stars $K_{1, \ceil{a_s n}}^{(1)}$, $K_{1, \ceil{a_s n}}^{(2)}$, \ldots, $K_{1, \ceil{a_s n}}^{(\ceil{b_s n})}$. Label the central vertices of the  $\ceil{a_s n}$-stars, $\ceil{b_{s-1} n}+1, \ldots, \ceil{b_s n}$, where $b_0=0$. 

\item For each $s \in [\ceil{\log_4 n}]$, place a Erd\H os-R\'enyi  random graph $G(\ceil{b_s n}, c_s)$ on the vertices labeled $\ceil{b_{s-1} n}+1, \ldots, \ceil{b_s n}$. 

\end{itemize}
Note that $|V(G_n)|=\sum_{s=1}^{\ceil{\log_4 n}} \ceil{b_s n} (\ceil{a_s n}+1)  =\Theta(n^2)$, and 
$$\E |E(G_n)| \sim \frac{1}{2} \sum_{s=1}^{\ceil{\log_4 n}}   \ceil{b_s n}^2 c_s + \sum_{s=1}^{\ceil{\log_4 n}} \ceil{b_s n} \ceil{a_s n} =\Theta(n^2).$$ (Note that the choice $p_n=1/n$ implies \eqref{eq:ETn} holds.) As before, it can be verified that the conditions of Theorem \ref{thm:poisson_quadratic} hold with $\lambda_0=0$, 
\begin{align}
W(x, y) = \left\{
\begin{array}{cc} 
c_s & \text{ for }  x, y \in \left[ r_{s-1},  r_s \right] \\ 
0 & \text{ otherwise,}
\end{array}
\right. 
\quad 
\text{and} \quad  
d(x)=\left\{
\begin{array}{cc} 
c_s + a_s & \text{ for }  x \in \left[ r_{s-1},  r_s \right] \\ 
0 & \text{ otherwise.}
\end{array}
\right.
\end{align}
for $s \geq 1$ and $r_s=\sum_{i=0}^s  b_i$ is the $s$-th partial sum of the sequence $\{b_i\}_{i \geq 1}$.
Now, for $t_1 \geq 0$ and recalling $\phi_{W, t_1}(x, y):=\log(1-W(x, y)+W(x, y)e^{-t_1})$, it follows from Example \ref{integralB} that,  
\begin{align}\label{eq:Winfinite}
\int_0^\infty \int_0^\infty \phi_{W, t_1}(x, y) \mathrm d N(x) \mathrm d N(y) = \sum_{s=1}^\infty \psi_{W, t_1}(s) {N_s \choose 2},
\end{align}
where $\psi_{W, t_1}(s):=\log(1-c_s+c_se^{-t_1})$, for $s \geq 1$, and $N_s \sim \dPois(b_s)$ and $\{N_1, N_2, \ldots  \}$ are independent. Moreover, 
\begin{align}\label{eq:Delta_infinite}
\int_0^\infty \Delta(x)\mathrm d N(x) = \int_0^\infty \left(d(x)-\int_0^\infty W(x, y)\mathrm dy\right)\mathrm d N(x)= \sum_{s=1}^\infty a_s N_s.
\end{align}
Combining \eqref{eq:Winfinite} and \eqref{eq:Delta_infinite}, with Theorem \ref{thm:poisson_quadratic}, it follows that $T_n \dto Q_1+ Q_2$, where 
\begin{align}\label{eq:R2}
\E \exp\Big\{-t_1Q_1-t_2 Q_2\Big\} &= \E \exp\left\{-t_1 \sum_{s=1}^\infty \psi_{W, t_1}(c_s) {N_s \choose 2}  - (1-e^{-t_2})   \sum_{s=1}^\infty a_s N_s \right\}. 
\end{align}
This can be rewritten, by comparing moment generating functions, as 
$$T_n \dto \sum_{s=1}^\infty \dBin\left( {N_s \choose 2}, \frac{1}{32^s} \right)+\dPois\left(\sum_{s=1}^\infty \frac{N_s}{16^s} \right),$$ 
where, as before, $N_s \sim \dPois(4^s)$ are independent, and conditional on the sequence $\{N_1, N_2, \ldots  \}$, the Poisson and the Binomials above are independent.
\end{example}

We conclude with an example where $T_n$ does not have a limit in distribution, showing the necessity of the conditions in Theorem \ref{thm:poisson_quadratic}.

\begin{example}\label{ex:starcomplete_II} (Non-Existence of Limit) Let $X_1, X_2, \ldots, X_n$ be i.i.d. $\dBer(1/n)$. We will construct a graph sequence $\{G_n\}_{n \geq 1}$ for which $T_n$ does not converge in dsitribution. Let $G_n$ be defined as: 
\begin{itemize}

\item Consider a Erd\H os-R\'enyi random graph $G(n, \frac{1}{4})$ on the vertices labelled $1,2, \ldots, n$, and another independent Erd\H os-R\'enyi random graph $G(n, \frac{1}{2})$ on the vertices labelled $n+1, n+2, \ldots, 2n$.

\item For $n$ odd, attach $n$ disjoint $n$-stars $K_{1, n}^{(1)}, K_{1, n}^{(2)}, \ldots, K_{1, n}^{(n)}$, with central vertices at $1, 2, \ldots, n$ respectively. 

\item For $n$ even, attach $n$ disjoint $n$-stars $K_{1, n}^{(1)}, K_{1, n}^{(2)}, \ldots, K_{1, n}^{(n)}$, with central vertices at $n+1, n+2 \ldots, 2n$, respectively. 

\end{itemize}
Here, $|V(G_n)|=\Theta(n^2)$ and $\E |E(G_n)|=\Theta(n^2)$, hence, \eqref{eq:ETn} holds. Now, from the arguments in \eqref{eq:random_dense} and Example \ref{example:sI}, it follows that 
the contribution to $T_n$ from the $G(n, \frac{1}{4})$ and $G(n, \frac{1}{2})$ components converge to $\dBin({N_1 \choose 2}, \frac{1}{4})$ and $\dBin({N_2 \choose 2}, \frac{1}{2})$, respectively, where $N_1, N_2$ are independent $\dPois(1)$. Moreover, the contribution of the $n$ disjoint stars converge to $\dPois(N_1)$ along the odd subsequence and $\dPois(N_2)$ along the even subsequence. Therefore, along the  odd subsequence, 
\begin{align}\label{eq:limitI}
T_n \dto \dBin\left({N_1 \choose 2}, \frac{1}{4}\right) +\dPois(N_1) + \dBin\left({N_2 \choose 2}, \frac{1}{2}\right),
\end{align}
and along the even subsequence
\begin{align}\label{eq:limitII}
T_n \dto \dBin\left({N_1 \choose 2}, \frac{1}{4}\right) +\dPois(N_2) + \dBin\left({N_2 \choose 2}, \frac{1}{2}\right),
\end{align}
where $N_1, N_2$ are independent $\dPois(1)$, and the conditional on $N_1, N_2$, the Poisson and the 2 binomials are independent.  Clearly, the distributions in \eqref{eq:limitI} and \eqref{eq:limitII} are not the same (this can be easily seen by computing their second moments), that is, $T_n$ does not converge in distribution. This is because, for all $K \geq 1$, the function $d_{W_{G_n}}$ converges in $L_1([0, K])$, to the function $d_+(x)= \bm 1\{ x \in [0, 1] \}$ along the odd subsequence, and to the function $d_-(x)= \bm 1\{ x \in [1, 2] \}$ along the even subsequence, respectively. This shows condition \eqref{eq:dK_condition_II} in Theorem \ref{thm:poisson_quadratic} does not hold. In fact, in this case it can be shown that there is no permutation of the vertices $\{1, 2, \ldots, 2n\}$ for which conditions \eqref{eq:WK_condition_I} and \eqref{eq:dK_condition_II} simultaneously hold, in the permuted graph (recall the discussion in Remark \ref{rem:poisson_condition}). 
\end{example}

\section{Proof of Theorem \ref{thm:poisson_quadratic}}
\label{sec:pf_quadratic}

For positive integers $a< b$, denote by $[a, b] :=\{a, a+1, \ldots, b\}$.\footnote{We will often slightly abuse notation and also use $[a, b]$ to denote the closed interval with points $a, b \in \R$, whenever it is clear from the context.} Throughout we assume that the vertices of $G_n$ are labelled  $ \{1, 2, \ldots, n\}$ in non-increasing order of the degrees. Recall that $d_v$ denotes  the degree of the vertex labelled $v$.

\begin{obs}\label{obs:label} If the vertices of $G_n$ are labelled $\{1, 2, \ldots, n\}$ in the non-increasing order of the degrees $d_{1} \geq d_{2} \geq \ldots \geq d_{n}$, then 
\begin{align}\label{eq:label}
\lim_{K \rightarrow \infty}\lim_{n \rightarrow \infty} \frac{d_{\ceil{K r_n}}}{r_n}=0. 
\end{align}
\end{obs}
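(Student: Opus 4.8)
The plan is to exploit the only structural information available, namely the sparsity condition \eqref{eq:ETn}, which says $|E(G_n)| p_n^2 = \Theta(1)$, equivalently $|E(G_n)| = \Theta(r_n^2)$. In particular there is a constant $C>0$ and an index $n_0$ such that $|E(G_n)| \leq C r_n^2$ for all $n \geq n_0$. Given this, the claim \eqref{eq:label} is a one-line consequence of the handshake lemma together with the fact that the degrees are arranged in non-increasing order.

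Concretely, I would first record that $\sum_{v=1}^n d_v = 2|E(G_n)|$. Adopting the convention $d_m := 0$ for $m > n$ (consistent with the graphon $W_{G_n}$ being supported on $[0, n/r_n]^2$, so that ``the vertex labelled $m$'' has degree zero when it does not exist), this identity reads $\sum_{m \geq 1} d_m = 2|E(G_n)|$ while the monotonicity $d_1 \geq d_2 \geq \cdots$ persists. Then for every integer $m \geq 1$,
\[
m\, d_m \;\leq\; \sum_{j=1}^m d_j \;\leq\; \sum_{j \geq 1} d_j \;=\; 2|E(G_n)|,
\]
so $d_m \leq 2|E(G_n)|/m$. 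Applying this with $m = \lceil K r_n \rceil$ and using $|E(G_n)| \leq C r_n^2$ for $n \geq n_0$ gives
\[
\frac{d_{\lceil K r_n \rceil}}{r_n} \;\leq\; \frac{2 |E(G_n)|}{\lceil K r_n \rceil\, r_n} \;\leq\; \frac{2 C r_n^2}{K r_n \cdot r_n} \;=\; \frac{2C}{K}
\]
for all $n \geq n_0$. Hence $\limsup_{n\to\infty} d_{\lceil K r_n \rceil}/r_n \leq 2C/K$ for every $K$, and letting $K \to \infty$ yields $\lim_{K\to\infty}\limsup_{n\to\infty} d_{\lceil K r_n \rceil}/r_n = 0$, which is \eqref{eq:label}. (If one wishes to read \eqref{eq:label} with a genuine inner limit, the same bound shows the iterated limit exists and equals $0$.)

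There is essentially no hard step here: the argument is a Markov/pigeonhole estimate in which the $\Theta(1)$ bound on $\E[T_n]$ does all the work. The only point requiring a word of care is the edge case $\lceil K r_n \rceil > n$, where the symbol $d_{\lceil K r_n \rceil}$ must be interpreted; under the natural ``degree zero'' convention the displayed bound holds trivially there, so it does not affect the conclusion.
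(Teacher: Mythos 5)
Your proof is correct and follows exactly the same route as the paper: rearrange the handshake lemma, use monotonicity of the degrees to bound $\lceil K r_n\rceil\, d_{\lceil K r_n\rceil} \leq 2|E(G_n)|$, and invoke \eqref{eq:ETn}. Your extra remark about the convention $d_m=0$ for $m>n$ is a sensible clarification but does not change the substance.
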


\begin{proof} Note that $$2|E(G_n)| =\sum_{v=1}^n  d_{v} \geq \sum_{v=1}^{\ceil{K r_n}}d_{v} \geq \ceil{K r_n} d_{\ceil{K r_n}},$$
which implies, by \eqref{eq:ETn}, $d_{\ceil{K r_n}} \lesssim \frac{r_n}{K}$, hence \eqref{eq:label} holds.
\end{proof} 

The first step in the proof of Theorem \ref{thm:poisson_quadratic} is a truncation argument, which shows that vertices with `large' degree have negligible contribution to $T_n$.  To this end, recall the definition of $T_{n,M}$ from \eqref{eq:M}. We begin by showing that the difference between $T_{n}$ and the truncation $T_{n, M}$ above, goes to zero in probability.  
\begin{lem}\label{Tnapprox} Let $T_{n}$ and $T_{n,M}$ be as in defined in \eqref{eq:TGn} and \eqref{eq:M}, respectively. Then 
$$\lim_{M \rightarrow \infty}  \lim_{n \rightarrow \infty} \P\left(T_{n} \ne T_{n, M}\right) = 0.$$  
\end{lem}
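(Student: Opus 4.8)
The plan is to show that the event $\{T_n \ne T_{n,M}\}$ is contained in the event that at least one ``high-degree'' vertex (degree exceeding $M r_n$) is selected, i.e.\ has $X_u = 1$, together with one of its neighbors also selected, and to bound the probability of this event by a first-moment (union bound) computation that goes to zero once $M$ is large. More precisely, writing $H_{n,M} := \{u \in V(G_n) : d_u > M r_n\}$, I first observe that $T_n \ne T_{n,M}$ forces the existence of an edge $(u,v) \in E(G_n)$ with $X_u = X_v = 1$ and $u \in H_{n,M}$ (or $v \in H_{n,M}$); this is immediate from the definitions of $T_n$ in \eqref{eq:TGn} and $T_{n,M}$ in \eqref{eq:M}, since the only terms that differ are those incident to a vertex of $H_{n,M}$. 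Hence
\begin{align*}
\P(T_n \ne T_{n,M}) \le \sum_{u \in H_{n,M}} \sum_{v : (u,v) \in E(G_n)} \P(X_u = 1, X_v = 1) \le p_n^2 \sum_{u \in H_{n,M}} d_u,
\end{align*}
using independence of the $X_u$ (there is no double counting issue worth worrying about at the level of an upper bound; one may also keep $\le \sum_{(u,v)\in E(G_n)} \mathbf 1\{u \in H_{n,M} \text{ or } v \in H_{n,M}\} p_n^2$ and bound that sum by $2 p_n^2 \sum_{u \in H_{n,M}} d_u$).

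The remaining task is to show $\lim_{M\to\infty}\limsup_{n\to\infty} p_n^2 \sum_{u \in H_{n,M}} d_u = 0$. Here I would use the degree ordering $d_1 \ge d_2 \ge \cdots \ge d_n$ together with Observation \ref{obs:label}: by \eqref{eq:label}, for any fixed $K$ and all $n$ large, $d_{\ceil{Kr_n}} \le M r_n / 2 < M r_n$ (for $K$ chosen large depending on $M$ via the $\lesssim$ constant in Observation \ref{obs:label}), so $H_{n,M} \subseteq \{1, 2, \ldots, \ceil{Kr_n}\}$ is in fact contained in the first $O(r_n / M)$ vertices; more quantitatively, the argument in the proof of Observation \ref{obs:label} gives that any vertex $u$ with $d_u > M r_n$ satisfies $u \le 2|E(G_n)|/(M r_n) \lesssim r_n / M$ by \eqref{eq:ETn}. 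Combining, $\sum_{u \in H_{n,M}} d_u \le \sum_{u=1}^{\lceil C r_n / M\rceil} d_u$, and then I split this sum at a threshold: vertices with $d_u$ very large contribute little in number, while the bulk is controlled by $\sum_u d_u = 2|E(G_n)| \lesssim r_n^2$. The cleanest route is: $p_n^2 \sum_{u \in H_{n,M}} d_u = r_n^{-2}\sum_{u\in H_{n,M}} d_u$, and since $\sum_{u \in V(G_n)} d_u = 2|E(G_n)| = \Theta(r_n^2)$ by \eqref{eq:ETn}, it suffices to show the ``tail mass'' $r_n^{-2}\sum_{u : d_u > Mr_n} d_u \to 0$ as $M \to \infty$ uniformly in $n$; this follows because $\{ r_n^{-2} d_u \}_{u}$ defines, for each $n$, a finite measure of total mass $\Theta(1)$ on $[0,\infty)$ (placing mass $r_n^{-2} d_u$ at the point... ) — actually more simply, $r_n^{-2}\sum_{u: d_u > Mr_n} d_u \le \frac{1}{M}\, r_n^{-2}\sum_u d_u^2 / r_n$ is the wrong direction, so instead use: the number of $u$ with $d_u > Mr_n$ is at most $2|E(G_n)|/(Mr_n) \lesssim r_n/M$, and each such $d_u \le n$; this alone is not enough, so the correct and simplest finish is to note $r_n^{-2}\sum_{u: d_u>Mr_n} d_u = \int_{M}^{\infty} \nu_n(\mathrm dt) $ where $\nu_n := r_n^{-2}\sum_u d_u\,\delta_{d_u/r_n}$ has total mass $\le C$, combined with the fact that $\nu_n([M,\infty)) \le \frac{1}{M}\cdot r_n^{-2}\sum_u d_u \cdot (d_u/r_n)\mathbf 1\{d_u > M r_n\}\le \frac1M r_n^{-3}\sum_u d_u^2$; since $\sum_u d_u^2 = \sum_u d_u\, d_u \le d_1 \sum_u d_u \le n\cdot\Theta(r_n^2)$ this need not be $o(r_n^3)$ in general — so the genuinely correct argument must instead invoke condition \eqref{eq:dK_condition_II} or an integrability input.

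Therefore the main obstacle, and the point where I would be careful, is exactly this last estimate: for a graph with one vertex of degree $n$ and $p_n = 1/\sqrt n$, we have $d_1 = n = \sqrt n \, r_n = r_n \cdot r_n$, so $r_n^{-2} d_1 = 1 \not\to 0$, yet $T_n \ne T_{n,M}$ still has probability $\le p_n^2 d_1 = 1$, which does \emph{not} tend to zero! This means the lemma as stated must be using more than a crude union bound — the resolution is that for a single high-degree vertex $u$, $\P(\exists\, v\sim u: X_u = X_v = 1) = p_n \cdot \P(\mathrm{Bin}(d_u, p_n) \ge 1) = p_n(1 - (1-p_n)^{d_u})\le p_n \cdot \min(1, d_u p_n)$, and summing the sharper bound $p_n \min(1, d_u p_n)$ over $u \in H_{n,M}$: for each such $u$, $p_n\min(1,d_up_n) \le p_n$, and $|H_{n,M}| \lesssim r_n/M$, giving $\le r_n/(M r_n) = 1/M \to 0$. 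So the plan is to replace the naive bound $p_n^2 d_u$ by $p_n\bigl(1-(1-p_n)^{d_u}\bigr) \le p_n$ for each high-degree vertex, use $|H_{n,M}| \le 2|E(G_n)|/(Mr_n) \lesssim r_n/M$ from \eqref{eq:ETn} and the degree ordering, conclude $\P(T_n\ne T_{n,M}) \le \sum_{u\in H_{n,M}} p_n \lesssim \frac{1}{M}$, and let $M\to\infty$. The one subtlety to handle cleanly is double-counting edges with both endpoints in $H_{n,M}$, but since we only need an upper bound this changes nothing (each such edge is simply counted once from each endpoint, at worst doubling the constant).
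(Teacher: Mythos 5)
Your final argument is correct and is essentially the paper's proof: bound $\P(T_n \neq T_{n,M})$ by the probability that some vertex of degree exceeding $M r_n$ is selected, apply a union bound to get $\sum_{u \in H_{n,M}} p_n$, and use $|H_{n,M}| \leq 2|E(G_n)|/(Mr_n)$ (from the handshake inequality) to conclude $\P(T_n \neq T_{n,M}) \leq 2|E(G_n)|p_n^2/M$, which vanishes under the double limit by \eqref{eq:ETn}. The paper reaches the same bound slightly more directly — it never records that $T_n \neq T_{n,M}$ forces a selected \emph{edge} touching a high-degree vertex, only a selected high-degree \emph{vertex}, and replaces the indicator via $\mathbf 1\{d_a > M/p_n\} \leq p_n d_a / M$ rather than first counting $|H_{n,M}|$; the dead end you explore first (the bound $p_n^2 \sum_{u\in H_{n,M}} d_u$) is exactly the one your single-high-degree-vertex example rules out, so you were right to discard it.
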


\begin{proof} Fix $n \geq 1$ and $M > 1$. Then 
\begin{align*}
\P\left(T_{n} \ne T_{n, M} \right) \leq & \P\left(\exists a \in V(G_n): d_a> M r_n \text{ and } X_a = 1\right)\\
\leq & \sum_{a \in V(G_n): d_a>  M r_n} \P(X_a  = 1)\\
=& \sum_{a \in V(G_n)} p_n \bm 1 \left\{d_a>  M/p_n\right\} \tag*{(recall $r_n=1/p_n$).} \\
\leq &  \sum_{a \in V(G_n)} \frac{p_n^2 d_a}{M}= \frac{2|E(G_n)| p_n^2}{M},
\end{align*}
which goes to zero under the double limit, by assumption \eqref{eq:ETn}.
\end{proof}

This shows that it suffices to derive the limiting distribution of $T_{n, M}$. Now, fix $K \geq 1$, and define $$V_{G_n, K}^+:=[\ceil{Kr_n}] \quad \text{and} \quad V_{G_n, K}^-:=[\ceil{Kr_n}+1, n],$$
the first $\ceil{Kr_n}$ vertices and the last $n-\ceil{Kr_n}$ vertices, respectively. Denote by  $$G_{n, K}^+:=G_n[V_{G_n, K}^+]  \quad \text{and} \quad G_{n, K}^-:=G_n[V_{G_n, K}^-],$$ the subgraphs of $G_n$ induced by $V_{G_n, K}^+$ and $V_{G_n, K}^-$, respectively.\footnote{For $S \subseteq V(G_n)$, $G_n[S]$ denotes induced sub-graph of $G_n$ with vertex set $S$.} Finally, let $G_{n, K}^\pm$ be the subgraph of $G_n$ formed by the union of edges with one end point in $V_{G_n, K}^+$ and the other in $V_{G_n, K}^-$. Note that by definition the subgraphs $G_{n, K}^+$, $G_{n, K}^{\pm}$, and $G_{n, K}^-$ partition the edges of $G_n$, that is, $E(G_n) = E(G_{n, K}^+) \bigcup E(G_{n, K}^\pm) \bigcup E(G_{n, K}^-)$  is a disjoint partition of $E(G_n)$. Therefore, we can decompose $T_{n, M}$ as follows: 
\begin{align}\label{eq:Tn123}
T_{n, M}=\sum_{(u, v) \in E(G_n)}X_{u, M} X_{v, M}=T_{n, K, M}^+ + T_{n, K, M}^\pm + T_{n, K, M}^-,
\end{align}
where 
\begin{align}\label{eq:Tn13defn}
T_{n, K, M}^+:=\sum_{(u, v) \in E(G_{n,K}^+)}X_{u, M} X_{v, M} \quad \text{and} \quad T_{n, K, M}^-:=\sum_{(u, v) \in E(G_{n,K}^-)}X_{u, M} X_{v, M}, 
\end{align}
and 
\begin{align}\label{eq:Tn2defn}
T_{n, K, M}^\pm:=\sum_{u \in  V_{G_n, K}^+} \sum_{v \in  V_{G_n, K}^-} a_{uv}(G_n) X_{u, M} X_{v, M}. 
\end{align}

The proof of Theorem \ref{thm:poisson_quadratic} involves deriving the joint distribution of the three terms in \eqref{eq:Tn123}. It has the following main steps: 

\begin{enumerate}
\item[(1)] In Section \ref{sec:pfquadratic1} we show that the moments of $T_{n, K, M}^-$ are close to an approximating variable $W_{n, K, M}$ obtained by replacing the dependent collection of random variables $\{X_uX_v : (u, v) \in E(G_{n, K}^-)\}$ with a collection of independent $\dBer(p_n^2)$ random variables $\{ R_{uv} : (u, v)\in E(G_{n, K}^-)\}$.

\item[(2)] In Section \ref{sec:pfquadratic2} we show that $T_{n, K, M}^-$ is asymptotically independent of $(T_{n, K, M}^+, T_{n, K, M}^\pm)$ in moments. 

\item[(3)] In Section \ref{sec:pfquadratic3} we show that the moments of $T_{n, K, M}^\pm$ are close to an approximating variable $Z_{n, K, M}$, which has  more independence structure than $T_{n, K, M}^-$. 

\item[(4)] In Section \ref{sec:pfquadratic4} we show that the joint distribution of $(T_{n, K, M}^+, Z_{n, K, M})$ converges in distribution and in moments under the assumptions of  Theorem \ref{thm:poisson_quadratic}, as $n \rightarrow \infty$ and for all fixed $K, M$ large enough.  

\item[(5)] To compute the joint distribution of $(T_{n, K, M}^+, Z_{n, K, M})$ we replace the graph in $G_{n, K}^+$ by an inhomogeneous random graph which has the same graph limit as $G_{n, K}^+$ (Section \ref{sec:pfquadratic5}). In this case, limiting moment generating function can be explicitly computed by first taking the expectation with respect to the randomness of the graph. The existence of the limit proved in the earlier section can then be used to show that this has the same limit as $(T_{n, K, M}^+, Z_{n, K, M})$. 

\item[(6)] The proof of \eqref{eq:Q1Q2Q3} is completed in Section \ref{sec:pfquadratic6}, which entails moving from the joint distribution of $(T_{n, K, M}^+, Z_{n, K, M}, W_{n, K, M})$ to that of the actual variables $(T_{n, K, M}^+, T_{n, K, M}^\pm, T_{n, K, M}^-)$, by verifying the Stieltjes moment condition \cite{Stieltjes} and taking limits in the various parameters.

\end{enumerate}

\subsection{Moment Approximation for $T_{n,K,M}^-$}
\label{sec:pfquadratic1}

Define 
\begin{align}\label{eq:Wn-}
W_{n, K}^-:=\sum_{(u, v) \in E(G_{n, K}^-)} R_{uv},
\end{align} 
where $\{ R_{uv} \}_{(u, v)\in E(G_{n, K}^-)}$ is a collection of independent Bernoulli$(p_n^2)$ random variables. In the following lemma, we show that $T_{n,K,M}^-$ and $W_{n, K}^-$ are close in moments. To this end, we need a few notations: For any two graphs $H$ and $G$, let $N(H, G)$ denote the number of isomorphic copies of $H$ in $G$.

\begin{lem}\label{lm1} Fix $M \geq 1$. Then, under the assumptions of Theorem \ref{thm:poisson_quadratic}, for every positive integer $a \geq 1$, 
$$\lim_{K \rightarrow \infty} \limsup_{n \rightarrow \infty}  \Big| \E \left[({T_{n,K,M}^-})^a\right]  - \E \left[({W_{n, K}^-})^a\right]\Big|=0.$$
Moreover, $\limsup_{K \rightarrow \infty} \limsup_{n \rightarrow \infty}\E \left[({T_{n,K,M}^-})^a\right]< C(a)$, for some constant $C(a)>0$.
\end{lem}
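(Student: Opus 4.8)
The plan is to expand both $a$-th moments as sums over $a$-tuples of edges and compare term by term. Writing
\[
\E\left[(T_{n,K,M}^-)^a\right]=\sum_{(e_1,\dots,e_a)\in E(G_{n,K}^-)^a}\E\left[\prod_{i=1}^a X_{u_i,M}X_{v_i,M}\right],
\qquad
\E\left[(W_{n,K}^-)^a\right]=\sum_{(e_1,\dots,e_a)\in E(G_{n,K}^-)^a}\E\left[\prod_{i=1}^a R_{e_i}\right],
\]
where $e_i=(u_i,v_i)$, I would organize the edge-tuples by the multigraph $F$ they span (its vertex set is the set of distinct endpoints, with multiplicities of edges recorded). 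For the independent model, $\E\left[\prod_i R_{e_i}\right]=p_n^{2|E(\widetilde F)|}$ where $\widetilde F$ is the simple graph underlying $F$; for the Bernoulli model, $\E\left[\prod_i X_{u_i,M}X_{v_i,M}\right]$ is bounded above by $p_n^{|V(\widetilde F)|}$ (each distinct vertex appearing contributes one factor of $p_n$), with equality $p_n^{|V(\widetilde F)|}$ when the $X$'s are not truncated, and is only smaller under truncation. The two contributions agree exactly precisely when every connected component of $\widetilde F$ is a single edge, i.e. when the $a$ edges (with multiplicity) form a disjoint union of edges each of multiplicity $\ge 1$; in that case $|V(\widetilde F)|=2|E(\widetilde F)|$ and both sides equal $p_n^{2|E(\widetilde F)|}$, and the truncation is irrelevant because a degree-$1$ vertex in $\widetilde F$ that survives the $d_v\le Mr_n$ cut still contributes $p_n$. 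So the difference of the two moments is a sum over edge-tuples whose underlying graph has at least one component with $\ge 2$ edges.

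The main work is then to show this "error" sum is negligible in the double limit $\lim_{K\to\infty}\limsup_{n\to\infty}$. Group the error terms by the isomorphism type of the underlying connected-components structure; for a fixed such type $H$ with $v=|V(\widetilde H)|$ vertices and $f=|E(\widetilde H)|$ edges, the number of ways to embed it into $G_{n,K}^-$ as a labeled copy is $O\left(N(H',G_{n,K}^-)\right)$ for the relevant simple graph $H'$, and each term is $O(p_n^{v})$ (Bernoulli side) or $O(p_n^{2f})$ (independent side), so the total contribution of type $H$ is $O\left(N(H',G_{n,K}^-)\,p_n^{v}\right)$ plus $O\left(N(H',G_{n,K}^-)\,p_n^{2f}\right)$; since $v\le 2f$ with strict inequality unless $H'$ is a matching, and $p_n\to0$, it suffices to bound $N(H',G_{n,K}^-)\,p_n^{v}$ for connected $H'$ with at least two edges. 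Here is where the degree truncation in $G_{n,K}^-$ enters decisively: every vertex in $V_{G_n,K}^-$ has degree at most $d_{\ceil{Kr_n}}$, which by Observation \ref{obs:label} satisfies $d_{\ceil{Kr_n}}/r_n\to 0$ in the double limit. A standard homomorphism-counting bound gives $N(H',G_{n,K}^-)\le |E(G_{n,K}^-)|\cdot \big(\max_{v\in V_{G_n,K}^-} d_v\big)^{v-2}\le |E(G_n)|\,d_{\ceil{Kr_n}}^{\,v-2}$ for connected $H'$ (pick an edge, then grow the connected subgraph one vertex at a time, each new vertex contributing at most a max-degree factor). Hence
\[
N(H',G_{n,K}^-)\,p_n^{v}\ \le\ |E(G_n)|p_n^2\cdot \left(d_{\ceil{Kr_n}}\,p_n\right)^{v-2}\ =\ \Theta(1)\cdot\left(\frac{d_{\ceil{Kr_n}}}{r_n}\right)^{v-2},
\]
using \eqref{eq:ETn} for the first factor. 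Since $v\ge 3$ for any connected graph with $\ge2$ edges (and $v\ge 2$ always, with $v=3$ being the genuinely new case of a $2$-star or triangle), the exponent $v-2\ge 1$, so by Observation \ref{obs:label} this $\to 0$ as first $n\to\infty$ then $K\to\infty$. Summing over the finitely many isomorphism types $H$ arising from $a$ edges completes the bound on $|\E[(T_{n,K,M}^-)^a]-\E[(W_{n,K}^-)^a]|$.

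For the second assertion, that $\limsup_K\limsup_n \E[(T_{n,K,M}^-)^a]<C(a)$, I would note that $\E[(W_{n,K}^-)^a]\le \E[(W_n)^a]$ where $W_n$ is the analogous sum over all of $E(G_n)$ of independent $\dBer(p_n^2)$ variables, so $W_n\sim\dBin(|E(G_n)|,p_n^2)$ has mean $\Theta(1)$ and all its moments are bounded uniformly in $n$ (a Binomial with bounded mean has bounded $a$-th moment — e.g. dominated by the $a$-th moment of a Poisson with the limiting mean, up to constants); then $\E[(T_{n,K,M}^-)^a]\le \E[(W_{n,K}^-)^a]+o_{K}(1)\le C(a)$ by the first part. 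The main obstacle is organizing the combinatorics of the term-by-term comparison cleanly — identifying exactly which edge-tuples contribute zero to the difference (disjoint unions of edges) and getting a uniform-in-$n$ count of the rest via the max-degree bound in $G_{n,K}^-$; once Observation \ref{obs:label} is invoked the analytic estimate is immediate, and the truncation by $M$ only ever helps (it can only decrease moments and is irrelevant on matchings), so it plays no role beyond being harmless.
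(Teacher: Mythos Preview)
Your approach is essentially the paper's: expand both $a$-th moments over edge-tuples, group by the simple graph $H$ they span, observe the two expectations equal $p_n^{|V(H)|}$ and $p_n^{2|E(H)|}$ respectively, and bound the non-matching contributions using the max-degree estimate from Observation~\ref{obs:label}. Two points deserve tightening.

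First, your treatment of the truncation is muddled. The sentence ``the truncation is irrelevant because a degree-$1$ vertex in $\widetilde F$ that survives the $d_v\le Mr_n$ cut still contributes $p_n$'' conflates the degree of a vertex in the auxiliary graph $\widetilde F$ with its degree in $G_n$; the truncation condition $d_v\le Mr_n$ refers to the latter, so this reasoning does not establish anything. The correct (and simpler) observation---which the paper makes at the outset---is that every $v\in V_{G_n,K}^-$ has $d_v\le d_{\lceil Kr_n\rceil+1}\le \varepsilon r_n<Mr_n$ for $n,K$ large, so $X_{v,M}=X_v$ identically on $V_{G_n,K}^-$ and the truncation disappears entirely before any moment comparison begins. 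You do invoke Observation~\ref{obs:label} later for the counting bound, so you have the ingredient; just use it up front to dispose of $M$.

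Second, the line ``it suffices to bound $N(H',G_{n,K}^-)\,p_n^{v}$ for connected $H'$ with at least two edges'' hides the step that makes the disconnected case work: one must factor $N(H,G_{n,K}^-)\,p_n^{|V(H)|}\le \prod_j N(H_j,G_{n,K}^-)\,p_n^{|V(H_j)|}$ over the components $H_j$, note that single-edge components contribute $|E(G_n)|p_n^2=O(1)$ while any component with $\ge 2$ edges contributes $o(1)$, and conclude the product is $o(1)$ since at least one such component exists. The paper writes this as $N(H,G_{n,K}^-)\lesssim_a \varepsilon^{|V(H)|-2\nu(H)}r_n^{|V(H)|}$ and then observes $|V(H)|>2\nu(H)$ whenever $|V(H)|<2|E(H)|$. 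Your final moment bound via domination by $\mathrm{Bin}(|E(G_n)|,p_n^2)$ is a legitimate alternative to the paper's direct counting argument.
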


\begin{proof} Fix $\varepsilon \in (0, 1)$. Choose $n, K$ large enough so that $\max_{v \in V_{G_n, K}^-} d_v = d_{\ceil{Kr_n}+1} \leq \varepsilon r_n$, which can be done by Observation \ref{obs:label}. This implies $X_{v, M}=X_v$, for all $v \in V_{G_n, K}^-$, when $n, K$ are large enough, and by the the multinomial expansion,
\begin{eqnarray}
\E[({T_{n,K,M}^-})^a]&=&\sum_{(u_1, v_1)\in E(G_{n, K}^-)}\sum_{(u_2, v_2)\in E(G_{n, K}^-)}\cdots\sum_{(u_a, v_a)\in E(G_{n, K}^-)}\E\left[\prod_{s=1}^a X_{u_s} X_{v_s} \right],
\label{eq:t-a}\\
\E[({W_{n, K}^-})^a]&=&\sum_{(u_1, v_1)\in E(G_{n, K}^-)}\sum_{(u_2, v_2)\in E(G_{n, K}^-)}\cdots\sum_{(u_a, v_a)\in E(G_{n, K}^-)}\E\left[\prod_{s=1}^a R_{u_s v_s}\right].
\label{eq:w-m}
\end{eqnarray}
Now, let $H$ be the graph formed by the union of the edges $(u_1,v_1), (u_2, v_2), \ldots, (u_a, v_a)$. Then
\begin{align}\label{eq:tw-}
\E\left[\prod_{s=1}^a X_{u_s} X_{v_s} \right]=p_n^{|V(H)|} \quad \text{ and } \quad \E\left[\prod_{s=1}^{a} R_{u_s v_s}\right]=p_n^{2 |E(H)|}.
\end{align}
If $\cH_a$ denotes the set of all non-isomorphic graphs with at most $a$ edges and no isolated vertex, then \eqref{eq:t-a}, \eqref{eq:w-m}, and \eqref{eq:tw-} combined gives\footnote{For $a, b \in \R$, $a \lesssim_{\circ} b$ means that $a \leq C(\circ) b$, for some constant $C(\circ)$ depending only on the subscripted quantities in $\circ$.} 
\begin{align}\label{one}
\left|\E \left[({T_{n,K,M}^-})^a\right]  - \E \left[({W_{n, K}^-})^a\right]\right| & \lesssim_a \sum_{H \in \cH_a} N(H,G_{n, K}^-)\left|p_n^{|V(H)|}-p_n^{2|E(H)|}\right| \nonumber \\ 
& = \sum_{\substack{H \in \cH_a \\ |V(H)| < 2 |E(H)|}} N(H,G_{n, K}^-)\left|p_n^{|V(H)|}-p_n^{2|E(H)|}\right| \nonumber \\ 
& \le  \sum_{\substack{H \in \cH_a \\ |V(H)| < 2 |E(H)|}} N(H,G_{n, K}^-) p_n^{|V(H)|},
\end{align}
where the second and third steps use the fact that $|V(H)| \leq 2|E(H)|$, since graphs in $\cH_a$ have no isolated vertex.

Now, for any connected graph $H \in \cH_a$, 
\begin{align*}
N(H, G_{n, K}^-) \lesssim_a  |E(G_{n, K}^-)| \left(\max_{v \in V_{G_n, K}^-} d_v \right)^{|V(H)|-2} & \leq |E(G_{n, K}^-)|  \varepsilon^{|V(H)|-2}  r_n^{|V(H)|-2}  \tag*{(by Observation \eqref{obs:label})}\nonumber \\
& \lesssim \varepsilon^{|V(H)|-2} r_n^{|V(H)|},
\end{align*}
where the last step uses $|E(G_{n, K}^-)|\lesssim r_n^2$ by \eqref{eq:ETn}. 

Therefore, if $H \in \cH_a$ has $\nu(H)$ connected components, then using the above bound separately on each of the connected components gives,
\begin{align}\label{eq:NHGn-}
N(H,G_{n, K}^-) \lesssim_a \varepsilon^{|V(H)|-2\nu(H)}p_n^{-|V(H)|}.
\end{align}
Using the estimate above and (\ref{one}) gives, 
\begin{align}\label{two}
\left|\E \left[({T_{n,K,M}^-})^a\right]  - \E \left[({W_{n, K}^-})^a\right]\right| \lesssim_a  \sum_{\substack{H \in \cH_a \\ |V(H)| < 2 |E(H)|}} \varepsilon^{|V(H)|-2\nu(H)}. 
\end{align} 
Now, suppose $H \in \cH_a$ is such that $|V(H)| < 2 |E(H)|$. Note that all connected components of $H$ contains least 2 vertices, and at least one connected component of $H$ must contain at least 3 vertices (otherwise $H$ is a disjoint union of edges, and $|V(H)|=2 |E(H)|$). This implies, $|V(H)| > 2 \nu(H)$, where $\nu(H)$ is the number of connected components of $H$. Since $\varepsilon>0$ is arbitrary and cardinality of the set $\cH_a$ is fixed (free of $n$),  the RHS of \eqref{two} can be made arbitrarily small, and so the LHS of \eqref{two} converges to 0 under the double limit of $n$ goes to infinity followed by $K$ goes to infinity, which is the first desired result.

Finally, from \eqref{eq:t-a}, \eqref{eq:tw-}, and \eqref{eq:NHGn-} we have 
\begin{align*}
\E \left[({T_{n,K,M}^-})^a\right]  & \lesssim_a \sum_{H \in \cH_a} N(H,G_{n, K}^-)p_n^{|V(H)|}  \lesssim_a \sum_{H \in \cH_a} \varepsilon^{|V(H)|-2\nu(H)} \lesssim_a 1, \nonumber 
\end{align*}
because, as before, the sum is over a finite index set free of $n$. \end{proof}

\subsection{Independence in Moments of $(T_{n,K,M}^+, T_{n,K,M}^\pm)$ and $T_{n,K,M}^-$}
\label{sec:pfquadratic2}

In this section, we will show that the mixed moments $(T_{n,K,M}^+, T_{n,K,M}^\pm)$ and $T_{n,K,M}^-$ factorize in the limit.   

\begin{lem}\label{lm:independent_moment} Fix non-negative integers $a, b, c$ and $M > 0$. Then under the assumptions of Theorem \ref{thm:poisson_quadratic}, 
$$\lim_{K \to \infty} \limsup_{n\to \infty} \Big| \E\left[(T_{n,K,M}^+)^a (T_{n,K,M}^\pm)^b (T_{n,K,M}^-)^c\right] - \E \left[(T_{n,K,M}^+)^a (T_{n,K,M}^\pm)^b\right] \E\left[(T_{n,K,M}^-)^c\right] \Big| = 0.$$
\end{lem}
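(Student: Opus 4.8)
The plan is to expand all three powers using the multinomial expansion, as in Lemma~\ref{lm1}, and track which monomials $\E[\prod_s X_{u_s}X_{v_s}]$ fail to factorize. Writing
\[
\E\left[(T_{n,K,M}^+)^a (T_{n,K,M}^\pm)^b (T_{n,K,M}^-)^c\right]
=\sum \E\left[\prod_{s=1}^{a}X_{u_s,M}X_{v_s,M}\prod_{s=1}^{b}X_{u_s',M}X_{v_s',M}\prod_{s=1}^{c}X_{u_s'',M}X_{v_s'',M}\right],
\]
where the first sum runs over edges of $G_{n,K}^+$, the second over edges between $V_{G_n,K}^+$ and $V_{G_n,K}^-$, and the third over edges of $G_{n,K}^-$. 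Each term equals $p_n^{|V(H)|}$ (after dropping the truncation on the low-degree side, valid for $n,K$ large by Observation~\ref{obs:label}), where $H$ is the union graph of the chosen edges. The term factorizes exactly as in the desired product unless $H$ has a vertex in $V_{G_n,K}^-$ that is shared between an edge coming from the $G_{n,K}^\pm$ block (or, impossibly, the $G_{n,K}^+$ block) and an edge coming from the $G_{n,K}^-$ block — i.e.\ unless the ``cross'' part and the ``$-$'' part of $H$ are not vertex-disjoint. So the error is bounded by a sum of $N(H,G_n)\,p_n^{|V(H)|}$ over connected (or suitably connected) configurations $H$ that straddle the two sides.

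The key step is then a counting estimate showing this straddling sum is $o(1)$ under the iterated limit. The point is that any such $H$ must contain a path from a vertex of $V_{G_n,K}^+$ into $V_{G_n,K}^-$ and back out to a shared low-degree vertex, or more simply: a connected component of $H$ that meets $V_{G_n,K}^-$ through a $\pm$-edge and also contains a $(-)$-edge at that same low-degree vertex. I would bound $N(H,G_n)$ for such components exactly as in Lemma~\ref{lm1}: embed the component greedily, paying $|E(G_n^{\pm})|\le |E(G_n)|\lesssim r_n^2$ for the first cross edge and then a factor $\max_{v\in V_{G_n,K}^-}d_v\le \varepsilon r_n$ for each additional vertex sitting on the low-degree side, using Observation~\ref{obs:label} to make $d_{\ceil{Kr_n}+1}\le \varepsilon r_n$ once $n,K$ are large. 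Because a straddling component has at least one low-degree vertex of degree counted this way that is "extra" relative to a pure disjoint-union configuration, each such $H$ contributes a power $\varepsilon^{\ge 1}$, and since $\cH_{a+b+c}$ is a finite index set free of $n$, the whole sum is $\lesssim_{a,b,c}\varepsilon$. Letting $n\to\infty$, then $K\to\infty$, then $\varepsilon\downarrow 0$ gives the claim; the uniform moment bounds from Lemma~\ref{lm1} (and the analogous trivial bound $T_{n,K,M}^{+},T_{n,K,M}^{\pm}=O_P(1)$ in moments, which follows by the same counting) guarantee all the sums are finite and the interchange of limits is legitimate.

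The main obstacle I anticipate is organizing the bookkeeping of which union graphs $H$ straddle the partition and getting the clean inequality that every straddling connected component forces at least one extra factor of $\varepsilon$ (equivalently, $|V(H)|>2\nu(H)$ restricted to the straddling components, or a genuine degree-factor saving). One has to be careful that a $\pm$-edge has \emph{one} endpoint on each side, so a component made only of $\pm$-edges is already bipartite across the partition and could in principle be a "matching" across the cut contributing $p_n^{|V(H)|}=p_n^{2|E(H)|}$ with no saving — but such a component is entirely inside $T_{n,K,M}^\pm$ and hence already accounted for in the factorized product; the genuine error terms are precisely those where a $\pm$-edge and a $(-)$-edge share their $V_{G_n,K}^-$-endpoint, and there the shared low-degree vertex is the source of the $\varepsilon$ saving. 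Handling the remaining (convergent, non-straddling) terms is then routine: they reassemble, up to $o(1)$, into $\E[(T_{n,K,M}^+)^a(T_{n,K,M}^\pm)^b]\,\E[(T_{n,K,M}^-)^c]$ because the low-degree vertices carrying the $(-)$-edges are disjoint from everything in the $+/\pm$ part, so the corresponding Bernoulli variables are independent and the expectation splits.
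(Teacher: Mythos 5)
Your proposal matches the paper's proof in essentially every respect: expand both the joint moment and the product of moments by multinomial, observe that factorization is exact on the set of tuples where the $V_{G_n,K}^-$-endpoints of the $\pm$-edges are disjoint from the vertices of the $(-)$-edges, and bound the remaining tuples (which form a $K_{1,2}$ with center in $V_{G_n,K}^-$ joining a $\pm$-edge to a $(-)$-edge) by the counting estimate that gains a factor $d_{\ceil{Kr_n}+1}/r_n \to 0$ via Observation~\ref{obs:label}. You have correctly identified the key subtlety that a $\pm$-edge alone (one endpoint on each side) contributes no error, and that both the joint moment and the product of marginals have a non-factorizing part that must be shown to vanish. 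The paper's writeup is slightly more careful in organizing the counting when the union graph $H$ is disconnected (bounding $N_{a,b,c}(H,G_n[V_M])$ by a product over components, with the $\varepsilon$-saving localized to the one component containing the straddling $K_{1,2}$) and in explicitly verifying both limits~\eqref{suff11} and~\eqref{suff12}, but these are expository details rather than mathematical gaps in your plan.
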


\begin{proof} Note that there is nothing to prove if $c=0$. Moreover, since $T_{n,K,M}^+$ and $T_{n,K,M}^-$ are independent for each $n$ and $K$ (they are defined on disjoint sets of vertices of $G_n$), the case $b=0$ follows trivially. Therefore, we assume $b$ and $c$ are both positive.

Let $\cE_{n,K}^{a, b, c}$ be the collection of $(a+b+c)$-tuples of the form $$\bm e=((u_1, v_1),\ldots ,(u_a, v_a), (u_1', v_1'), \ldots, (u_b', v_b') ,(u_1'', v_1''), \ldots , (u_c'', v_c'')),$$ where  
$(u_1, v_1),\ldots ,(u_a, v_a) \in E(G_{n, K}^+)$, $(u_1', v_1'),\ldots ,(u_b', v_b') \in E(G_{n, K}^\pm)$ (with $u_1',\ldots ,u_b' \in V_{G_n, K}^+$ and $v_1',\ldots , v_b' \in V_{G_n, K}^-$), and $(u_1'', v_1''),\ldots ,(u_c'', v_c'') \in E(G_{n, K}^-)$. Further, define $\cD_{n,K}^{a, b, c}$ as the set of all $\bm e \in \cE_{n,K}^{a, b, c}$ such that the sets $\{v_1', \ldots, v_b'\}$ and $\{u_1'', v_1'',\ldots ,u_a'', v_a''\}$ are disjoint. Then 
\begin{align}\label{eq:abc1}
&\E\left[(T_{n,K,M}^+)^a (T_{n,K, M}^\pm)^b (T_{n,K,M}^-)^c\right]\nonumber\\
&= \sum_{\bm e \in \cE_{n,K}^{a, b, c}} \E\left[ \prod_{s=1}^a X_{u_s, M} X_{v_s, M}  \prod_{s=1}^b X_{u_s', M} X_{v_s', M}   \prod_{s=1}^c X_{u_s'', M} X_{v_s'', M}  \right]\nonumber\\
&= \sum_{\bm e \in \cD_{n,K}^{a, b, c}} \E\left[ \prod_{s=1}^a X_{u_s, M} X_{v_s, M}  \prod_{s=1}^b X_{u_s', M} X_{v_s', M}  \right] \E \left[ \prod_{s=1}^c X_{u_s'', M} X_{v_s'', M}  \right]  \nonumber \\
&\;\;\;\;\;\;\;\;\;\;\;\;\;\;\;\;\;\;\;\;\;\;\;\;\;+\sum_{\bm e \in \cE_{n,K}^{a, b, c}\backslash \cD_{n,K}^{a, b, c}} \E\left[ \prod_{s=1}^a X_{u_s, M} X_{v_s, M}  \prod_{s=1}^b X_{u_s', M} X_{v_s', M}   \prod_{s=1}^c X_{u_s'', M} X_{v_s'', M}  \right]. 
\end{align}
On the other hand,
\begin{align}\label{eq:abc2}
&\E \left[(T_{n,K,M}^+)^a (T_{n,K, M}^\pm)^b\right] \E\left[(T_{n,K,M}^-)^c\right] \nonumber \\ 
&= \sum_{\bm e \in \cE_{n,K}^{a, b, c}} \E\left[ \prod_{s=1}^a X_{u_s, M} X_{v_s, M}  \prod_{s=1}^b X_{u_s', M} X_{v_s', M}  \right] \E\left[ \prod_{s=1}^c X_{u_s'', M} X_{v_s'', M}  \right] \nonumber\\
&= \sum_{\bm e \in \cD_{n,K}^{a, b, c}} \E\left[ \prod_{s=1}^a X_{u_s, M} X_{v_s, M}  \prod_{s=1}^b X_{u_s', M} X_{v_s', M}  \right] \E\left[ \prod_{s=1}^c X_{u_s'', M} X_{v_s'', M}  \right] \nonumber\\
&\;\;\;\;\;\;\;\;\;\;\;\;\;\;\; + \sum_{\bm e \in \cE_{n,K}^{a, b, c}\backslash \cD_{n,K}^{a, b, c}} \E\left[ \prod_{s=1}^a X_{u_s, M} X_{v_s, M}  \prod_{s=1}^b X_{u_s', M} X_{v_s', M}  \right] \E\left[ \prod_{s=1}^c X_{u_s'', M} X_{v_s'', M}  \right].
\end{align}
By taking the difference of \eqref{eq:abc1} and \eqref{eq:abc2} it follows that, in order to prove the lemma, it suffices to show the following two statements:
\begin{equation}\label{suff11}
\lim_{K \rightarrow \infty} \limsup_{n \rightarrow \infty} \sum_{\bm e \in \cE_{n,K}^{a, b, c}\backslash \cD_{n,K}^{a, b, c}} \E\left[ \prod_{s=1}^a X_{u_s, M} X_{v_s, M}  \prod_{s=1}^b X_{u_s', M} X_{v_s', M}   \prod_{s=1}^c X_{u_s'', M} X_{v_s'', M}  \right] = 0, 
\end{equation}
and 	 	
\begin{equation}\label{suff12}
\lim_{K \rightarrow \infty} \limsup_{n \rightarrow \infty} \sum_{\bm e \in \cE_{n,K}^{a, b, c}\backslash \cD_{n,K}^{a, b, c}} \E\left[ \prod_{s=1}^a X_{u_s, M} X_{v_s, M}  \prod_{s=1}^b X_{u_s', M} X_{v_s', M}  \right] \E\left[ \prod_{s=1}^c X_{u_s'', M} X_{v_s'', M}  \right] = 0.
\end{equation}

To this end, define $\cE_{n, K, M}^{a, b, c}$ to be set of all $\bm e \in \cE_{n, K}^{a, b, c}$, such that the following three conditions hold: (1) $\max\{d_{u_s}, d_{v_s}\} \leq  M r_n,~\textrm{for all}~s \in [a]$, (2) $\max\{d_{u_s'}, d_{v_s'}\} \leq  M r_n,~\textrm{for all}~s \in [b]$, and (3) $\max\{d_{u_s}'', d_{v_s}''\} \leq  M r_n,~\textrm{for all}~s \in [c]$. Let $\cD_{n,K, M}^{a,b,c} = \cE_{n, K, M}^{a, b, c} \bigcap \cD_{n,K}^{a,b,c} $. Then, (\ref{suff11}) becomes:
\begin{equation}\label{suff101}
\lim_{K \rightarrow \infty} \limsup_{n \rightarrow \infty}  \sum_{\bm e \in \cE_{n,K, M}^{a, b, c}\backslash \cD_{n,K, M}^{a, b, c}} \E\left[ \prod_{s=1}^a X_{u_s} X_{v_s}  \prod_{s=1}^b X_{u_s'} X_{v_s'}   \prod_{s=1}^c X_{u_s''} X_{v_s''}  \right] = 0, 
\end{equation}

If $H$ is the graph formed by the union of the edges $(u_1, v_1),\ldots ,(u_a, v_a)$, $(u_1', v_1'), \ldots, (u_b', v_b')$, $(u_1'', v_1''), \ldots , (u_c'', v_c'')$, then 
$$\E\left[ \prod_{s=1}^a X_{u_s} X_{v_s}  \prod_{s=1}^b X_{u_s'} X_{v_s'}   \prod_{s=1}^c X_{u_s''} X_{v_s''}  \right]=p_n^{|V(H)|}.$$
Note that this graph $H$ must have at  least two edges $(u_i', v_i')$ and $(u_j'', v_j'')$, such that $v_i'=u_j''$ or $v_i' = v_j''$, since for any $\bm e \in  \cE_{n,K, M}^{a, b, c}\backslash \cD_{n,K, M}^{a, b, c}$, the set $\{v_1', \ldots, v_b'\}$  intersect the set $\{u_1'', v_1'',\ldots ,u_a'', v_a''\}$, that is, $H$ has a 2-star $K_{1, 2}$, with central vertex in $V_{G_n,K}^-$. Let $V_{M}:=\{v \in V(G_n): d_v \leq Mr_n\}$ and 
$N_{a, b, c}(H, G_n[V_M])$ be the number ways of forming a graph isomorphic to $H$, with $a$ edges from $G_{n, K}^+[V_{M}]$, $b$ edges from $G_{n, K}^\pm[V_{M}]$, and $c$ edges from $G_{n, K}^-[V_{M}]$, such that the resulting graph contains a $K_{1, 2}$, with central vertex in $V_{G_n,K}^-$ (and one edge in $E(G_{n, K}^-[V_{M}])$ and the other in $E(G_{n, K}^\pm[V_{M}])$). Then
\begin{align}\label{eq:NH_III}
\sum_{\bm e \in \cE_{n,K, M}^{a, b, c}\backslash \cD_{n,K, M}^{a, b, c}} \E\left[ \prod_{s=1}^a X_{u_s} X_{v_s}  \prod_{s=1}^b X_{u_s'} X_{v_s'}   \prod_{s=1}^c X_{u_s''} X_{v_s''}  \right] \lesssim \sum_{H \in \cH_{a,b,c}} N_{a, b, c}(H, G_n[V_M]) p_n^{|V(H)|},
\end{align}
where $\cH_{a,b,c}$ is the set of all non-isomorphic graphs with at most $2(a+b+c)$ vertices, none of which is isolated, which contains at least one $K_{1, 2}$ (the 2-star) as a subgraph.

Now, we proceed to bound $N_{a, b, c}(H, G_n[V_M])$: Note that for any connected $F \in \cH_{a, b, c}$, 
\begin{align}
N_{a,b,c}(F, G_n[V_M])  \lesssim_{a, b, c}  |E(G_{n}[V_M])| (M r_n)^{|V(F)|-2} 
 \leq |E(G_n)| (M r_n)^{|V(F)|-2} 
\label{eq:NHabc} \lesssim_{M, F}  r_n^{|V(F)|}, 
\end{align} 
using \eqref{eq:ETn}.  Now, suppose $H\in \cH_{a, b, c}$ has connected components $H_1, H_2, \ldots, H_{\nu(H)}$, and without loss of generality, assume $H_1$ has a 2-star $K_{1, 2}$, with central vertex in $V_{G_n, K}^-$ (and one edge in $E(G_{n, K}^-[V_{M}])$ and the other in $E(G_{n, K}^\pm[V_{M}])$). Therefore, choosing this 2-star in at most $|E(G_{n}[V_M])|\cdot \max\{d_v: v \in V_{G_n, K}^- \}$ ways and each of the remaining $|V(H_1)|-3$ vertices in at most $M r_n$ ways gives the bound 
\begin{align}
N_{a, b, c}(H_1, G_n[V_M]) & \lesssim_{a, b, c} |E(G_{n}[V_M])| \left(\max_{v \in V_{G_n, K}^-} d_v\right) (M r_n)^{(|V(H_1)|-3)}  \nonumber \\
\nonumber& \lesssim_{a,b,c,M} |E(G_n)| r_n^{(|V(H_1)|-3)} \left(\max_{v \in V_{G_n, K}^-} d_v\right) \\ 
\label{eq:NH1abc} & \lesssim_{a,b,c,M} r_n^{(|V(H_1)|-1)} d_{\ceil{Kr_n}+1}, 
\end{align}
using \eqref{eq:ETn}. Now, combining \eqref{eq:NHabc} and \eqref{eq:NH1abc} gives 
\begin{align}\label{eq:NHabcII}
N_{a, b, c}(H, G_n[V_M]) \leq \prod_{j=1}^{\nu(H)} N_{a, b, c}(H_j, G_n[V_M]) \lesssim_{a, b, c, M, H} r_n^{(|V(H)|-1)}  d_{\ceil{Kr_n}+1}.
\end{align}
This implies 
$$\lim_{K\rightarrow \infty}\lim_{n \rightarrow \infty}\frac{1}{r_n^{|V(H)|}}N_{a, b, c}(H, G_n[V_M])=0,$$ by Observation \ref{obs:label}. Thus \eqref{suff101} follows, because the sum in the right hand side of \eqref{eq:NH_III} is over a finite set ($|\cH_{a,b,c}\big| \leq 2^{\binom{2(a+b+c)}{2}}$). The limit in  (\ref{suff12}) follows similarly, completing the proof of the lemma.
\end{proof}

\subsection{Moment Approximation for $T_{n,K,M}^\pm$}
\label{sec:pfquadratic3}

Let $\{J_{uv}\}_{(u, v)\in E(G_{n, K}^\pm)}$ be a collection of independent Bernoulli$(p_n)$ random variables, independent of the  collection $\{X_v\}_{v \in V(G_n)}$. Define 
\begin{align}\label{eq:Z}
Z_{n,K,M} = \sum_{u \in  V_{G_n, K}^+} \sum_{v \in  V_{G_n, K}^-} a_{uv}(G_n)  J_{uv} X_{u,M}.
\end{align}

\begin{lem}\label{lm:TZ} Fix non-negative integers $a, b$ and $M > 0$. Then under the assumptions of Theorem \ref{thm:poisson_quadratic}, 
$$\lim_{K \to \infty} \limsup_{n\to \infty} \Big|\E\left[(T_{n,K,M}^+)^a (T_{n,K,M}^\pm)^b\right] - \E \left[(T_{n,K,M}^+)^a Z_{n,K,M}^b\right] \Big| = 0.$$ 
Moreover, $\limsup_{K \rightarrow \infty} \limsup_{n \rightarrow \infty} \E \left[(T_{n,K,M}^+)^a Z_{n,K,M}^b\right] \lesssim_{a, b, M} 1$.
\end{lem}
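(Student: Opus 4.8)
The plan is to run the same moment-expansion machinery as in Lemma \ref{lm1}, now tracking the joint moments involving the $+$ and $\pm$ blocks. Expand $\E[(T_{n,K,M}^+)^a (T_{n,K,M}^\pm)^b]$ and $\E[(T_{n,K,M}^+)^a Z_{n,K,M}^b]$ as sums over $(a+b)$-tuples of edges, $a$ of them in $E(G_{n,K}^+)$ and $b$ of them in $E(G_{n,K}^\pm)$. For the first expression, each tuple contributes $\E\big[\prod_{s=1}^a X_{u_s,M}X_{v_s,M}\prod_{s=1}^b X_{u_s',M}X_{v_s',M}\big]$, while for the second it contributes $\E\big[\prod_{s=1}^a X_{u_s,M}X_{v_s,M}\prod_{s=1}^b J_{u_s'v_s'}X_{u_s',M}\big]$, because in $Z_{n,K,M}$ the low-degree endpoint $v_s'\in V_{G_n,K}^-$ of each cross-edge has been replaced by an independent $\dBer(p_n)$ variable $J_{u_s'v_s'}$. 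Truncating to the event that all relevant degrees are at most $Mr_n$ (so that $X_{v,M}=X_v$), each summand becomes a pure power $p_n^{\#\{\text{distinct vertices seen}\}}$, and the difference between the two expansions is governed by tuples for which the two counting conventions disagree.

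The key observation is that the two expansions agree term-by-term on any tuple for which the low-degree endpoints $v_1',\dots,v_b'$ of the $b$ cross-edges are all distinct from each other and from all of $u_1,v_1,\dots,u_a,v_a$: in that case $\prod_s J_{u_s'v_s'}$ and $\prod_s X_{v_s'}$ have the same expectation $p_n^{b}$ (up to the bookkeeping of coincidences among the $u_s'$, which both sides handle identically since those vertices lie in $V_{G_n,K}^+$ and are untouched by the substitution). Hence the difference is bounded by a sum over "bad" tuples in which some $v_i'$ coincides with another vertex of the configuration. As in Lemma \ref{lm:independent_moment}, every bad configuration has its graph $H$ containing a $K_{1,2}$ with central vertex in $V_{G_n,K}^-$ (namely at the coincident low-degree vertex), so the counting bound \eqref{eq:NHabcII} applies verbatim: $N(H,G_n[V_M])\lesssim_{a,b,M,H} r_n^{|V(H)|-1}d_{\ceil{Kr_n}+1}$, and since $\E[\prod X] = p_n^{|V(H)|}$, each such term is $\lesssim d_{\ceil{Kr_n}+1}/r_n = o(1)$ under the double limit by Observation \ref{obs:label}. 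Summing over the finite family $\cH_{a,b}$ of relevant graphs gives the first assertion. For the moment bound, run the one-sided estimate: every connected component of the configuration graph $H$ (with at most $2(a+b)$ vertices, no isolated vertex) satisfies $N(H_j,G_n[V_M])\lesssim_{a,b,M} r_n^{|V(H_j)|}$ by \eqref{eq:NHabc}, so $\E[(T_{n,K,M}^+)^a Z_{n,K,M}^b]\lesssim_{a,b,M} p_n^{|V(H)|}r_n^{|V(H)|}$ summed over the finite index set $\cH_{a,b}$, which is $\lesssim_{a,b,M} 1$.

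The main obstacle is purely bookkeeping: one must carefully separate the roles of the high-degree endpoints $u_s'$ (shared between $T^\pm$ and $Z$, hence contributing identically on both sides) from the low-degree endpoints $v_s'$ (present on the $T^\pm$ side as $X_{v_s'}$, replaced by the independent $J_{u_s'v_s'}$ on the $Z$ side), and verify that a tuple is "good" precisely when the $v_s'$ are mutually distinct and disjoint from the $+$-configuration — so that the $J$'s and the $X$'s have matching expectations — while every "bad" tuple forces a $2$-star centered in $V_{G_n,K}^-$. Once that combinatorial dictionary is set up, all the graph-counting estimates are already available from the proofs of Lemma \ref{lm1} and Lemma \ref{lm:independent_moment}, and only need to be quoted.
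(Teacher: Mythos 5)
Your overall strategy matches the paper's: expand both quantities as sums over $(a+b)$-tuples of edges, identify the tuples on which the two expectations agree, and bound the remaining ``bad'' tuples by the $K_{1,2}$-counting estimate and Observation~\ref{obs:label}. However, your characterization of when the two sides agree is off, and the resulting claim about bad tuples is false.

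You take ``good'' to mean that $v_1',\dots,v_b'$ are all pairwise distinct, and then assert that every bad tuple forces a $K_{1,2}$ centered at a coincident low-degree vertex. Consider a tuple in which the cross-edge $(u_1',v_1')$ is repeated (i.e.\ $u_1'=u_2'$ and $v_1'=v_2'$). By your definition this tuple is bad, since $v_1'=v_2'$. But the configuration graph $H$ only contains the single cross-edge $(u_1',v_1')$, so $v_1'$ has degree one in $H$ and there is no $2$-star with central vertex in $V_{G_n,K}^-$. The counting bound \eqref{eq:NHabcII} therefore does not apply to this tuple, and without it you cannot show the contribution is $o(1)$; the generic bound only gives $N(H,G_n[V_M])\lesssim r_n^{|V(H)|}$, i.e.\ $O(1)$ per term, which is not enough.

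The fix is to match what the paper actually does: compare the \emph{exponents} directly. Writing the $T^\pm$-side contribution as $p_n^{|V(H_1\cup H_2)|}$ and the $Z$-side contribution as $p_n^{|\{u_1,v_1,\dots,u_a,v_a,u_1',\dots,u_b'\}|+|E(H_2)|}$, the two agree whenever $|E(H_2)|=|\{v_1',\dots,v_b'\}|$, which holds iff every coincidence $v_s'=v_t'$ is accompanied by $u_s'=u_t'$. Repeated edges are therefore ``good'' (the contributions cancel); the genuinely bad tuples are exactly those with $v_s'=v_t'$ but $u_s'\ne u_t'$, and these do contain a $K_{1,2}$ at $v_s'$, so \eqref{eq:NHabcII} applies. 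The rest of your argument, including the moment bound via \eqref{eq:NHabc}, is fine once this case is handled correctly.
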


\begin{proof} Note that there is nothing to prove if $b=0$, so we  assume that $b > 0$. Let $\cM_{n,K}^{a, b}$ the collection of $(a+b)$-tuples of the form $$\bm e=((u_1, v_1),\ldots ,(u_a, v_a), (u_1', v_1'), \ldots, (u_b', v_b')),$$ where  
$(u_1, v_1),\ldots ,(u_a, v_a) \in E(G_{n, K}^+[V_M])$ and $(u_1', v_1'),\ldots ,(u_b', v_b') \in E(G_{n, K}^\pm[V_M])$ (with $u_1',\ldots ,u_b' \in V_{G_n, K}^+$ and $v_1',\ldots , v_b' \in V_{G_n, K}^-$).

\begin{align}\label{eq:diff_Z}
&\left|\E\left[(T_{n,K,M}^+)^a (T_{n,K,M}^\pm)^b\right] - \E \left[(T_{n,K,M}^+)^a Z_{n,K,M}^b\right]\right| \nonumber\\
& \leq  \sum_{\bm e \in \cM_{n,K}^{a, b}} \left|\E\left[ \prod_{s=1}^a X_{u_s} X_{v_s}  \prod_{s=1}^b X_{u_s'} X_{v_s'}   \right] - \E\left[ \prod_{s=1}^a X_{u_s} X_{v_s}   \prod_{s=1}^b X_{u_s'} \prod_{s=1}^b J_{u_s'v_s'}   \right] \right| \nonumber \\
& =  \sum_{\bm e \in \cM_{n,K}^{a, b}} \left| p_n^{|V(H_1 \bigcup H_2)|}-  p_n^{|\{u_1, v_1, u_2, v_2, \ldots, u_a, v_a, u_1', u_2', \ldots, u_b'\}|+|E(H_2)|} \right|,
\end{align}
where $H_1$ is the graph formed by the union of the edges $(u_1, v_1),\ldots ,(u_a, v_a)$, and  $H_2$  is the graph formed by the union of the edges $(u_1', v_1'),\ldots, (u_b', v_b')$.

Note that 
\begin{align}\label{eq:VHbound}
|\{u_1, v_1, &u_2, v_2, \ldots, u_a, v_a, u_1', u_2', \ldots, u_b'\}|+|E(H_2)| \nonumber \\
& = |V(H_1 \bigcup H_2)|-|\{v_1', v_2', \ldots, v_b'\}|+|E(H_2)| \geq |V(H_1 \bigcup H_2)|,
\end{align}
using $|E(H_2)|\geq |\{v_1', v_2', \ldots, v_b'\}|$, since each distinct element in $\{v_1', v_2', \ldots, v_b'\}$ contributes an edge to $E(H_2)$. This implies 
\begin{align*} 
\left|\E\left[(T_{n,K,M}^+)^a (T_{n,K,M}^\pm)^b\right] - \E \left[(T_{n,K,M}^+)^a Z_{n,K,M}^b\right]\right|  \leq \sum_{\bm e \in \overline{\cM}_{n,K}^{a, b}}  p_n^{|V(H_1 \bigcup H_2)|},
\end{align*}
where $\overline{\cM}_{n,K}^{a, b} \subseteq \cM_{n,K}^{a, b}$  is the collection of all tuples in $\cM_{n,K}^{a, b}$ such that $$|\{u_1, v_1, u_2, v_2, \ldots, u_a, v_a, u_1', u_2', \ldots, u_b'\}|+|E(H_2)|> |V(H_1 \bigcup H_2)|.$$  

Now, suppose that for every $s$, $t \in [b]$ such that $v_s' = v_t'$, we also have $u_s' = u_t'$. Then, $|E(H_2)| = |\{v_1',\ldots,v_b'\}|$, and hence, equality holds in \eqref{eq:VHbound}. Therefore, $e \in \overline{\cM}_{n,K}^{a, b}$ implies that there exist $s,t \in [b]$ such that $v_s' = v_t'$ and $u_s' \neq u_t'$, that is, the graph $H:= H_1\bigcup H_2$ must have a $K_{1, 2}$ with central vertex in $V_{G_n, K}^-$. Recall that $V_{M}:=\{v \in V(G_n): d_v \leq Mr_n\}$ and denote by $N_{a, b}(H, G_n[V_M])$ the number of ways of forming a graph isomorphic to $H$, with $a$ edges from $G_{n, K}^+[V_{M}]$ and $b$ edges from $G_{n, K}^\pm[V_{M}]$, such that the result graph contains a $K_{1, 2}$, with central vertex in $V_{G_n,K}^-$ (and both edges in $E(G_{n, K}^\pm[V_{M}])$). Then
\begin{align}\label{eq:diff_H}
\left|\E\left[(T_{n,K,M}^+)^a (T_{n,K,M}^\pm)^b\right] - \E \left[(T_{n,K,M}^+)^a Z_{n,K,M}^b\right]\right|  \lesssim_{a,b} \sum_{H \in \cH_{a, b}} N_{a, b}(H, G_n[V_M]) p_n^{|V(H)|},
\end{align}
where $\cH_{a,b}$ is the set of all non-isomorphic graphs with at most $2(a+b)$ vertices, none of which is isolated, which contains at least one $K_{1, 2}$ (the 2-star) as a subgraph. Now, as in \eqref{eq:NHabcII},
$$N_{a, b}(H, G_n[V_M]) \lesssim_{a, b, M, H} r_n^{(|V(H)|-1)} d_{\ceil{K r_n}+1}.$$  This implies 
$$\lim_{K\rightarrow \infty}\lim_{n \rightarrow \infty}\frac{1}{r_n^{|V(H)|}}N_{a, b}(H, G_n[V_M])=0,$$ by Observation \ref{obs:label}. This completes the proof of the lemma, because the sum in the right hand side of \eqref{eq:diff_H} is over a finite set ($|\cH_{a,b}\big| \leq 2^{\binom{2(a+b)}{2}}$).

Finally, by similar arguments as above and \eqref{eq:diff_H}, 
$$\E \left[(T_{n,K,M}^+)^a Z_{n,K,M}^b\right]  \leq \sum_{H \in \cH_{a, b}} N_{a, b}(H, G_n[V_M]) p_n^{|V(H)|} \lesssim_{a, b, M} 1,$$
using the bound \eqref{eq:NHabc} and because the sum is over a finite set. 
\end{proof}

Combining the above results, we get the following proposition which shows that $(T_{n,K,M}^+, T_{n,K,M}^\pm, T_{n,K,M}^-)$ and $((T_{n,K, M}^+, Z_{n,K,M}), W_{n, K}^-)$ are close in moments. 
 	 
\begin{ppn}\label{ppn:abc}
Fix non-negative integers $a, b, c$ and for $M > 0$. Then under the assumptions of Theorem \ref{thm:poisson_quadratic}, 
$$\lim_{K \to \infty} \limsup_{n\to \infty} \left| \E\left[(T_{n,K,M}^+)^a (T_{n,K,M}^\pm)^b (T_{n,K,M}^-)^c\right]- \E\left[(T_{n,K, M}^+)^a Z_{n,K, M}^b\right]\E\left[(W_{n, K}^-)^c\right]\right| = 0.$$
\end{ppn}
	 	
\begin{proof} Note that 
\begin{align*}
\left| \E\left[(T_{n,K,M}^+)^a (T_{n,K,M}^\pm)^b (T_{n,K,M}^-)^c\right]- \E\left[(T_{n,K, M}^+)^a Z_{n,K, M}^b\right]\E\left[(W_{n, K}^-)^c\right]\right| \leq T_1+T_2+T_3,
\end{align*} 
where 
\begin{eqnarray*}
T_1&:=& \left|\E\left[(T_{n,K,M}^+)^a (T_{n,K,M}^\pm)^b (T_{n,K,M}^-)^c\right] - \E \left[(T_{n,K,M}^+)^a (T_{n,K,M}^\pm)^b\right] \E\left[(T_{n,K,M}^-)^c\right] \right| \\
T_2&:=& \left|  \E \left[(T_{n,K,M}^+)^a (T_{n,K,M}^\pm)^b\right] \E\left[(T_{n,K,M}^-)^c\right]   - \E\left[(T_{n,K, M}^+)^a Z_{n,K, M}^b\right]\E\left[(T_{n,K,M}^-)^c\right]\right|  \\
T_3 &:=& \left| \E\left[(T_{n,K, M}^+)^a Z_{n,K, M}^b\right] \E\left[(T_{n,K,M}^-)^c\right]  - \E\left[(T_{n,K, M}^+)^a Z_{n,K, M}^b\right] \E\left[(W_{n, K}^-)^c\right] \right|. 
\end{eqnarray*}			
Now, $T_1$ goes to zero (under the double limit) by Lemma \ref{lm:independent_moment}, $T_2$ goes to zero by Lemma \ref{lm:TZ} (and using $\limsup_{K \rightarrow \infty} \limsup_{n \rightarrow \infty}\E \left[({T_{n,K,M}^-})^a\right] \lesssim_a 1$ by Lemma \ref{lm1}), and $T_3$ goes to zero by Lemma \ref{lm1} (and using $\limsup_{K \rightarrow \infty} \limsup_{n \rightarrow \infty} \E \left[(T_{n,K,M}^+)^a Z_{n,K,M}^b\right]  \lesssim_{a, b, M} 1$ by Lemma \ref{lm:TZ}). 
\end{proof}

\subsection{Convergence of Moments and Existence of Limiting Distribution}
\label{sec:pfquadratic4}

Recall from \eqref{eq:Z}
\begin{align}\label{eq:Z_I}
Z_{n,K,M} = \sum_{u \in  V_{G_n, K}^+} \sum_{v \in  V_{G_n, K}^-} a_{uv}(G_n)  J_{uv} X_{u,M}\stackrel{D}=  \sum_{u \in  V_{G_n, K}^+} J_u X_{u, M},
\end{align}
where $J_u := \sum_{v \in  V_{G_n, K}^-} a_{uv}(G_n)  J_{uv} \sim \dBin(d_u^\pm, p_n)$ and $d_u^\pm:=\sum_{v \in  V_{G_n, K}^-} a_{uv}(G_n)$, is the number of edges between $u \in V_{G_n, K}^+$ and some vertex in $V_{G_n, K}^-$. Note that, by definition, $\{J_u\}_{u \in V_{G_n, K}^+}$ is a collection of independent Binomial random variables, independent of $\{X_v\}_{v \in V(G_n)}$. Next, define,  $V_{n, K, M}^+ := \{ v \in V_{G_n,K}^+ : d_v\leq M r_n\}$. Then \eqref{eq:Z} becomes (recall $X_{v, M} = X_v \bm{1}\{d_v \leq M r_n\}$ and $p_n=1/r_n$), 
\begin{align}\label{eq:Z_II}
Z_{n,K,M} =  \sum_{u \in  V_{n, K, M}^+} J_u X_u \sim \dBin\left(\sum_{u \in V_{n, K, M}^+} d_u^{\pm} X_u, \frac{1}{r_n}\right). 
\end{align} 
Define $Y_{n,K,M} = \frac{1}{r_n} \sum_{u \in V_{n, K, M}^+} d_u^{\pm} X_u$. The lemma below shows the existence of the limiting mixed moments of $(T_{n,K,M}^+, Y_{n,K,M})$. We begin with the following definition. Hereafter, we will assume that $K \geq 1$ is an integer.  Also, denote by $\cW_K$ the set of all symmetric measurable functions from $[0, K]^2 \rightarrow [0, 1]$. With these definitions, we now have the convergence of the mixed moments.

\begin{lem}\label{lm:moment_limit}
Fix $K \geq 1$ and $M>0$. Suppose there exist functions $W_K \in \cW_K$, $d_K: [0, K] \mapsto [0,\infty)$, and measure-preserving bijections $\{\phi_{n, K}\}_{n=1}^\infty$ from $[0,K] \rightarrow [0, K]$, such that 
\begin{align}\label{eq:W_phi_K}
\lim_{n \rightarrow \infty}||W_{G_{n}}^{\phi_{n, K}}-W_K||_{\square([0, K]^2)}=0 
\end{align} 
and 
\begin{align}\label{eq:L1_M}
\lim_{n \rightarrow \infty}\int_0^K \left| d_{W_{G_n}}^{\phi_{n, K}}(u) \bm 1\{d_{W_{G_n}}^{\phi_{n, K}}(u) \leq M\} - d_K(u) \bm 1\{d_K(u) \leq M\} \right| \mathrm d u=0,
\end{align}
where $W_{G_{n}}^{\phi_{n, K}}(x, y):=W_{G_n}(\phi_{n, K}(x), \phi_{n, K}(y)) \bm 1\{ x, y \in [0, K]\}$ and  $d_{W_{G_n}}^{\phi_{n, K}}(x) := d_{W_{G_n}}(\phi_{n, K}(x)) \bm 1\{x \in [0, K]\}$. Then $\mu_{a, b, K, M}:=\lim_{n\to \infty} \E\left[(T_{n,K,M}^+)^a (Y_{n,K,M})^b\right]$ exists and is finite, for all non-negative integers $a, b$.\footnote{
In the proof of Theorem \ref{thm:poisson_quadratic}, this lemma will be used with $\phi_{n, K}$ as the identity map from $[0, K] \rightarrow [0, K]$, for all $n$. However, we will need the lemma in its generality for proving Theorem \ref{thm:distributional_limit}.} 
\end{lem}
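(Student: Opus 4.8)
The goal is to show that $\mu_{a,b,K,M} := \lim_{n\to\infty}\E[(T_{n,K,M}^+)^a (Y_{n,K,M})^b]$ exists and is finite. The plan is to expand the mixed moment combinatorially as a sum over tuples of edges/vertices, group the terms by the isomorphism type of the underlying multigraph, and show that for each type the corresponding (normalized) subgraph count converges, with the limit expressible as an integral of the limiting objects $W_K$ and $d_K$. Since the index set of isomorphism types is finite for fixed $a,b$, term-by-term convergence yields convergence of the whole sum.

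First I would set up the expansion. Writing $T_{n,K,M}^+ = \sum_{(u,v)\in E(G_{n,K}^+[V_M])} X_u X_v$ (using $X_{u,M}=X_u$ on $V_M$) and $Y_{n,K,M} = \frac{1}{r_n}\sum_{u\in V_{n,K,M}^+} d_u^{\pm} X_u$, the quantity $\E[(T_{n,K,M}^+)^a (Y_{n,K,M})^b]$ becomes $r_n^{-b}$ times a sum over $a$ edges $(u_s,v_s)\in E(G_{n,K}^+[V_M])$ and $b$ vertices $w_1,\dots,w_b\in V_{n,K,M}^+$ of the quantity $\bigl(\prod_{t=1}^b d_{w_t}^{\pm}\bigr)\,\E\bigl[\prod_s X_{u_s}X_{v_s}\prod_t X_{w_t}\bigr] = \bigl(\prod_t d_{w_t}^{\pm}\bigr)\,p_n^{|V(H)|}$, where $H$ is the graph on vertex set $\{u_s,v_s,w_t\}$ with the $a$ chosen edges. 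The factors $d_{w_t}^{\pm}$ should themselves be written as sums over edges from $w_t$ into $V_{G_n,K}^-$; equivalently, $r_n^{-1} d_{w_t}^{\pm}$ is essentially $d_{W_{G_n}}(w_t/r_n)$ up to the contribution of edges inside $V_{G_n,K}^+$, which I would need to control (the difference is $\frac{1}{r_n}\sum_{v\in V_{G_n,K}^+}a_{w_t v}$, bounded by $\lceil Kr_n\rceil/r_n \le K+1$, but this needs to enter the normalization carefully — see below).

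Second, the dominant contribution comes from tuples where $H$ is a disjoint union of: (i) a single edge for each of the $a$ edge-factors that does not share a vertex with anything else, and (ii) an isolated vertex $w_t$ for each linear factor — giving $|V(H)| = 2a+b$ (if all distinct), so the term contributes $p_n^{2a+b}$, matched against $r_n^{-b}\cdot\prod_t d_{w_t}^{\pm}$ and the number of such tuples, which is of order $|E(G_{n,K}^+)|^a \cdot |V_{G_n,K}^+|^b \sim r_n^{2a}\cdot(\text{stuff})$. More precisely, after normalization each "clean" term of type (i)–(ii) contributes (for the edge parts) $p_n^2 \sum_{(u,v)\in E(G_{n,K}^+[V_M])} (\cdots)$, which converges to $\frac12\int_{[0,K]^2} W_K(x,y)\,dx\,dy$ by \eqref{eq:W_phi_K} (cut-distance convergence of bounded functions integrated against $f\equiv g\equiv 1$, and more generally against indicator-type test functions for the truncation at $M$), and for the linear parts $\frac{1}{r_n^2}\sum_{w\in V_{n,K,M}^+} d_w^{\pm}$ converges to $\int_0^K d_K(x)\mathbf 1\{d_K(x)\le M\}\,dx$ by \eqref{eq:L1_M} — after absorbing the $O(1/r_n)$-per-vertex discrepancy between $d_w^{\pm}/r_n$ and $d_{W_{G_n}}(w/r_n)$, which vanishes in the limit since the total number of in-$V^+$ edges is $O(r_n^2)$ while... actually this discrepancy term is $\Theta(1)$ per vertex and must be handled: one uses that $d_{W_{G_n}}^{\phi_{n,K}}$ restricted to $[0,K]$ already accounts for it, so I would phrase everything directly in terms of $d_{W_{G_n}}$ and invoke \eqref{eq:L1_M} as stated. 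I would then show all "non-clean" tuples (where $H$ has fewer than $2a+b$ vertices because of coincidences, or has extra edges) contribute $O(1/r_n)$ to the normalized sum, using the maximum-degree bound on $V_{G_n,K}^-$ from Observation~\ref{obs:label} together with \eqref{eq:ETn}, exactly as in the proofs of Lemmas~\ref{lm1}, \ref{lm:independent_moment}, \ref{lm:TZ}.

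The main obstacle, as in the earlier lemmas, is bookkeeping the error terms: showing that tuples with any vertex coincidence, or with the edge-graph $H$ having a connected component of more than two vertices, are asymptotically negligible after the $r_n^{-b}$ normalization. The subtlety specific to this lemma is that the linear factors carry the weights $d_{w_t}^{\pm}$, which can be as large as $Mr_n$, so a coincidence between a $w_t$ and an endpoint of one of the $a$ edges, or between two $w_t$'s, does not automatically lose a full factor of $r_n^{-1}$ — one must check that the gain in the degree-weight is dominated by the loss from reducing $|V(H)|$, again using $\sum_v d_v = 2|E(G_n)| = \Theta(r_n^2)$ and $d_v \le Mr_n$. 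I expect this to require a careful case analysis organized by the number of connected components of $H$ and the number of "merged" linear vertices, but no new ideas beyond the counting techniques already deployed. Finiteness of $\mu_{a,b,K,M}$ follows from the same bounds (the clean terms give a finite limit, the rest vanish), and the representation of the limit as $\frac{1}{2^a}\int\cdots$ (a product of integrals of $W_K$ and of the truncated $d_K$ over the appropriate simplices) drops out of the computation, though for the bare statement only existence and finiteness are needed.
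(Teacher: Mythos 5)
There is a genuine gap, and it is in the central claim of your plan. You propose to argue that only the ``clean'' tuples --- those where the $a$ chosen edges and $b$ chosen linear vertices are pairwise vertex-disjoint, so that $|V(H)|=2a+b$ --- contribute to $\mu_{a,b,K,M}$, and that all other tuples (coincidences, larger connected components) are $o(1)$ ``exactly as in Lemmas~\ref{lm1}, \ref{lm:independent_moment}, \ref{lm:TZ}.'' This is false for $T_{n,K,M}^+$, and the distinction between $T_{n,K,M}^+$ and $T_{n,K,M}^-$ is precisely the point. In Lemma~\ref{lm1} the suppression of non-clean terms comes from Observation~\ref{obs:label}: on $V_{G_n,K}^-$ the maximum degree is $o(r_n)$, so a connected subgraph on $m\ge 3$ vertices has $O(\varepsilon^{m-2} r_n^m)$ copies, a gain of $\varepsilon^{m-2}$. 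On $V_{G_n,K}^+$ the degrees are only bounded by $Mr_n$, which gives no such gain, and the non-clean terms survive in the limit.

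Concretely, take $G_n=K_n$, $p_n=1/n$, $a=2$, $b=0$, $M$ large. Then $T_{n,K,M}^+=T_n=\binom{S_n}{2}$ with $S_n\dto N\sim\dPois(1)$, so $\E[(T_{n,K,M}^+)^2]\to\E\binom{N}{2}^2=\tfrac{7}{4}$. Decomposing the sum $\sum p_n^{|V(H)|}$ by isomorphism type: the clean (two-disjoint-edges) terms contribute $\tfrac14$, the coincident-edge terms contribute $\tfrac12$, and the $K_{1,2}$ (shared-vertex) terms --- which your plan would discard as $O(1/r_n)$ --- contribute $1$, the largest share. Your claimed limit ``$\tfrac{1}{2^a}\int\cdots$, a product of integrals,'' would give $\tfrac14$ here, which is wrong. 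More structurally: if only clean terms survived, the limit of $T_{n,K,M}^+$ would be a Poisson, but Lemma~\ref{lm:t1t2} shows it is a genuine bivariate Poisson chaos (a quadratic, not a linear, functional of the Poisson field).

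The paper's proof therefore does not split into ``clean'' vs.~``negligible'' at all. Instead, display~\eqref{eq:diff_Z_II} keeps \emph{every} isomorphism type $H\in\sG_{a,b}$ with its multiplicity vector $\bm\eta$, rewrites the corresponding contribution exactly as an integral $\int_{[0,K]^{|V(H)|}} t(H,W_{G_n},\bm z)\,\prod_j\zeta_{n,K}(z_j)^{\eta_j}\mathbf 1\{d_{W_{G_n}}(z_j)\le M\}\,\mathrm d z_j$, and then proves convergence of each such integral in Lemma~\ref{lm:WH} by a telescoping cut-norm/$L_1$ argument. This is the step your proposal is missing: for the dense core you must prove convergence of \emph{all} normalized homomorphism-type counts against $W_K$ and $d_K$, not just the product ones. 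Your handling of the $d_w^\pm$-versus-$d_{W_{G_n}}$ discrepancy is also incomplete for the same reason; the paper resolves it cleanly by writing $d_u^\pm/r_n=\zeta_{n,K}(x)+R_n(x)$ with $\zeta_{n,K}(x)=d_{W_{G_n}}(x)-\int_0^K W_{G_n}(x,y)\,\mathrm dy$ (subtracting off the in-$V^+$ degree explicitly) and $\sup_x|R_n(x)|\le p_n$, so that $\zeta_{n,K}$ can be controlled simultaneously through~\eqref{eq:W_phi_K} and~\eqref{eq:L1_M}.
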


\subsubsection{Proof of Lemma \ref{lm:moment_limit}} We begin with the following definition:

\begin{defn}\label{defn:tHWu}
Given a graph $H=(V(H), E(H))$ (with possible isolated vertices), a function $W \in \cW$, and $\bm u \in \R^{|V(H)|}$ define  
$$t(H, W, \bm u)=\prod_{(a, b) \in E(H)} W(u_a, u_b),$$
where $\bm u=(u_1, u_2, \ldots, u_{|V(H)|})$. 
\end{defn}

Now, recall from \eqref{eq:Tn13defn}, $T_{n, K, M}^+:=\sum_{u, v \in V_{n, K, M}^+}a_{uv}(G_n) X_u X_v$. Let $\cN_{n,K}^{a, b}$ be the collection of all $(a+b)$-tuples of the form
$$\bm e=((u_1, v_1),\ldots ,(u_a, v_a), u_1', \ldots, u_b'),$$ where  $u_1, v_1,\ldots ,u_a, v_a, u_1', \ldots, u_b' \in V_{G_n, K}^+$, and $u_i \ne v_i$, for $1 \leq i \leq a$. Define the event, 
$$\chi_{\bm e}=\bm  1\{d_{u_i} \leq M r_n, d_{v_i} \leq Mr_n, d_{u_j'} \leq Mr_n, \text{ for all } 1 \leq i \leq a, 1 \leq j \leq b\},$$ 
where $d_v$ is the degree of the vertex labelled $v \in V(G_n)$. Then, recalling $Y_{n,K,M} = \frac{1}{r_n} \sum_{u \in V_{n, K, M}^+} d_u^{\pm} X_u$,   
\begin{align}\label{eq:diff_Z_I}
\E\left[(T_{n,K,M}^+)^a (Y_{n,K,M})^b \right] &= \sum_{\bm e \in \cN_{n,K}^{a, b}} \E\left[ \prod_{s=1}^a a_{u_s v_s}(G_n) X_{u_s} X_{v_s}  \prod_{s=1}^b \frac{d_{u_s'}^{\pm}}{r_n} X_{u_s'}   \right] \chi_{\bm e}\nonumber \\
&= \sum_{\bm e \in \cN_{n,K}^{a, b}} \frac{\chi_{\bm e}}{r_n^{|V(H)|}} \prod_{s=1}^a a_{u_sv_s}(G_n) \prod_{s=1}^b \frac{d_{u_s'}^{\pm}}{r_n}, 
\end{align} 
where $H$ is the graph formed by the union of the edges $(u_1, v_1), (u_2, v_2), \ldots, (u_a, v_a)$  and the vertices $u_1', u_2', \ldots, u_b'$. 

Observe that 
$$d_{u_s'}^{\pm}= d_{u_s'}-\sum_{j\in V_{G_n, K}^+} a_{u_s'j}(G_n).$$
Since $u_s' \in V_{G_n, K}^+$, there exists $x_s' \in [0, K]$ such that $u_s'=\ceil{x_s' r_n}$. This implies by \eqref{eq:WGn} and \eqref{eq:normalized_degree}, 
\begin{align}\label{eq:diff_degree}
\frac{1}{r_n}d_{u_s'}^{\pm}&= d_{W_{G_n}}(x_s')-\int_0^K W_{G_n}(x_s', y)\mathrm dy + R_n(x_s')=\zeta_{n, K}(x_s')  + R_n(x_s'),  
\end{align}
where $\zeta_{n, K}(x):=(d_{W_{G_n}}(x)-\int_0^K W_{G_n}(x, y) \mathrm dy)$ and 
\begin{align}\label{eq:integral_xs}
R_n(x_s') := - \int_{K}^{\frac{\ceil{Kr_n}}{r_n}} W_{G_n}(x_s',y) \mathrm dy.
\end{align} 
Note that $\sup_{x}|R_n(x)| \leq p_n .$ Similarly, let  $x_s, y_s \in [0, K]$ be such that $u_s=\ceil{x_s r_n}$ and $v_s=\ceil{y_s r_n}$. Then (recall \eqref{eq:WGn})
\begin{align}\label{eq:adj_Gn}
 \prod_{s=1}^a a_{u_sv_s}(G_n) = \prod_{s=1}^a a_{\ceil{x_s r_n}\ceil{y_s r_n}}(G_n)=\prod_{s=1}^a W_{G_n}(x_s, y_s).
\end{align}

Now, observe that the union of the edges $(u_1, v_1),\ldots, (u_a, v_a)$ forms a graph $H_1=(V(H_1), E(H_1))$, where $V(H_1)=\{u_1, \ldots, u_a, v_1, \ldots, v_a \}$ and $E(H_1)=\{(u_1, v_1),\ldots, (u_a, v_a)\}$. Let $H=(V(H), E(H))$ be the graph obtained by the union of $H_1$ and the set of vertices $\{u_1', \ldots, u_b'\}$, that is, $V(H)=V(H_1) \cup \{u_1', \ldots, u_b'\}$ and $E(H)=E(H_1)$. Note that $H$ has at most $a$ edges and at most $b$ isolated vertices. Let $\{w_1, w_2, \ldots, w_{|V(H)|}\}$ be any labeling of the vertices in $V(H)$, and   $\eta_{j} \in [0, b]:=\{0, 1, 2,  \ldots, b\}$, for $1 \leq j \leq |V(H)|$, be the number of times the vertex $w_j$ appears in the multi-set $\{u_1', u_2', \ldots, u_b'\}$. Finally, let $z_j$ be such that $w_j=\ceil{z_j r_n}$. Then using  \eqref{eq:diff_degree} and \eqref{eq:adj_Gn}, for every graph $H$ with at most $a$ edges and at most $b$ isolated vertices and every vector $\bm \eta=(\eta_1, \eta_2, \ldots, \eta_{|V(H)|})$, there is a non-negative constant $c(H, \bm \eta)$, such that the sum in \eqref{eq:diff_Z_I} can be rewritten as  
\begin{align}\label{eq:diff_Z_II}
&\E\left[(T_{n,K,M}^+)^a (Y_{n,K,M})^b \right] \nonumber \\
&= \sum_{\substack{H \in \sG_{a, b} \\ \bm \eta \in [0, b]^{|V(H)|}}} c(H, \bm \eta)  \int_{\sB_{K, n}} t(H, W_{G_n}, \bm z) \prod_{j=1}^{|V(H)|} (\zeta_{n, K}(z_j) + R_n(z_j))^{\eta_j}\bm  1\{d_{W_{G_n}}(z_j) \leq M\} \mathrm d z_j,
\end{align} 
where 
\begin{itemize}

\item[--] $\sB_{K, n}:=[0,\frac{1}{r_n} \ceil{K r_n}]^{|V(H)|}$,  

\item[--] $\bm z=(z_1, z_2, \ldots, z_{|V(H)|})$, 
 
\item[--] $\zeta_{n, K}(\cdot)$ is as in \eqref{eq:diff_degree}, $R_n(\cdot)$ as in \eqref{eq:integral_xs}, and  $t(H, W_{G_n}, \bm z)$ as in Definition \ref{defn:tHWu},

\item[--] $\sG_{a, b}$ is the collection of all graphs with at most $a$ edges and at most $b$ isolated vertices and $[0, b]^{|V(H)|}:=\{0, 1, 2, \ldots, b\}^{|V(H)|}$.
\end{itemize}

Note, since the sum in \eqref{eq:diff_Z_II} is over  a finite set (not depending on $n$) and  each term in the integrand is bounded (by a function of $H$, $K$, and $M$), the integral over $\sB_{K, n}$ can be replaced by the integral over $\sB_K=[0, K]^{|V(H)|}$, as $n \rightarrow \infty$. Moreover, for every $1 \leq j \leq |V(H)|$, expanding the term $(\zeta_{n, K}(z_j) + R_n(z_j))^{\eta_j}$ in \eqref{eq:diff_Z_II} by the binomial theorem, and using the fact $\sup_{x}|R_n(x)| \leq p_n =o(1)$, the proof of Lemma \ref{lm:moment_limit} follows from Lemma \ref{lm:WH} below. 

\begin{lem} Under the assumptions of Lemma \ref{lm:moment_limit}, given a finite simple graph $H=(V(H), E(H))$ (with possible isolated vertices), and the non-negative integers $s_1, s_2, \ldots, s_{|V(H)|}$, 
\begin{align*}
\lim_{n \rightarrow \infty}\int_{[0,K]^{|V(H)|}} & t(H, W_{G_n}, \bm u) \prod_{a=1}^{|V(H)|}\zeta_{n, K}(u_a)^{s_a}\bm  1\{d_{W_{G_n}}(u_a) \leq M\} \mathrm d u_a \\
& = \int_{[0,K]^{|V(H)|}}  t(H, W_K, \bm u) \prod_{a=1}^{|V(H)|}\zeta_{K}(u_a)^{s_a}\bm  1\{d_K(u_a) \leq M\} \mathrm d u_a,
\end{align*}
where $\zeta_{n, K}(x):=(d_{W_{G_n}}(x)-\int_0^K W_{G_n}(x, y) \mathrm dy)$ and $\zeta_{K}(x):=(d_K(x)-\int_0^K W_K(x, y) \mathrm dy)$. 
\label{lm:WH}
\end{lem}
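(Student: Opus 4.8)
The plan is to treat the integrand as a product of two kinds of factors — the ``edge factors'' $W_{G_n}(u_a,u_b)$ coming from $t(H,W_{G_n},\bm u)$, and the ``vertex factors'' $\zeta_{n,K}(u_a)^{s_a}\mathbf 1\{d_{W_{G_n}}(u_a)\le M\}$ — and to pass to the limit in the two families separately, exploiting the two hypotheses \eqref{eq:W_phi_K} and \eqref{eq:L1_M}. Since \eqref{eq:W_phi_K} only controls $W_{G_n}$ up to a measure-preserving bijection $\phi_{n,K}$, the very first step is to perform the change of variables $u_a \mapsto \phi_{n,K}(u_a)$ in each of the $|V(H)|$ coordinates; because each $\phi_{n,K}$ is a measure-preserving bijection of $[0,K]$, the integral is unchanged, and the integrand becomes $t(H,W_{G_n}^{\phi_{n,K}},\bm u)\prod_a \zeta_{n,K}^{\phi_{n,K}}(u_a)^{s_a}\mathbf 1\{d_{W_{G_n}}^{\phi_{n,K}}(u_a)\le M\}$, where $\zeta_{n,K}^{\phi_{n,K}}(x)=d_{W_{G_n}}^{\phi_{n,K}}(x)-\int_0^K W_{G_n}^{\phi_{n,K}}(x,y)\,\mathrm dy$. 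So without loss of generality we may assume $\phi_{n,K}=\mathrm{id}$ and replace $W_{G_n}$ by $W_{G_n}^{\phi_{n,K}}$ throughout; I will keep the same notation.

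Next I would handle the edge factors using the standard counting-lemma philosophy for the cut-norm: the map $W\mapsto \int_{[0,K]^{|V(H)|}} t(H,W,\bm u)\,g(\bm u)\,\mathrm d\bm u$ is Lipschitz in $\|\cdot\|_{\square([0,K]^2)}$ for each fixed $H$ and each fixed bounded ``test function'' $g$, with a constant depending only on $|E(H)|$, $K$, and $\|g\|_\infty$. Concretely, replacing the edge factors $W_{G_n}(u_a,u_b)$ by $W_K(u_a,u_b)$ one edge at a time, bounding the telescoping difference, and at each step absorbing all the coordinates not incident to the current edge (together with all the vertex factors and the remaining edge factors, all of which are uniformly bounded) into functions of the two active coordinates, produces a bound of the form $|E(H)|\,C(K,M,s_\bullet)\,\|W_{G_n}-W_K\|_{\square([0,K]^2)}\to 0$. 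This reduces the problem to showing
\[
\lim_{n\to\infty}\int_{[0,K]^{|V(H)|}} t(H,W_K,\bm u)\prod_{a=1}^{|V(H)|}\zeta_{n,K}(u_a)^{s_a}\mathbf 1\{d_{W_{G_n}}(u_a)\le M\}\,\mathrm d u_a
= \int_{[0,K]^{|V(H)|}} t(H,W_K,\bm u)\prod_{a=1}^{|V(H)|}\zeta_{K}(u_a)^{s_a}\mathbf 1\{d_K(u_a)\le M\}\,\mathrm d u_a.
\]
Here the $W_K$-edge factor is now fixed and bounded; since the remaining integrand is a product over the $|V(H)|$ coordinates of terms depending on a single coordinate, it suffices to show that, in $L_1([0,K])$,
\[
\zeta_{n,K}(u)^{s}\,\mathbf 1\{d_{W_{G_n}}(u)\le M\}\ \longrightarrow\ \zeta_K(u)^{s}\,\mathbf 1\{d_K(u)\le M\}
\]
for each fixed $s\ge 0$, and then use that an $|V(H)|$-fold product of coordinatewise $L_1$-convergent, uniformly bounded families converges in $L_1$ of the product (again a telescoping argument, one coordinate at a time).

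The remaining one-dimensional claim is where the hypothesis \eqref{eq:L1_M} enters, and I expect it to be the main technical obstacle — not because it is deep, but because the truncation $\mathbf 1\{\cdot\le M\}$ and the two-piece structure $\zeta = d_W - \int W$ interact awkwardly. The idea: write $d_{W_{G_n}}(u)\mathbf 1\{d_{W_{G_n}}(u)\le M\}\to d_K(u)\mathbf 1\{d_K(u)\le M\}$ in $L_1([0,K])$, which is exactly \eqref{eq:L1_M}; note $\int_0^K W_{G_n}(u,y)\,\mathrm dy\to \int_0^K W_K(u,y)\,\mathrm dy$ in $L_1([0,K])$ (indeed uniformly, as testing against $g(y)=\mathrm{sgn}$ of the difference shows this is dominated by $\|W_{G_n}-W_K\|_{\square([0,K]^2)}$, or at worst by $K$ times it); observe that on the set $\{d_{W_{G_n}}(u)\le M\}$ we have $0\le \int_0^K W_{G_n}(u,y)\,\mathrm dy\le \min(M,K)$ so the products $\zeta_{n,K}(u)^s\mathbf 1\{\cdot\le M\}$ are uniformly bounded; then combine. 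The one genuinely delicate point is that $\mathbf 1\{d_{W_{G_n}}(u)\le M\}$ need not converge pointwise on the level set $\{d_K(u)=M\}$, so I would either invoke that $\{d_K = M\}$ can be taken to have measure zero for a.e. choice of the truncation level $M$ (consistent with the ``along diverging sequences of $M$'' remark), or argue directly: since $d_{W_{G_n}}\mathbf 1\{d_{W_{G_n}}\le M\}\to d_K\mathbf 1\{d_K\le M\}$ in $L_1$, passing to a subsequence gives a.e. convergence, which forces $\mathbf 1\{d_{W_{G_n}}\le M\}\to \mathbf 1\{d_K\le M\}$ a.e. off $\{d_K=M\}$; then the bounded convergence theorem (using the uniform bound above and the $L_1$-convergence of $\int W_{G_n}(u,\cdot)$, after possibly passing to a further subsequence for a.e.\ convergence of that term too) yields the claimed $L_1$-limit along the subsequence, and since the limit is the same for every subsequence the full sequence converges. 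Expanding $(\zeta_{n,K})^s$ by the binomial theorem reduces $s>1$ to products of the two already-controlled pieces, and this completes the proof.
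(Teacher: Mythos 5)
Your plan follows the paper's proof in all essentials: change variables by $\phi_{n,K}$ first, telescope the edge factors $W(u_a,u_b)$ one at a time against the cut-norm, expand $\zeta_{n,K}^{s_a}$ by the binomial theorem into $d_{W_{G_n}}$-parts and $\int_0^K W_{G_n}$-parts, and control the former by \eqref{eq:L1_M} and the latter by the cut-norm. The paper does the binomial expansion first (reducing to \eqref{eq:integral_I}) and then runs three telescoping passes \eqref{eq:W}, \eqref{eq:K}, \eqref{eq:WK_II}; you telescope the edges first, reduce to a one-coordinate $L_1$ statement, and expand last. That reordering is cosmetic.

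There is, however, a genuine flaw in the way you propose to handle the truncation indicator. You claim that $L_1$-convergence of $d_{W_{G_n}}\mathbf 1\{d_{W_{G_n}}\le M\}$ to $d_K\mathbf 1\{d_K\le M\}$ forces, along a subsequence, $\mathbf 1\{d_{W_{G_n}}\le M\}\to\mathbf 1\{d_K\le M\}$ a.e.\ off $\{d_K=M\}$. This implication is false: at a point $u$ with $d_K(u)>M$ while $d_{W_{G_n}}(u)\to 0$, both truncated degrees tend to $0$ (so there is no contradiction with \eqref{eq:L1_M}), yet $\mathbf 1\{d_{W_{G_n}}(u)\le M\}=1$ eventually while $\mathbf 1\{d_K(u)\le M\}=0$, so the indicators never agree. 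The reason this does not sink the lemma for $s\ge 1$ is not convergence of the indicators but the bound $0\le\zeta_{n,K}(u)\le d_{W_{G_n}}(u)$, which makes $\zeta_{n,K}(u)^s\mathbf 1\{d_{W_{G_n}}(u)\le M\}\le d_{W_{G_n}}(u)^s\to 0=\zeta_K(u)^s\mathbf 1\{d_K(u)\le M\}$ on that bad set; equivalently, the paper's device of working with $d_{W_{G_n}}^{\phi_{n,K}}(u\mid M)^{\kappa}:=\bigl(d_{W_{G_n}}^{\phi_{n,K}}(u)\mathbf 1\{d_{W_{G_n}}^{\phi_{n,K}}(u)\le M\}\bigr)^{\kappa}$ bakes the truncation into the power so that the $L_1$ hypothesis \eqref{eq:L1_M}, together with Observation \ref{obs:degree_M} and $M\in\cD$, applies directly with no separate claim about indicator convergence. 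You should replace the ``indicators converge a.e.\ along a subsequence'' step with this domination argument. (For the residual $s_a=0$, isolated-vertex case the indicator genuinely need not converge — but those terms do not arise in the application of Lemma \ref{lm:WH} inside \eqref{eq:diff_Z_II}, where isolated vertices of $H$ always carry $\eta_j\ge 1$, and non-isolated vertices carry an edge factor $W_{G_n}(z_j,\cdot)\to 0$ on the bad set.)
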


\begin{proof} Define $d_{W_{G_n}, K}(x):=\int_0^K W_{G_n}(x, y) \mathrm dy$ and $d_{W, K}(x):=\int_0^K W_K(x, y) \mathrm dy$. Expanding $\zeta_{n, K}(u_a)^{s_a}=(d_{W_{G_n}}(u_a)-d_{W_{G_n}, K}(u_a))^{s_a}$ by the binomial theorem, for every $1 \leq a \leq |V(H)|$, we see it suffices to show that (recall $\sB_K:=[0, K]^{|V(H)|}$), 
\begin{align}\label{eq:integral_I}
\lim_{n \rightarrow \infty}& \int_{\sB_K} t(H, W_{G_n}, \bm u) \prod_{a=1}^{|V(H)|}d_{W_{G_n}}(u_a)^{\kappa_a} d_{W_{G_n}, K}(u_a)^{\lambda_a}  \bm  1\{d_{W_{G_n}}(u_a) \leq M\} \mathrm d u_a \nonumber \\
& = \int_{\sB_K} t(H, W_K, \bm u) \prod_{a=1}^{|V(H)|}d_K(u_a)^{\kappa_a} d_{W, K}(u_a)^{\lambda_a}  \bm  1\{d_K(u_a) \leq M\} \mathrm d u_a,
\end{align}
for non-negative integers $\kappa_1, \kappa_2, \ldots, \kappa_{|V(H)|}$ and $\lambda_1, \lambda_2, \ldots, \lambda_{|V(H)|}$.

To begin with, define $d_{W_{G_n}, K}^{\phi_{n, K}}(x):=\int_0^K W_{G_n}(\phi_{n, K}(x), y) \mathrm dy=\int_0^K W_{G_n}(\phi_{n, K}(x), \phi_{n, K}(z)) \mathrm dz$, where the last equality follows by the change of variable $y=\phi_{n, K}(z)$. This implies, 
\begin{align}\label{eq:integral_permutation}
& \int_{\sB_K} t(H, W_{G_n}, \bm u) \prod_{a=1}^{|V(H)|}d_{W_{G_n}}(u_a)^{\kappa_a} d_{W_{G_n}, K}(u_a)^{\lambda_a}  \bm  1\{d_{W_{G_n}}(u_a) \leq M\} \mathrm d u_a \nonumber \\ 
& = \int_{\sB_K}  t(H, W_{G_n}^{\phi_{n, K}}, \bm z) \prod_{a=1}^{|V(H)|}d_{W_{G_n}}^{\phi_{n, K}}(z_a)^{\kappa_a} d_{W_{G_n}, K}^{\phi_{n, K}}(z_a)^{\lambda_a}  \bm  1\{d_{W_{G_n}}^{\phi_{n, K}}(z_a) \leq M\} \mathrm d z_a, 
\end{align}
by changes of variables $u_a=\phi_{n, K}(z_a)$, for $1 \leq a \leq |V(H)|$, where $\bm u=(u_1, u_2, \ldots, u_{|V(H)|})$ and  $\bm z=(z_1, z_2, \ldots, z_{|V(H)|})$. Therefore, by \eqref{eq:integral_I}, it suffices to show that 
\begin{align}\label{eq:integral_II}
\lim_{n \rightarrow \infty}&  \int_{\sB_K}  t(H, W_{G_n}^{\phi_{n, K}}, \bm z) \prod_{a=1}^{|V(H)|}d_{W_{G_n}}^{\phi_{n, K}}(z_a)^{\kappa_a} d_{W_{G_n}, K}^{\phi_{n, K}}(z_a)^{\lambda_a}  \bm  1\{d_{W_{G_n}}^{\phi_{n, K}}(z_a) \leq M\} \mathrm d z_a \nonumber \\ 
& = \int_{\sB_K} t(H, W_K, \bm u) \prod_{a=1}^{|V(H)|}d_K(u_a)^{\kappa_a} d_{W, K}(u_a)^{\lambda_a}  \bm  1\{d_K(u_a) \leq M\} \mathrm d u_a. 
\end{align}

Now, denote $d_{W_{G_n}}^{\phi_{n, K}}(u_a|M):= d_{W_{G_n}}^{\phi_{n, K}}(u_a) \bm  1\{d_{W_{G_n}}^{\phi_{n, K}}(u_a) \leq M\}$. Then by a telescoping argument similar to the proof of \cite[Theorem 3.7(a)]{graph_limits_I}, it follows that 
\begin{align}\label{eq:W}
& \left|\int_{\sB_K} \Big( t(H, W_{G_n}^{\phi_{n, K}}, \bm u) -  t(H, W_K, \bm u) \Big) \prod_{a=1}^{|V(H)|} d_{W_{G_n}, K}^{\phi_{n, K}}(u_a)^{\lambda_a}  d_{W_{G_n}}^{\phi_{n, K}}(u_a|M)^{\kappa_a} \mathrm d u_a \right| \nonumber \\
& \lesssim_{M,H,K} ||W_{G_n}^{\phi_{n, K}}-W_K||_{\square([0, K]^2)}. 
\end{align} 
Moreover, recalling $d_{W_{G_n}, K}^{\phi_{n, K}}(x):=\int_0^K W_{G_n}(\phi_{n, K}(x), y) \mathrm dy=\int_0^K W_{G_n}(\phi_{n, K}(x), \phi_{n, K}(z)) \mathrm dz$, and from the definition of the cut-distance, 
$$\sup_{f:[0,K]\rightarrow [-1,1]}\left|\int_0^K (d_{W_{G_n}, K}^{\phi_{n, K}}(u)-d_{W, K}(u))f(u)\mathrm du \right| \leq ||W_{G_n}^{\phi_{n, K}}-W_K|||_{\square([0, K]^2)}.$$
Then, for any integer $a \geq 1$ and all $f:[0,K]\rightarrow [-1,1]$, 
\begin{align}
& \left|\int_0^K (d_{W_{G_n}, K}^{\phi_{n, K}}(u)^a-d_{W, K}(u)^a)f(u)\mathrm du \right| \nonumber \\ 
& \hspace{0.5in}  \leq \left|\int_0^K (d_{W_{G_n}, K}^{\phi_{n, K}}(u)-d_{W, K}(u)) d_{W_{G_n}, K}^{\phi_{n, K}}(u)^{a-1} f(u)\mathrm du \right| \nonumber \\
& \hspace{0.95in} + \left|\int_0^K (d_{W_{G_n}, K}^{\phi_{n, K}}(u)^{a-1}-d_{W, K}(u)^{a-1}) d_{W, K}(u) f(u)\mathrm du \right| \nonumber \\ 
& \hspace{0.5in} \lesssim_{M, H, K} ||W_{G_n}^{\phi_{n, K}}-W_K|||_{\square([0, K]^2)}.
\end{align}
where the last step follows by repeating the telescoping argument $a-1$ times. Now, repeating this telescoping argument again gives, 
\begin{align}\label{eq:K}
&\left|\int_{\sB_K} t(H, W_K, \bm u) \left(\prod_{a=1}^{|V(H)|} d_{W_{G_n}, K}^{\phi_{n, K}}(u_a)^{\lambda_a} -  \prod_{a=1}^{|V(H)|} d_{W, K}(u_a)^{\lambda_a} \right) \prod_{a=1}^{|V(H)|}d_{W_{G_n}}^{\phi_{n, K}}(u_a|M)^{\kappa_a}   \mathrm d u_a \right| \nonumber \\
& \lesssim_{M,H,K} ||W_{G_n}^{\phi_{n, K}}-W_K||_{\square([0, K]^2)}.
\end{align} 
Hence, combining \eqref{eq:W} and \eqref{eq:K}, and the triangle inequality gives, 
\begin{align}\label{eq:WK_I}
& \left|\int_{\sB_K} \left( t(H, W_{G_n}^{\phi_{n, K}}, \bm u)  \prod_{a=1}^{|V(H)|} d_{W_{G_n}, K}^{\phi_{n, K}}(u_a)^{\lambda_a} - t(H, W_K, \bm u)  \prod_{a=1}^{|V(H)|} d_{W, K}(u_a)^{\lambda_a} \right) \prod_{a=1}^{|V(H)|}d_{W_{G_n}}^{\phi_{n, K}}(u_a|M)^{\kappa_a}   \mathrm d u_a \right|  \nonumber \\
& \lesssim_{M,H,K} ||W_{G_n}^{\phi_{n, K}}-W_K||_{\square([0, K]^2)}. 
\end{align} 
Note that the RHS above goes to zero as $n \rightarrow \infty$, by \eqref{eq:W_phi_K}. 

Next, define  $d_K(u_a|M):= d_K(u_a) \bm  1\{d_K(u_a) \leq M\}$, and note that 
\begin{align}\label{eq:WK_II}
& \left|\int_{\sB_K}  t(H, W_K, \bm u)  \prod_{a=1}^{|V(H)|} d_{W, K}(u_a)^{\lambda_a} \left(\prod_{a=1}^{|V(H)|}d_{W_{G_n}}^{\phi_{n, K}}(u_a|M)^{\kappa_a}  - \prod_{a=1}^{|V(H)|} d_K(u_a|M)^{\kappa_a} \right) \prod_{a=1}^{|V(H)|} \mathrm d u_a \right|  \nonumber \\ 
& \lesssim_{M, H, K} \int_{\sB_K}   \left| \prod_{a=1}^{|V(H)|}d_{W_{G_n}}^{\phi_{n, K}}(u_a|M)^{\kappa_a}  - \prod_{a=1}^{|V(H)|}d_K(u_a|M)^{\kappa_a}  \right| \prod_{a=1}^{|V(H)|} \mathrm d u_a \nonumber \\ 
& \lesssim_{M, H, K}   \int_0^K \left| d_{W_{G_n}}^{\phi_{n, K}}(u|M)  - d_K(u|M) \right|  \mathrm d u, 
\end{align} 
where the last step follows by a telescoping argument (similar to Observation \ref{obs:degree_M} below). Note that RHS above goes to zero as $n \rightarrow \infty$, by \eqref{eq:L1_M}.  

Therefore, taking limit as $n \rightarrow \infty$, and combining \eqref{eq:WK_I} and \eqref{eq:WK_II}, and the triangle inequality, gives \eqref{eq:integral_II}, as required. \end{proof}

The next observation shows that a sufficient condition for \eqref{eq:L1_M} to hold infinitely often is the $L_1$ convergence of the function $d_{W_{G_n}}^{\phi_{n, K}}$ to $d_K$.  To this end, we need a definition. 

\begin{defn}\label{defn:M_points}
Let $\cD$ denote the set of all positive reals $M$ such that for all positive integers $K$, $\P(d_K(U_K)=M)=0$, where $U_K\sim \mathrm{Unif}[0,K]$. 
\end{defn}

Note that the complement of $\cD$ in $(0,\infty)$ is countable, and so given any $M_0>0$ we can choose $M>M_0$ with $M\in \cD$. 

\begin{obs}\label{obs:degree_M} Suppose 
\begin{align}\label{eq:degree_L1}
\lim_{n \rightarrow \infty} \int_0^K \left|d_{W_{G_n}}^{\phi_{n, K}}(x) - d_K(x)\right| \mathrm dx = 0.
\end{align}
Then any integer $a \geq 1$, 
\begin{align}\label{eq:deg_M}
\lim_{n \rightarrow 0}\int_0^K \left| d_{W_{G_n}}^{\phi_{n, K}}(u)^a \bm 1\{d_{W_{G_n}}^{\phi_{n, K}}(u) \leq M\} - d_K(u)^a \bm 1\{d_K(u) \leq M\} \right| \mathrm d u=0,
\end{align}
whenever $M \in \cD$. 
\end{obs}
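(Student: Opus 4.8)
The plan is to deduce the statement from an elementary property of the truncated power map $g_M(t) := t^a\,\mathbf 1\{t \le M\}$, $t \ge 0$: it is bounded by $M^a$ everywhere, identically zero on $(M,\infty)$, and Lipschitz with constant $aM^{a-1}$ on $[0,M]$ (by the mean value theorem, using that $a \ge 1$ is an integer). The only place where $g_M$ misbehaves is the jump at $t = M$, and the role of the hypothesis $M \in \cD$ is precisely to guarantee that the level set $\{u \in [0,K] : d_K(u) = M\}$ has Lebesgue measure zero, since $\P(d_K(U_K) = M) = 0$ for $U_K \sim \mathrm{Unif}[0,K]$. Throughout, write $\eps_n := ||d_{W_{G_n}}^{\phi_{n, K}} - d_K||_{L_1([0, K])}$, so that $\eps_n \to 0$ by \eqref{eq:degree_L1}, and note that the integrand in \eqref{eq:deg_M} equals $|g_M(d_{W_{G_n}}^{\phi_{n, K}}(u)) - g_M(d_K(u))|$.

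First I would fix $\eps > 0$ and split $[0,K] = A_\eps \cup B_\eps$ (a disjoint decomposition), where $B_\eps := \{u \in [0,K] : |d_K(u) - M| \le \eps\}$ and $A_\eps := [0,K] \setminus B_\eps$. On $B_\eps$ I use only the crude bound $|g_M(s) - g_M(t)| \le M^a$, so that the contribution of $B_\eps$ to the integral in \eqref{eq:deg_M} is at most $M^a\,|B_\eps|$, a bound independent of $n$. Since the sets $B_\eps$ decrease, as $\eps \downarrow 0$, to $\{u \in [0,K] : d_K(u) = M\}$, which has measure $K\,\P(d_K(U_K) = M) = 0$ because $M \in \cD$, continuity of Lebesgue measure from above (and $|B_1| \le K < \infty$) gives $|B_\eps| \to 0$ as $\eps \downarrow 0$.

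Next I would estimate the integral over $A_\eps$, splitting it according to whether $|d_{W_{G_n}}^{\phi_{n, K}}(u) - d_K(u)| < \eps$ or $\ge \eps$. On the first piece, membership in $A_\eps$ together with $\eps$-closeness forces $d_{W_{G_n}}^{\phi_{n, K}}(u)$ and $d_K(u)$ onto the same strict side of $M$: if $d_K(u) > M + \eps$ then $d_{W_{G_n}}^{\phi_{n, K}}(u) > M$ as well, so $g_M$ vanishes at both points; if $d_K(u) < M - \eps$ then $d_{W_{G_n}}^{\phi_{n, K}}(u) < M$ as well, so both lie in $[0,M]$ and the Lipschitz bound yields $|g_M(d_{W_{G_n}}^{\phi_{n, K}}(u)) - g_M(d_K(u))| \le aM^{a-1}|d_{W_{G_n}}^{\phi_{n, K}}(u) - d_K(u)|$; integrating, this piece contributes at most $aM^{a-1}\eps_n$. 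On the second piece, Markov's inequality bounds its Lebesgue measure by $\eps^{-1}\eps_n$, and $|g_M| \le M^a$ there, so it contributes at most $M^a\eps^{-1}\eps_n$. Altogether, for each fixed $\eps > 0$,
\[
\int_0^K \left| g_M(d_{W_{G_n}}^{\phi_{n, K}}(u)) - g_M(d_K(u)) \right| \mathrm d u \;\le\; M^a\,|B_\eps| \;+\; \left(aM^{a-1} + M^a\eps^{-1}\right)\eps_n,
\]
whence $\limsup_{n\to\infty}$ of the left-hand side is at most $M^a\,|B_\eps|$; letting $\eps \downarrow 0$ gives \eqref{eq:deg_M}.

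The only genuine obstacle is the discontinuity of $g_M$ at $M$, which is exactly why the conclusion must be restricted to $M \in \cD$; once the level set $\{d_K = M\}$ is absorbed into the shrinking set $B_\eps$, everything else is a routine $L_1$/Markov estimate driven by \eqref{eq:degree_L1}. (Incidentally, \eqref{eq:degree_L1} also forces $d_K \in L_1([0,K])$, since each $d_{W_{G_n}}^{\phi_{n, K}}$ lies in $L_1([0,K])$; this is not needed for the argument but is consistent with the hypotheses of Lemma \ref{lm:moment_limit}.)
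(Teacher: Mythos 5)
Your proof is correct, and it takes a genuinely different route from the paper's. The paper first handles $a=1$ via a probabilistic detour: it interprets \eqref{eq:degree_L1} as $L_1$-convergence of $d_{W_{G_n}}^{\phi_{n,K}}(U)$ to $d_K(U)$ for $U \sim \mathrm{Unif}[0,K]$, passes to an almost-surely convergent subsequence, uses $M\in\cD$ to push a.s.\ convergence through the indicator jump (since the level set is null), and recovers $L_1$-convergence by dominated convergence with the bound $M$; the case $a>1$ is then reduced to $a=1$ by a telescoping identity $x^a-y^a = \sum x^{a-1-j}y^j(x-y)$ on the truncated functions. Your argument is a direct, deterministic $\varepsilon$-estimate that handles every integer $a\ge 1$ in one pass: you isolate the bad set $B_\varepsilon$ near the jump level $M$ (whose measure vanishes because $M\in\cD$), use the Lipschitz bound $aM^{a-1}$ on the smooth part $[0,M]$ and the trivial bound $M^a$ elsewhere, and control the remaining piece with Markov's inequality. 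The two proofs use the hypothesis $M\in\cD$ at exactly the same point — to kill the level set $\{d_K=M\}$ — but yours replaces the subsequence-plus-DCT machinery with a single explicit bound, which is arguably more self-contained and avoids the separate telescoping step; the paper's version, on the other hand, cleanly exposes the conceptual reason (a.e.\ continuity of the truncation map) and reuses the telescoping trick it needs elsewhere (e.g.\ in Lemma~\ref{lm:WH}). Both are complete and correct.
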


\begin{proof} To begin with suppose $a=1$. The assumption \eqref{eq:degree_L1} implies,  $d_{W_{G_n}}^{\phi_{n, K}}(U) \stackrel{L_1} \rightarrow d_K(U)$, where $U \sim \mathrm{Unif}[0, K]$. Note that $\E(d_K(U)) < \infty$.\footnote{To observe this, note that,  for all $K\geq 1$, $\int_0^Kd_K(x) \mathrm dx=\limsup_{n\rightarrow\infty}\int_0^K d_{W_{G_n}}^{\phi_{n, K}}(x) \mathrm dx\lesssim \limsup_{n\rightarrow\infty}\frac{|E(G_n)|}{r_n^2}$, 
which is bounded by \eqref{eq:ETn}.} Then for every sequence there is a further subsequence $\{n_s\}_{s \geq 1}$ along which $$d_{W_{G_{n_s}}}^{\phi_{n_s}, K}(U) \stackrel{a.s.} \rightarrow d_K(U).$$ Hence, along this subsequence, $d_{W_{G_{n_s}}}^{\phi_{n_s}, K}(U) \bm 1\{d_{W_{G_{n_s}}}^{\phi_{n_s}, K}(U) \leq M \}\stackrel{a.s.} \rightarrow d_K(U) \bm 1\{d_K(U) \leq M\}$, whenever $\P(d_K(U)=M)=0$, that is, $M \in \cD$. Then by the dominated convergence theorem, $$d_{W_{G_{n_s}}}^{\phi_{n_s}, K}(U) \bm 1\{d_{W_{G_{n_s}}}^{\phi_{n_s}, K}(U) \leq M\}\stackrel{L_1} \rightarrow d_K(U) \bm 1\{d_K(U) \leq M\},$$ 
proving \eqref{eq:deg_M} for $a=1$. 

For $a > 1$, a telescoping argument gives, 
\begin{align*}
& \int_0^K \left| d_{W_{G_n}}^{\phi_{n, K}}(u)^a \bm 1\{d_{W_{G_n}}^{\phi_{n, K}}(u) \leq M\} - d_K(u)^a \bm 1\{d_K(u) \leq M\} \right| \mathrm d u \\
& \leq \int_0^K d_{W_{G_n}}^{\phi_{n, K}}(u)^{a-1}  \bm 1\{d_{W_{G_n}}^{\phi_{n, K}}(u)  \leq M\}  \left| d_{W_{G_n}}^{\phi_{n, K}}(u)  \bm 1\{d_{W_{G_n}}^{\phi_{n, K}}(u) \leq M\} - d_K(u) \bm 1\{d_K(u) \leq M\} \right| \mathrm d u_a \\ 
& \hspace{0.4in} + \int_0^K \left| d_{W_{G_n}}^{\phi_{n, K}}(u)^{a-1} d_K(u)  \bm 1\{d_{W_{G_n}}^{\phi_{n, K}}(u), d_K(u) \leq M\} - d_K(u)^a \bm 1\{d_K(u) \leq M\} \right| \mathrm d u \\ 
& \lesssim_{M, a} \int_0^K \left| d_{W_{G_n}}^{\phi_{n, K}}(u) \bm 1\{d_{W_{G_n}}^{\phi_{n, K}}(u) \leq M\} - d_K(u) \bm 1\{d_K(u) \leq M\} \right| \mathrm d u \rightarrow 0,
\end{align*}
where the second inequality follows by repeating the telescoping argument from the previous  step $a-1$ times, and the last step uses  \eqref{eq:deg_M} for $a=1$. 
\end{proof}

\subsubsection{Existence of Limit of $(T_{n,K,M}^+, Y_{n,K,M})$} The existence of the limiting distribution of $(T_{n,K,M}^+,$ $Y_{n,K,M})$ follows from the above lemma and  the Stieltjes moment condition.

\begin{lem}\label{lm:limit_existence} Suppose the assumptions of Lemma \ref{lm:moment_limit} hold. Then there exists random variables $(T_{K, M}^{+}, Y_{K, M})$ such that, as $n \rightarrow \infty$, 
$$(T_{n,K,M}^+, Y_{n,K,M}) \rightarrow (T_{K, M}^{+}, Y_{K, M}),$$ 
in distribution and in all (mixed) moments. 
\end{lem}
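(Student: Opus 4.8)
The plan is to derive the lemma from the convergence of all mixed moments already supplied by Lemma~\ref{lm:moment_limit}, together with a uniform sub‑exponential bound that makes the limiting moment sequence determine a unique law. First I would record the crude pointwise bounds coming straight from the definitions. Put $S_n:=\sum_{u=1}^{\lceil Kr_n\rceil} X_u\sim\dBin(\lceil Kr_n\rceil,p_n)$. Since $T_{n,K,M}^+$ is a partial sum of the products $X_uX_v$ over pairs of the first $\lceil Kr_n\rceil$ vertices, $T_{n,K,M}^+\le\binom{S_n}{2}$; and since $d_u^\pm\le d_u\le Mr_n$ on $V_{n,K,M}^+$, also $Y_{n,K,M}\le MS_n$. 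Because $\lceil Kr_n\rceil p_n\to K$ we have $\E e^{tS_n}=(1+p_n(e^t-1))^{\lceil Kr_n\rceil}\to e^{K(e^t-1)}$, so $\sup_n\E e^{tS_n}<\infty$ for every $t$. Hence $\sqrt{T_{n,K,M}^+}+Y_{n,K,M}\le c(M)S_n$ for a constant $c(M)$, and therefore $\sup_n\E\exp\{c(\sqrt{T_{n,K,M}^+}+Y_{n,K,M})\}<\infty$ for every $c>0$. In particular all mixed moments of $(T_{n,K,M}^+,Y_{n,K,M})$ are bounded uniformly in $n$, the products $(T_{n,K,M}^+)^a(Y_{n,K,M})^b$ are uniformly integrable, and the family $(T_{n,K,M}^+,Y_{n,K,M})$ is tight in $[0,\infty)^2$.

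Next I would run a subsequence argument. By tightness, along any subsequence there is a further one along which $(T_{n,K,M}^+,Y_{n,K,M})$ converges weakly to some $(T_{K,M}^+,Y_{K,M})$. Uniform integrability gives $\E[(T_{K,M}^+)^a(Y_{K,M})^b]=\lim_n\E[(T_{n,K,M}^+)^a(Y_{n,K,M})^b]=\mu_{a,b,K,M}$ by Lemma~\ref{lm:moment_limit}, so every subsequential weak limit has the \emph{same} moment sequence $(\mu_{a,b,K,M})$. It remains to check this sequence is determinate on $[0,\infty)^2$, which I would do via the Cram\'er--Wold device: for $s,t\ge0$ the variable $sT_{K,M}^++tY_{K,M}$ satisfies $\sqrt{sT_{K,M}^++tY_{K,M}}\le\sqrt{s}\,\sqrt{T_{K,M}^+}+\sqrt{t\,Y_{K,M}}$, whose right‑hand side has a finite moment generating function near $0$ (passing the uniform bound above to the limit by the Portmanteau/Fatou inequality along the subsequence). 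Thus $\E\exp\{c\sqrt{sT_{K,M}^++tY_{K,M}}\}<\infty$ for some $c>0$, and by the Stieltjes moment condition \cite{Stieltjes} (in the form: a nonnegative variable whose square root has a finite exponential moment is determined by its moments) the law of $sT_{K,M}^++tY_{K,M}$ is the unique one with its moments, which are fixed linear combinations of $(\mu_{a,b,K,M})$. Hence $sT_{K,M}^++tY_{K,M}$ has the same law for every subsequential limit and every $s,t\ge0$; letting $s,t\ge0$ vary and using injectivity of the joint Laplace transform on $[0,\infty)^2$, all subsequential limits coincide. Therefore $(T_{n,K,M}^+,Y_{n,K,M})$ converges in distribution to a well‑defined $(T_{K,M}^+,Y_{K,M})$, and the uniform integrability noted above (together with Lemma~\ref{lm:moment_limit}) upgrades this to convergence of all mixed moments.

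The main obstacle is exactly this determinacy step. The moments of $T_{n,K,M}^+$ can grow like $a^{2a}$ — this happens already when the high‑degree core of $G_{n,K}^+$ is complete, where $T_{n,K,M}^+=\binom{S_n}{2}$ with $S_n$ essentially Poisson — so the plain Carleman condition $\sum_a\mu_{2a,0,K,M}^{-1/2a}=\infty$ fails and one cannot invoke the usual version of the Stieltjes/Hamburger criterion. The way around this is the comparison $T_{n,K,M}^+\le\binom{S_n}{2}$, $Y_{n,K,M}\le MS_n$, which replaces the failing condition by the (stronger, but here readily verified) statement that $\sqrt{T_{n,K,M}^+}+Y_{n,K,M}$ has uniformly bounded exponential moments; the square‑root form of the moment condition is still sufficient for determinacy, and this is what rescues the method of moments in this setting.
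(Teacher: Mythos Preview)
Your proof is correct and rests on the same two domination bounds as the paper's argument, namely $T_{n,K,M}^+\le\binom{S_n}{2}$ and $Y_{n,K,M}\le MS_n$ with $S_n\sim\dBin(\lceil Kr_n\rceil,p_n)$. The difference is in how you extract determinacy from these bounds. The paper proceeds more directly: using Lata\l a's moment inequality for binomials it obtains $\mu_{a,0,K,M}\le (2CKa)^{2a}$ and $\mu_{0,b,K,M}\le (MCKb)^b$, and then simply checks the multivariate Stieltjes criterion $\sum_{a\ge1}(\mu_{a,0,K,M}+\mu_{0,a,K,M})^{-1/(2a)}=\infty$ from \cite{stieltjes_multivariate}, which holds because the summand is $\gtrsim 1/a$. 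Your ``main obstacle'' paragraph is therefore a bit off: the Hamburger form $\sum_a\mu_{2a,0,K,M}^{-1/(2a)}$ may diverge slowly or fail, but these are nonnegative variables and the correct Stieltjes sum $\sum_a\mu_{a,0,K,M}^{-1/(2a)}$ \emph{does} diverge (since $\mu_{a,0,K,M}^{1/(2a)}\lesssim a$), so no rescue via square roots is needed. Your route through uniform exponential moments of $\sqrt{T_{n,K,M}^+}+Y_{n,K,M}$, a tightness/subsequence argument, and a Cram\'er--Wold reduction to one-dimensional determinacy is perfectly valid and self-contained, but it is more elaborate than the paper's one-line application of the multivariate Stieltjes condition.
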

	
\begin{proof} Recall that $V_{n, K, M}^+ := \{ v \in V_{G_n,K}^+ : d_v\leq M r_n\}$. Then from \eqref{eq:Z_II},
 $$Y_{n,K,M} = \frac{1}{r_n} \sum_{u \in V_{n, K, M}^+} d_u^{\pm} X_u \leq M \sum_{u \in V_{n, K, M}^+} X_u \sim M \dBin(|V_{n, K, M}^+|,  p_n).$$ Note that $|V_{n, K, M}^+| \leq \ceil{K/p_n}$, which implies that $Y_{n,K,M}$ is stochastically dominated by the random variable $M \dBin(\ceil{K/p_n},  p_n)$. This implies, since $\mu_{0, b, K, M}= \lim_{n \rightarrow \infty}\E[Y_{n,K, M}^b]$ exists (by Lemma \ref{lm:moment_limit}), for all $b\ge 1$, 
 \begin{equation}\label{eq:moment_b}
\mu_{0, b, K, M} = \lim_{n \rightarrow \infty}\E[Y_{n,K, M}^b] \leq (MCKb)^b,
\end{equation}
using the bound $\E[\dBin(n, p)^a] \leq C^a \left(\frac{a}{\log a} \right)^a \max \{np, (np)^a\} \leq  C^a  a^a \max \{np, (np)^a\}$, for $a \ge 3$ and some universal constant $C<\infty$, \cite[Corollary 3]{latala}.

Next, define $S_{n,K, M} = \frac{1}{2}\sum_{u\in V_{n, K, M}^+}\sum_{v\in V_{n, K, M}^+\setminus \{u\}} X_u X_v$. Then, $$S_{n,K, M} \stackrel{D}{=} \binom{R_{n,K,M}}{2}, \quad \text{ where } R_{n,K,M} \sim \dBin(|V_{n, K, M}^+|,p_n),$$ and $T_{n,K,M}^+ \leq S_{n,K, M}$. Again using $|V_{n, K, M}^+| \leq \ceil{K r_n}$ and Lemma \ref{lm:moment_limit}, it follows that,  for all $a\ge 3$, 
\begin{equation}\label{eq:moment_a}
\mu_{a, 0, K, M}= \lim_{n\rightarrow \infty} \E[({T_{n,K, M}^+})^a] \leq \limsup_{n\rightarrow \infty}\E[R_{n,K,M}^{2a}]  \leq (2 CKa)^{2a},
\end{equation}
using bounds on moments of the binomial distribution \cite[Corollary 3]{latala}, as in \eqref{eq:moment_b}.

Combining \eqref{eq:moment_b} and \eqref{eq:moment_a} gives, 
$$\sum_{a=1}^\infty \frac{1}{\left(\mu_{a, 0, K, M}+\mu_{0, a, K, M}\right)^{\frac{1}{2a}}} \geq \frac{1}{\sqrt{2}}\sum_{a=1}^\infty \frac{1}{\max\{2CKa, \sqrt{MCKa}\}}=\infty.$$
Therefore, by the Stieltjes condition for multivariate distributions \cite[Page 21]{stieltjes_multivariate} (recall that the existence of the limiting mixed moments $\mu_{a, b, K, M}$, for all positive integers $a, b$, follows from Lemma \ref{lm:moment_limit}), implies that $(T_{n,K, M}^+,Y_{n,K, M})$ converges in distribution and in all mixed moments to some random variable $(T_{K,M}^+,Y_{K,M})$. This completes the proof. 
\end{proof}

\subsection{Deriving the Limiting Distribution of $(T_{n,K,M}^+, Z_{n,K,M})$}
\label{sec:pfquadratic5}

Let $G_n$ be a sequence of graphs satisfying the assumptions of Lemma \ref{lm:moment_limit}, with the functions $W_K: [0, K]^2 \rightarrow [0, 1]$ and $d_K:[0, K] \rightarrow [0, 1]$. Denote by 
\begin{align}\label{eq:WKM}
W_{K, M}(x, y)=W_K(x, y) \bm  1\{\max\{d_K(x), d_K(y)\} \leq M\}. 
\end{align} 
For $W_{K, M}$, define its $L$-step piecewise constant approximation (note that it has $K^2L^2$ blocks) as follows: 
\begin{align*}
W_{K, M}^{(L)}(x,y) &:=\sum_{1\leq a, b \leq K L}  r^{(L)}_{K, M}(a, b) \bm 1\left\{x \in \left(\frac{a-1}{L}, \frac{a}{L}\right] \right\} \bm 1\left\{y \in \left(\frac{b-1}{L}, \frac{b}{L}\right] \right\}.
\end{align*}
where  
\begin{align}\label{eq:r}
r^{(L)}_{K, M}(a, b):= L^2 \int_{\frac{a-1}{L}}^{\frac{a}{L}} \int_{\frac{b-1}{L}}^{\frac{b}{L}} W_{K,M}(u,v)  \mathrm du \mathrm dv.
\end{align} 
By Proposition \ref{ppn:block}, $\lim_{L \rightarrow \infty}||W_{K, M}^{(L)}-W_{K,M}||_{\square([0, K]^2)} \leq \lim_{L \rightarrow \infty}||W_{K, M}^{(L)}-W_{K,M}||_{L_1([0, K]^2)} \rightarrow 0$.

\begin{defn}
Given a function $H:[0,K]^2\rightarrow [0,1]$ and a positive integer $N$, the $H$-random graph on $N$ vertices (denoted by $\cG(N, H)$) is the simple undirected labelled random graph with vertex set $[N]:=\{1,2,\ldots,N\}$ and edges are present independently, with
$$\P((u, v) \in E(\cG(N,H))=H\left( \frac{K u}{N},  \frac{K v}{N}\right), \quad \text{ for } 1\le u < v \le N.$$ 
\end{defn}

Fix $K \geq1$. Let $G_{n, K, M}^{(L)}$ be the $W_{K, M}^{(L)}$-random graph $\cG(\ceil{Kr_n}, W_{K, M}^{(L)})$, independent of $\{X_v\}_{v \in V(G_n)}$. 
For $u \in [0, K]$, define the function 
\begin{align}\label{eq:delta_KM}
\Delta_{K, M}(u):=\left(d_K(u)-\int_0^K W_K(u, v)\mathrm dv \right)  \bm  1\{d_K(u) \leq M\}.
\end{align} 
Define the $L$-step approximation of $\Delta_{K, M}^{(L)}$ as follows: 
\begin{align}\label{eq:delta_KM_II}
\Delta_{K, M}^{(L)}(x)= \sum_{a=1}^{KL} \eta_{K, M}^{(L)}(a) \bm 1\left\{x \in \left(\frac{a-1}{L}, \frac{a}{L}\right]\right\} = \eta_{K, M}^{(L)}(\ceil{L x}), 
\end{align}
where $\eta_{K, M}^{(L)}(a):= L \int_{\frac{(a-1)}{L}}^{\frac{a}{L}} \Delta_{K, M}(u) \mathrm du$. By Proposition \ref{ppn:block}, $||\Delta_{K, M}^{(L)}-\Delta_{K, M}||_{L_1([0, K])} \rightarrow 0$, as $L \rightarrow \infty$.

Recall that  $A(G_{n, K, M}^{(L)})=((A(G_{n, K, M}^{(L)})(u, v)))_{1 \leq u, v \leq \ceil{K r_n}}$ is the adjacency matrix of the graph $G_{n, K, M}^{(L)}$. Let $N =\ceil{Kr_n}$ and define 
\begin{align}\label{eq:T}
\overline{T}_{n, L, K, M}^+:=\sum_{1 \leq u < v \leq N} A(G_{n, K, M}^{(L)})(u, v)  X_u X_v, \quad 
and \quad \overline Y_{n, L, K,M} = \sum_{u=1}^{N} \eta_{K, M}^{(L)}\left(\left\lceil\frac{K L u}{N}\right\rceil\right) X_u.
\end{align} 
 
\begin{lem}\label{lm:t1t2L} Fix an integer $K \geq 1$ and $ M > 0$.  Under the assumptions of Lemma \ref{lm:moment_limit}, for $t_1, t_2 \geq 0$, 
\begin{align}
\lim_{L \rightarrow \infty} \lim_{n\rightarrow \infty}\E \exp&\Big\{-t_1 \overline{T}_{n, L, K, M}^+-t_2 \overline Y_{n,L,K,M} \big\}\nonumber \\
&=\E \exp\left\{\frac{1}{2}\int_0^K \int_0^K \phi_{t_1, K, M}(x,y) \mathrm d N(x)\mathrm dN(y)-t_2 \int_0^K\Delta_{K, M}(x)\mathrm d N(x)\right\},
\end{align}
where 
\begin{itemize}
\item[--]$\{N(t), t \geq 0\}$ is a homogenous Poisson process of rate $1$, 
\item[--] $\phi_{t_1, K, M}(x, y):=\log(1-W_{K, M}(x, y)+W_{K, M}(x, y)e^{-t_1})$, where $W_{K, M}$ is as defined in \eqref{eq:WKM}, and  
\item[--] $\Delta_{K, M}(x)$ as in \eqref{eq:delta_KM}.
\end{itemize}
\end{lem}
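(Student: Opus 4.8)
The overall strategy is to first take the limit as $n\to\infty$ for fixed $L$, and then send $L\to\infty$. For fixed $L$, the graph $G_{n,K,M}^{(L)}$ is a $W_{K,M}^{(L)}$-random graph with $N=\ceil{Kr_n}$ vertices, and $W_{K,M}^{(L)}$ is a block function with blocks of width $1/L$. Conditioning on $\bm X=(X_1,\dots,X_N)$, the edges of $G_{n,K,M}^{(L)}$ are independent, so $\overline T_{n,L,K,M}^+$ is (conditionally on $\bm X$) a sum of independent Bernoulli variables, one for each pair $(u,v)$ with $X_u=X_v=1$, with success probability $W_{K,M}^{(L)}(Ku/N,Kv/N)$; hence its conditional Laplace transform is an explicit product. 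The key observation is that this product depends on $\bm X$ only through the block-counts $S_a:=\sum_{u:\,\ceil{KLu/N}=a}X_u$ for $a\in[KL]$. Each $S_a$ is $\dBin(m_{a,n},p_n)$ with $m_{a,n}\sim N/(KL)=r_n/L+o(r_n)$, and the $S_a$ are independent; since $p_n=1/r_n$, we have $S_a\dto\dPois(1/L)$ jointly independent, as $n\to\infty$. Likewise $\overline Y_{n,L,K,M}=\sum_{a=1}^{KL}\eta_{K,M}^{(L)}(a)S_a$ is a linear functional of the same block-counts. Therefore
\begin{align*}
\lim_{n\to\infty}\E\exp\Big\{-t_1\overline T_{n,L,K,M}^+-t_2\overline Y_{n,L,K,M}\Big\}
=\E\exp\left\{\sum_{1\le a<b\le KL}\widetilde\phi_{t_1}(a,b)\,P_aP_b+\tfrac12\sum_{a=1}^{KL}\widetilde\phi_{t_1}(a,a)\,P_a(P_a-1)-t_2\sum_{a=1}^{KL}\eta_{K,M}^{(L)}(a)P_a\right\},
\end{align*}
where $P_a\sim\dPois(1/L)$ are independent and $\widetilde\phi_{t_1}(a,b):=\log(1-r_{K,M}^{(L)}(a,b)+r_{K,M}^{(L)}(a,b)e^{-t_1})$. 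To justify this interchange of limit and expectation I would note that all the relevant moments are uniformly bounded (the block-counts are stochastically dominated by $\dBin(\ceil{Kr_n},p_n)$, whose moments converge), so convergence in distribution of the finitely many block-counts plus uniform integrability of the bounded exponential gives convergence of the Laplace transforms; since $t_1,t_2\ge0$ the integrands $e^{-t_1\cdot-t_2\cdot}$ are bounded by $1$ and this is immediate by bounded convergence once $(S_1,\dots,S_{KL})\dto(P_1,\dots,P_{KL})$.

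The second step is to recognize the right-hand side above as a Poisson stochastic integral over $[0,K]^2$ (resp. $[0,K]$) against the step functions $\phi_{t_1,K,M}^{(L)}$ and $\Delta_{K,M}^{(L)}$. Concretely, if $\{N(t),t\ge0\}$ is a rate-$1$ Poisson process and $N_a:=N(a/L)-N((a-1)/L)\sim\dPois(1/L)$ are its independent increments over the dyadic-type intervals $((a-1)/L,a/L]$, then by the computation rules for multiple Itô–Poisson integrals (Appendix \ref{sec:integral}, cf. Example \ref{integralB}),
\begin{align*}
\tfrac12\int_0^K\int_0^K\phi_{t_1,K,M}^{(L)}(x,y)\,\mathrm dN(x)\,\mathrm dN(y)=\sum_{1\le a<b\le KL}\widetilde\phi_{t_1}(a,b)N_aN_b+\tfrac12\sum_{a=1}^{KL}\widetilde\phi_{t_1}(a,a)N_a(N_a-1),
\end{align*}
and $\int_0^K\Delta_{K,M}^{(L)}(x)\,\mathrm dN(x)=\sum_{a=1}^{KL}\eta_{K,M}^{(L)}(a)N_a$, where $\phi_{t_1,K,M}^{(L)}(x,y):=\log(1-W_{K,M}^{(L)}(x,y)+W_{K,M}^{(L)}(x,y)e^{-t_1})$. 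Thus the $n\to\infty$ limit equals $\E\exp\{\tfrac12\int_0^K\int_0^K\phi_{t_1,K,M}^{(L)}\,\mathrm dN\,\mathrm dN-t_2\int_0^K\Delta_{K,M}^{(L)}\,\mathrm dN\}$.

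The final step is to let $L\to\infty$. We have $\|W_{K,M}^{(L)}-W_{K,M}\|_{L_1([0,K]^2)}\to0$ and $\|\Delta_{K,M}^{(L)}-\Delta_{K,M}\|_{L_1([0,K])}\to0$ by Proposition \ref{ppn:block}; since $|\log(1-w+we^{-t_1})|\le t_1 w$ for $w\in[0,1]$ (so $\|\phi_{t_1,K,M}^{(L)}-\phi_{t_1,K,M}\|_{L_1}\le t_1\|W_{K,M}^{(L)}-W_{K,M}\|_{L_1}\to0$), the stochastic integrals converge: $\int\int\phi_{t_1,K,M}^{(L)}\,\mathrm dN\,\mathrm dN\to\int\int\phi_{t_1,K,M}\,\mathrm dN\,\mathrm dN$ in $L^1$ (or in probability) by the $L_1$-continuity of multiple Poisson stochastic integrals (Appendix \ref{sec:integral}), and similarly for the linear integral. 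Because both exponents are nonpositive ($\phi_{t_1,K,M}\le0$ as $t_1\ge0$, $\Delta_{K,M}\ge0$, $t_2\ge0$), the exponentials are bounded by $1$, so a final application of bounded convergence delivers the claimed identity. The main obstacle is the first step: carefully setting up the conditioning on block-counts and checking that the joint convergence of $(S_1,\dots,S_{KL})$ to independent Poissons — together with the uniform-integrability bookkeeping needed to pass the limit inside the expectation — is airtight; once that is in place, steps two and three are essentially bookkeeping with the stochastic-integral calculus and $L_1$-approximation already developed in the appendix.
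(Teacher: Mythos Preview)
Your proposal is correct and follows essentially the same route as the paper: condition on $\bm X$ to express the conditional Laplace transform of $\overline T^+_{n,L,K,M}$ as a product depending only on the block-counts $S_a$, use independence and Poisson approximation to send $n\to\infty$, identify the resulting expression with a Poisson stochastic integral of step functions, and finally let $L\to\infty$ via the $L_1$-continuity of the stochastic integral together with dominated convergence. The only organizational difference is that you absorb the diagonal contribution $\sum_a\widetilde\phi_{t_1}(a,a)\binom{P_a}{2}$ directly into $\tfrac12\int\int\phi_{t_1,K,M}^{(L)}\,\mathrm dN\,\mathrm dN$ (which is legitimate by the block-function computation in Example~\ref{integralB}), whereas the paper defines the elementary integrand to vanish on diagonal blocks and shows the leftover diagonal sum tends to $0$ in $L_1$ separately; both bookkeepings arrive at the same limit. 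One small correction: the Lipschitz constant of $w\mapsto\log(1-w+we^{-t_1})$ on $[0,1]$ is $e^{t_1}-1$, not $t_1$, but any finite constant depending on $t_1$ suffices for your $L_1$-convergence step.
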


\begin{proof} Throughout the proof denote $N=\ceil{K r_n}$. The linear part (recall \eqref{eq:T}) can be written as 
\begin{equation}\label{eqybar}
\overline Y_{n, L, K,M} =  \sum_{a=1}^{K L} \eta_{K, M}^{(L)}(a) X_{n}(a),
\end{equation}
where $X_n(a):= \sum_{u=1}^{N} \bm 1\left\{ \left\lceil\frac{KL u}{N }\right\rceil=a \right\} X_u$. Note that 
\begin{align}\label{eq:Xna}
X_n(a)\sim \dBin\left( \sum_{u=1}^{N} \bm 1\left\{ \left\lceil\frac{KL u}{N }\right\rceil=a \right\}, p_n\right),
\end{align} 
and $\{X_n(a)\}_{1\le a\le KL}$ are mutually independent. 

For notational brevity, take $\sigma_n=K/N=K/\ceil{Kr_n}$. For the quadratic term, taking an expectation over the random graph $G_{n, L}^{(K)}$ we get,  
\begin{align*}
\E &\left[ e^{-t_1  \overline{T}_{n, L, K, M}^+ } \Big| X_1,\ldots,X_N\right]\\ 
&\;\;\;\;\;\; =\prod_{1\le u < v \le N} \E \left[e^{-t_1 A(G_{n, K, M}^{(L)})(u, v)  X_u X_v } \Big|X_1,\ldots,X_N\right]\\ 
&\;\;\;\;\;\; =\prod_{1\le u < v \le N}\left(1-W_{K, M}^{(L)}\left(\sigma_nu, \sigma_nv \right)+W_{K, M}^{(L)}\left(\sigma_nu,\sigma_nv \right)e^{-t_1 X_u X_v}\right)\\
&\;\;\;\;\;\; =\prod_{1\le u < v \le N}\left(1-W_{K, M}^{(L)}\left(\sigma_nu, \sigma_nv \right)+W_{K, M}^{(L)}\left(\sigma_nu,\sigma_nv \right)e^{-t_1}\right)^{X_u X_v},
\end{align*}
where the last equality uses the fact that $X_uX_v$ is a Bernoulli random variable. Using the definition of $W_{K, M}^{(L)}$, the RHS above equals
\begin{align}\label{eq:quad}
\prod_{1\le a < b \le KL} \left(\varphi_{t_1, L, K}(a, b) \right)^{X_n(a)X_n(b)} \prod_{a=1}^{KL} \left( \varphi_{t_1, L, K}(a, a)\right)^{{X_n(a)}\choose 2},
\end{align} 
where $\varphi_{t_1, L, K}(a, b):=1-r^{(L)}_{K, M}(a,b)+r^{(L)}_{K, M}(a,b)e^{-t_1}$ (recall \eqref{eq:r}). 

On letting $n\rightarrow\infty$, we have
$$\left\{X_n(1),X_n(2) \ldots,X_n(KL)\right\}\stackrel{D}{\rightarrow}\left\{\partial N(1), \partial N(2)\ldots,\partial N(KL)\right\},$$
where $\{N(t): 0 \leq t \leq K\}$ is a Poisson process of rate $1$ and $\partial N(a):=N(\frac{a}{L})-N(\frac{a-1}{L})\sim \dPois(1/L)$ (by \eqref{eq:Xna} and the Poisson approximation to the binomial distribution). Note that $\{\partial N(1), \partial N(2),$ $\ldots,\partial N(KL)\}$ is independent, since increments of the Poisson process are independent. Therefore, by \eqref{eqybar}, \eqref{eq:quad} and the continuous mapping theorem, as $n \rightarrow \infty$, 
\begin{align}\label{eq:YT}
\left(\overline Y_{n, L, K, M}, \E\left[e^{-t_1  \overline{T}_{n, L, K, M}^+ } \Big|X_1,\ldots,X_N\right] \right) \stackrel{D}{\rightarrow}& (\psi_{L,K, M}, \theta_{L, K, M}),
\end{align}
where $\psi_{L,K, M}:=\sum_{a=1}^{KL} \eta_{K, M}^{(L)}(a) \partial N(a)$ and 
$$\theta_{L, K, M}:= \prod_{1\le a < b \le KL}   \left(\varphi_{t_1, L, K}(a, b)\right)^{\partial N(a) \partial N(b)}  \prod_{a=1}^{KL} \left(\varphi_{t_1, L, K}(a, a) \right)^{{\partial N(a)}\choose 2}.$$ 
For $x,y\in [0,K]$, defining 
\begin{eqnarray*}
\phi_{t_1, L, K, M}(x, y):=&\log \varphi_{t_1, L, K} (\lceil Lx\rceil ,\lceil Ly\rceil ) & \text{ if }\lceil Lx \rceil\ne \lceil Ly \rceil,\\
:=&0 &\text{ if }\lceil Lx \rceil= \lceil Ly \rceil
\end{eqnarray*} 
gives, 
\begin{align}\label{eq:theta_LKM}
\log \theta_{L, K, M} &=\frac{1}{2}\sum_{1\le a\ne b\le KL}\partial N(a) \partial N(b)\log\varphi_{t_1, L, K}(a, b) + \sum_{a=1}^{KL} {{\partial N(a)}\choose 2}\log \varphi_{t_1, L, K}(a, a) \nonumber \\
&=\frac{1}{2}\int_{[0,K]^2} \phi_{t_1, L, K, M}(x, y) \mathrm dN(x)\mathrm dN(y)+  \sum_{a=1}^{KL} {{\partial N(a)}\choose 2}\log \varphi_{t_1, L, K}(a, a),
\end{align} 
using the definition of the stochastic integral for elementary functions (Definition \ref{defn:integral_elementary}).

To begin with we consider the first term in \eqref{eq:theta_LKM} above. Recall the definition of $\phi_{t_1, K, M}(x, y)$ from the statement of the Lemma \ref{lm:t1t2L}. Using 
\begin{align}\label{eq:WLKM}
\lim_{L \rightarrow \infty}||W_{K, M}^{(L)}-W_{K, M}||_{L_1([0, K]^2)}=0,
\end{align} 
and the Dominated Convergence Theorem (note that the functions $\phi_{t_1,L,K,M}$ and $\phi_{t_1,K,M}$ are bounded above by $\log(1+e^{-t_1})$ and bounded below by $-t_1$), gives $||\phi_{t_1, L, K, M}(x, y) - \phi_{t_1, K, M}(x, y)||_{L_1([0, K]^2)} \rightarrow 0$, as $L \rightarrow \infty$, for every $t_1 \geq 0$ fixed. Then, by Proposition \ref{ppn:limit},  as $L \rightarrow \infty$,
\begin{align}\label{eq:L11}
\frac{1}{2}\int_{[0,K]^2} \phi_{t_1, L, K, M}(x, y) \mathrm dN(x)\mathrm dN(y)\stackrel{P}{\rightarrow} \frac{1}{2}\int_{[0,K]^2} \phi_{t_1, K, M}(x, y) \mathrm dN(x)\mathrm dN(y).
\end{align}
Next, consider the second term in \eqref{eq:theta_LKM}: Using $\E{{\partial N(a)}\choose 2} \lesssim 1/L^2$ and $\sup_{a}|\log \varphi_{t_1, L, K}(a, a)| \lesssim_{ t_1} 1$ gives, 
\begin{align*}
\sum_{a=1}^{KL} \E {{\partial N(a)}\choose 2}\log \varphi_{t_1, L, K}(a, a) \lesssim_{K, t_1} \frac{1}{L}.
\end{align*}
Therefore, 
\begin{align}\label{eq:L12}
\sum_{a=1}^{KL} {{\partial N(a)}\choose 2}\log \varphi_{t_1, L, K}(a, a) \stackrel{L_1}\rightarrow 0,
\end{align}
as $L \rightarrow \infty$. The limits in \eqref{eq:L11} and \eqref{eq:L12} combined with \eqref{eq:theta_LKM} gives, 
\begin{align}\label{eq:L1}
\log \theta_{L, K, M} \stackrel{P}{\rightarrow} \frac{1}{2}\int_{[0,K]^2} \phi_{t_1, K, M}(x, y) \mathrm dN(x)\mathrm dN(y).
\end{align}

Similarly, as $L \rightarrow \infty$, 
\begin{align}\label{eq:L2}
\psi_{L,K, M}:=\sum_{a=1}^{KL} \eta_{K, M}^{(L)}(a) \partial N(a) \pto \int_0^K \Delta_{K, M}(x) \mathrm dN(x).
\end{align}
Combining \eqref{eq:L1} and \eqref{eq:L2} with \eqref{eq:YT}, and another application of the dominated convergence theorem completes the proof of the lemma.
\end{proof}

Next, we show that the limiting distribution of $(T_{n,K,M}^+,Y_{n,K,M})$ is same as that of 
$(\overline{T}_{n, L, K, M}^+ , \overline Y_{n,L,K,M})$ derived above.

\begin{lem}\label{lm:t1t2} Fix $K, M \geq 1$.   Under the assumptions of Lemma \ref{lm:moment_limit}, $(T_{n,K,M}^+,Y_{n,K,M})$ converge in distribution and in moments, as $n\rightarrow\infty$, to $(T_{K,M}^+,Y_{K,M})$, with joint moment generating function \begin{align}\label{eq:t1t2}
\E \exp&\Big\{-t_1 T_{K,M}^+-t_2 Y_{K,M} \big\}\nonumber \\
&=\E \exp\left\{\frac{1}{2}\int_0^K \int_0^K \phi_{t_1, K, M}(x,y) \mathrm d N(x)\mathrm dN(y)-t_2 \int_0^K\Delta_{K, M}(x)\mathrm d N(x)\right\},
\end{align}
with $t_1,t_2\ge 0$, $\Delta_{K, M}(\cdot)$, $\{N(t), t \geq 0\}$, and $\phi_{t_1, K, M}(\cdot, \cdot)$ are as defined in Lemma \ref{lm:t1t2L}. Moreover, $(T_{n,K,M}^+,Z_{n,K,M})$ converge in distribution and in moments, as $n\rightarrow\infty$, to $(T_{K,M}^+,Z_{K,M})$,  with joint moment generating function 
\begin{align}\label{eq:t1t2_II}
\E \exp&\Big\{-t_1 T_{K,M}^+-t_2 Z_{K,M} \big\}\nonumber \\
&=\E \exp\left\{\frac{1}{2}\int_0^K \int_0^K \phi_{t_1, K, M}(x,y) \mathrm d N(x)\mathrm dN(y)-(1-e^{-t_2}) \int_0^K\Delta_{K, M}(x)\mathrm d N(x)\right\}. 
\end{align}
\end{lem}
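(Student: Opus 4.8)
The plan is to combine Lemma~\ref{lm:limit_existence} --- which already gives that $(T_{n,K,M}^+,Y_{n,K,M})$ converges, in distribution and in all mixed moments, to some $(T_{K,M}^+,Y_{K,M})$ whose moment sequence satisfies the Carleman (Stieltjes) condition --- with Lemma~\ref{lm:t1t2L}, which computes the iterated limit $\lim_{L\to\infty}\lim_{n\to\infty}$ of the joint Laplace transform of the random-graph proxies $(\overline{T}_{n,L,K,M}^+,\overline{Y}_{n,L,K,M})$. The bridge is a moment comparison: I would show that in the iterated limit these proxies have the \emph{same} mixed moments as $(T_{n,K,M}^+,Y_{n,K,M})$, after which moment-determinacy forces the Laplace transform of $(T_{K,M}^+,Y_{K,M})$ to equal the right-hand side of \eqref{eq:t1t2}; the statement for $(T_{n,K,M}^+,Z_{n,K,M})$ then follows from a short Poissonization argument based on \eqref{eq:t1t2}.

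For the moment comparison, I would first recall from Lemma~\ref{lm:moment_limit} and its proof via Lemma~\ref{lm:WH} that $\mu_{a,b,K,M}=\lim_{n\to\infty}\E[(T_{n,K,M}^+)^a(Y_{n,K,M})^b]$ is a finite sum, over graphs $H$ with at most $a$ edges and at most $b$ isolated vertices and over exponent vectors $\bm \eta$, of terms $c(H,\bm \eta)\int_{[0,K]^{|V(H)|}}t(H,W_K,\bm u)\prod_j\zeta_K(u_j)^{\eta_j}\bm 1\{d_K(u_j)\le M\}\,\mathrm d u_j$, with combinatorial coefficients $c(H,\bm \eta)\ge 0$ not depending on the kernels. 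Every isolated vertex of such an $H$ carries $\eta_j\ge 1$, so the corresponding factor there is $\Delta_{K,M}(u_j)^{\eta_j}$ (recall \eqref{eq:delta_KM}), while for the non-isolated vertices the product of the edge indicators turns $t(H,W_K,\bm u)$ into $t(H,W_{K,M},\bm u)$ (recall \eqref{eq:WKM}); hence $\mu_{a,b,K,M}=\sum_{H,\bm \eta}c(H,\bm \eta)\int_{[0,K]^{|V(H)|}}t(H,W_{K,M},\bm u)\prod_j\Delta_{K,M}(u_j)^{\eta_j}\,\mathrm d u_j$. Next, for fixed $L$ the random graph $G_{n,K,M}^{(L)}$ is independent of $\{X_v\}$ and has independent $\{0,1\}$-valued edges of mean $W_{K,M}^{(L)}(\tfrac{Ku}{N},\tfrac{Kv}{N})$, so expanding $(\overline{T}_{n,L,K,M}^+)^a(\overline{Y}_{n,L,K,M})^b$, integrating out the graph and then $\{X_v\}$ (using $X_v\in\{0,1\}$), and converting Riemann sums to integrals as $n\to\infty$, reproduces the same combinatorial sum with $W_{K,M},\Delta_{K,M}$ replaced by their step approximations $W_{K,M}^{(L)},\Delta_{K,M}^{(L)}$ and with the \emph{same} coefficients $c(H,\bm \eta)$. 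Letting $L\to\infty$ and using $\|W_{K,M}^{(L)}-W_{K,M}\|_{\square([0,K]^2)}\le\|W_{K,M}^{(L)}-W_{K,M}\|_{L_1([0,K]^2)}\to 0$ and $\|\Delta_{K,M}^{(L)}-\Delta_{K,M}\|_{L_1([0,K])}\to 0$ (noted after \eqref{eq:r} and \eqref{eq:delta_KM_II}), together with $0\le\Delta_{K,M}^{(L)},\Delta_{K,M}\le M$, a telescoping argument identical to the proof of Lemma~\ref{lm:WH} shows each integral converges to its $W_{K,M},\Delta_{K,M}$ analogue, so $\lim_{L\to\infty}\lim_{n\to\infty}\E[(\overline{T}_{n,L,K,M}^+)^a(\overline{Y}_{n,L,K,M})^b]=\mu_{a,b,K,M}$ for all $a,b$.

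To finish \eqref{eq:t1t2}, for each $L$ the limit $\lim_{n\to\infty}\E\exp\{-t_1\overline{T}_{n,L,K,M}^+-t_2\overline{Y}_{n,L,K,M}\}$ exists (see \eqref{eq:YT}) and, being equal to $1$ at the origin, is the Laplace transform of a nonnegative pair $(\tilde T_L,\tilde Y_L)$; by Lemma~\ref{lm:t1t2L} these converge as $L\to\infty$ to the right-hand side of \eqref{eq:t1t2}, again a Laplace transform continuous at the origin, so by the continuity theorem $(\tilde T_L,\tilde Y_L)$ converges weakly to a pair $(\tilde T,\tilde Y)$ with that transform. The moments of $(\overline{T}_{n,L,K,M}^+,\overline{Y}_{n,L,K,M})$ are bounded uniformly in $n$ and $L$ by binomial moments, exactly as in the proof of Lemma~\ref{lm:limit_existence} (using $\overline{T}_{n,L,K,M}^+\le\binom{R}{2}$ and $\overline{Y}_{n,L,K,M}\le M R'$ with $R,R'\sim\dBin(\lceil Kr_n\rceil,p_n)$, irrespective of the random graph), hence uniformly integrable; therefore $(\tilde T,\tilde Y)$ has mixed moments $\lim_L\lim_n\E[(\overline{T}_{n,L,K,M}^+)^a(\overline{Y}_{n,L,K,M})^b]=\mu_{a,b,K,M}$. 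Since $\{\mu_{a,b,K,M}\}$ satisfies the Carleman condition (proof of Lemma~\ref{lm:limit_existence}), it determines its law, so $(\tilde T,\tilde Y)\stackrel{D}{=}(T_{K,M}^+,Y_{K,M})$ and \eqref{eq:t1t2} follows. For \eqref{eq:t1t2_II}, conditionally on $\{X_v\}$ the variable $Z_{n,K,M}$ is $\dBin(r_nY_{n,K,M},1/r_n)$ (recall \eqref{eq:Z_II}) and is built from the auxiliary Bernoullis $\{J_{uv}\}$ independent of $\{X_v\}$, so $\E[e^{-t_2Z_{n,K,M}}\mid\{X_v\}]=(1-\tfrac1{r_n}(1-e^{-t_2}))^{r_nY_{n,K,M}}$ and hence $\E\exp\{-t_1T_{n,K,M}^+-t_2Z_{n,K,M}\}=\E[e^{-t_1T_{n,K,M}^+}(1-\tfrac1{r_n}(1-e^{-t_2}))^{r_nY_{n,K,M}}]$; this integrand is at most $1$, and by the joint weak convergence from Lemma~\ref{lm:limit_existence} together with $(1-\tfrac1{r_n}(1-e^{-t_2}))^{r_ny}\to e^{-(1-e^{-t_2})y}$ uniformly on compacts, it passes in the limit to $\E[e^{-t_1T_{K,M}^+-(1-e^{-t_2})Y_{K,M}}]$, which by \eqref{eq:t1t2} with $t_2$ replaced by $1-e^{-t_2}$ equals the right-hand side of \eqref{eq:t1t2_II}; here $Z_{K,M}$ is defined via $Z_{K,M}\mid(T_{K,M}^+,Y_{K,M})\sim\dPois(Y_{K,M})$. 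Convergence of the mixed moments of $(T_{n,K,M}^+,Z_{n,K,M})$ follows since $\E[Z_{n,K,M}^b\mid\{X_v\}]$ is a degree-$b$ polynomial in $Y_{n,K,M}$ (a Touchard-type expansion) with coefficients converging as $n\to\infty$, which combined with the moment convergence of $(T_{n,K,M}^+,Y_{n,K,M})$ yields convergence to the corresponding mixed moments of $(T_{K,M}^+,Z_{K,M})$; the latter again satisfy the Carleman condition.

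I expect the main obstacle to be the moment comparison in the second paragraph: one must check that integrating out the random graph $G_{n,K,M}^{(L)}$ reproduces precisely the combinatorial coefficients $c(H,\bm \eta)$ arising from the true statistic --- in particular that the level-$M$ degree truncations are exactly the ones already encoded in $W_{K,M}$ and $\Delta_{K,M}$ --- and then push the step-function approximations through the homomorphism-type integrals using only cut-norm and $L_1$ convergence, which is where the bulk of the bookkeeping lies.
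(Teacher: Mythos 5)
Your proposal is correct and follows the paper's own proof in all essential respects: the same moment comparison between the random-graph proxies $(\overline{T}_{n,L,K,M}^+,\overline{Y}_{n,L,K,M})$ and $(T_{n,K,M}^+,Y_{n,K,M})$ via the combinatorial expansion of Lemmas~\ref{lm:moment_limit}--\ref{lm:WH}, the Laplace-transform computation of Lemma~\ref{lm:t1t2L}, the Stieltjes/Carleman moment determinacy from Lemma~\ref{lm:limit_existence}, and the Poissonization $\E[e^{-t_2Z_{n,K,M}}\mid\{X_v\}]=(1-\tfrac{1}{r_n}(1-e^{-t_2}))^{r_nY_{n,K,M}}$ for \eqref{eq:t1t2_II}. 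The only cosmetic difference is that you pass through intermediate weak limits $(\tilde T_L,\tilde Y_L)$ in $n$ and then send $L\to\infty$, where the paper extracts a diagonal subsequence $(n_j,L_j)$; the two bookkeeping devices are interchangeable here.
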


\begin{proof} We begin by computing $\lim_{L \rightarrow \infty}\lim_{n\to \infty} \E\left[(\overline T_{n,L,K,M}^+)^a (\overline Y_{n,L, K,M})^b\right]$. To this end,  let $\cN_{N}^{a, b}$ be the collection of all $(a+b)$-tuples of the form
$$\bm e=((u_1, v_1),\ldots ,(u_a, v_a), u_1', \ldots, u_b'),$$ where  $u_1, v_1,\ldots ,u_a, v_a, u_1', \ldots, u_b' \in [N]:=\{1, 2, \ldots, N\}$, and $u_i < v_i$, for $1 \leq i \leq a$. Then recalling \eqref{eq:T}, it follows that 
\begin{align}\label{eq:TY_ab_expectation}
\E & \left[(\overline T_{n,L,K,M}^+)^a (\overline Y_{n,L, K,M})^b\right]  \nonumber \\ 
&= \sum_{\bm e \in \cN_{N}^{a, b}} \E\left[ \prod_{s=1}^a A(G_{n, K, M}^{(L)})(u_s, v_s) X_{u_s} X_{v_s}  \prod_{s=1}^b  \eta_{K, M}^{(L)}\left(\left\lceil\frac{K L u_s'}{N}\right\rceil\right)  X_{u_s'}   \right]  \nonumber \\
\notag&= \sum_{\bm e \in \cN_{N}^{a, b}} \frac{1}{r_n^{|V(H)|}} \E\left[ \prod_{s=1}^a A(G_{n, K, M}^{(L)})(u_s, v_s) \right] \prod_{s=1}^b  \eta_{K, M}^{(L)}\left(\left\lceil\frac{K L u_s'}{N}\right\rceil\right)\\
&= \sum_{\bm e \in \cN_{N}^{a, b}} \frac{1}{r_n^{|V(H)|}}  \prod_{s=1}^a W_{K,M}^{(L)}\left(\frac{K u_s}{N} , \frac{K v_s}{N} \right)  \prod_{s=1}^b  \eta_{K, M}^{(L)}\left(\left\lceil\frac{K L u_s'}{N}\right\rceil\right),
\end{align} 
where $H$ is the graph formed by the union of the edges $(u_1, v_1), (u_2, v_2), \ldots, (u_a, v_a)$  and the vertices $u_1', u_2', \ldots, u_b'$. Note that since $u_s' \in [N]=[\ceil{K r_n}]$, there exists $x_s' \in [0, K]$ such that $u_s'=\ceil{x_s' r_n}$. This implies 
\begin{align}\label{eq:nkml}
\eta_{K, M}^{(L)}\left(\left\lceil\frac{K L u_s'}{N}\right\rceil\right)=\eta_{K, M}^{(L)}\left(\left\lceil\frac{K L \ceil{x_s' r_n}}{\ceil{K r_n}}\right\rceil\right):=\hat{\Delta}_{n,K,M}^{(L)}(x_s'). 
\end{align} 
Similarly, let  $x_s, y_s \in [0, K]$ be such that $u_s=\ceil{x_s r_n}$ and $v_s=\ceil{y_s r_n}$. Then 
\begin{align}\label{eq:adj_Gn_random}
W_{K,M}^{(L)}\left(\frac{K u_s}{N} , \frac{K v_s}{N} \right)=W_{K,M}^{(L)}\left(\frac{K \lceil x_sr_n\rceil }{N} , \frac{K \lceil y_s r_n\rceil}{N}\right):=\hat{W}_{n,K,M}^{(L)}(x_s,y_s). 
\end{align}

Now, let $\{w_1, w_2, \ldots, w_{|V(H)|}\}$ be any labelling of the vertices in $V(H)$ and $z_j$ be such that $w_j=\ceil{z_j r_n}$. Then, as in \eqref{eq:diff_Z_I}, using  
 \eqref{eq:nkml} and \eqref{eq:adj_Gn_random}, for every graph $H$ with at most $a$ edges and at most $b$ isolated vertices and every vector $\bm \eta=(\eta_1, \eta_2, \ldots, \eta_{|V(H)|})$, \eqref{eq:TY_ab_expectation} can be rewritten as, 
\begin{align}\label{eq:TY_LKM_ab}
&\E\left[(\overline T_{n,L,K,M}^+)^a (\overline Y_{n,L, K,M})^b\right] \nonumber \\
&= \sum_{H \in \sG_{a, b}} \sum_{\bm \eta \in [0, b]^{|V(H)|}} c(H, \bm \eta)  \int_{\sB_{K, n}} t(H, \hat{W}_{n, K, M}^{(L)}, \bm z) \prod_{j=1}^{|V(H)|} \hat{\Delta}_{n,K,M}^{(L)}(z_j)^{\eta_j} \mathrm d z_j,
\end{align}
where, as in \eqref{eq:diff_Z_II}, $\sB_{K, n}:=[0, \frac{\ceil{K r_n}}{r_n} ]^{|V(H)|}$, $\bm z=(z_1, z_2, \ldots, z_{|V(H)|})$, $t(H, \cdot, \bm z)$ is as defined in Definition \ref{defn:tHWu}, and $\sG_{a, b}$  is the collection of graphs with at most $a$ edges and at most $b$ isolated vertices, and $[0, b]^{|V(H)|}:=\{0, 1, 2, \ldots, b\}^{|V(H)|}$. 

Now, since the sum in \eqref{eq:TY_LKM_ab} is over  a finite set (not depending on $n$) and  each term in the integrand is bounded (by a function of $H$, $K$, and $M$), the integral over $\sB_{K, n}$ can be replaced by the integral over $\sB_K=[0, K]^{|V(H)|}$, as $n \rightarrow \infty$. Moreover, note that the functions $\hat{\Delta}_{n,K,M}^{(L)}$ and $\hat{W}_{n,K,M}^{(L)}$ converge in $L_1([0,K])$ and $L_1([0,K]^2)$ to $\Delta_{K,M}^{(L)}$ and $W_{K,M}^{(L)}$ respectively, as $n\rightarrow\infty$. Then using a telescoping argument as in the proof of Lemma \ref{lm:WH}, it follows that 
\begin{align*}
&\lim_{n \rightarrow \infty}\E\left[(\overline T_{n,L,K,M}^+)^a (\overline Y_{n,L, K,M})^b\right] \nonumber \\
&= \sum_{H \in \sG_{a, b}} \sum_{\bm \eta = (\eta_1, \eta_2, \ldots, \eta_{|V(H)|}) \in [0, b]^{|V(H)|}} c(H, \bm \eta)  \int_{\sB_K} t(H, W_{K, M}^{(L)}, \bm z) \prod_{j=1}^{|V(H)|} \Delta_{K, M}^{(L)}(z_j)^{\eta_j}  \mathrm d z_j.
\end{align*}
Next, recall that $\Delta_{K,M}^{(L)}$ and $W_{K,M}^{(L)}$  converge in $L_1([0,K])$ and $L_1([0,K]^2)$ to $\Delta_{K,M}$ and $W_{K,M}$ respectively, as $L\rightarrow\infty$, and so we have 
\begin{align}
&\lim_{L \rightarrow \infty} \lim_{n \rightarrow \infty}\E\left[(\overline T_{n,L,K,M}^+)^a (\overline Y_{n,L, K,M})^b\right] \nonumber \\
&= \sum_{H \in \sG_{a, b}} \sum_{\bm \eta \in [0, b]^{|V(H)|}} c(H, \bm \eta)  \int_{\sB_K} t(H, W_{K, M}, \bm z) \prod_{j=1}^{|V(H)|}  \Delta_{K, M}(z_j)^{\eta_j}  \mathrm d z_j  \nonumber \\
&= \sum_{H \in \sG_{a, b}} \sum_{\bm \eta \in [0, b]^{|V(H)|}} c(H, \bm \eta)  \int_{\sB_K} t(H, W_K, \bm z) \prod_{j=1}^{|V(H)|} \left(d_K(z_j)-\int_0^K W_K(z_j, v)\mathrm dv\right)^{\eta_j}  \bm  1\{d_K(z_j) \leq M\}  \mathrm d z_j \tag*{(recall \eqref{eq:WKM} and \eqref{eq:delta_KM})}\nonumber \\
\label{eq:diff_TY_III} &=\lim_{n\to \infty} \E\left[(T_{n,K,M}^+)^a (Y_{n,K,M})^b\right], 
\end{align}
where the last step follows by combining \eqref{eq:diff_Z_II} and Lemma \ref{lm:WH}.

The equality of the limiting joint moments in \eqref{eq:diff_TY_III} and Lemma \ref{lm:t1t2L}, implies, by a diagonalization argument, that for every fixed $K, M \geq 1$ and $t_1, t_2 > 0$, we can find sequences $n_j$ and $L_j$ both increasing to $+\infty$ as $j \rightarrow \infty$, such that
\begin{align}\label{eq:TY_ab}
\lim_{j\to \infty} \E\left[(\overline T_{n_j,L_j,K,M}^+)^a (\overline Y_{n_j,L_j, K,M})^b\right]=\lim_{n\to \infty} \E\left[(T_{n,K,M}^+)^a (Y_{n,K,M})^b\right] =: \mu_{a,b},
\end{align}
for all non-negative integers $a, b$, and 
\begin{align}\label{eq:TY_exp_moment}
\lim_{j\rightarrow \infty}\E \exp&\Big\{-t_1 \overline{T}_{n_j, L_j, K, M}^+-t_2 \overline Y_{n_j,L_j,K,M} \Big\}\nonumber \\
&=\E \exp\left\{\frac{1}{2}\int_0^K \int_0^K \phi_{t_1, K, M}(x,y) \mathrm d N(x)\mathrm dN(y)-t_2 \int_0^K\Delta_{K, M}(x)\mathrm d N(x)\right\}. 
\end{align} 
By Lemma \ref{lm:limit_existence} and  \eqref{eq:TY_ab}, $\mu_{a,b} = \E\left[(T_{K,M}^+)^a(Y_{K,M})^b\right]$, and these mixed moments satisfy the Stieltjes moment condition. Hence, by  \eqref{eq:TY_ab}, $(\overline T_{n_j,L_j,K,M}^+ , \overline Y_{n_j,L_j,K,M}) \xrightarrow{D} (T_{K,M}^+,Y_{K,M})$ as $j \rightarrow \infty$. The result in \eqref{eq:t1t2} then follows from \eqref{eq:TY_exp_moment}. 

For \eqref{eq:t1t2_II} we compute the joint moment generating function of $(T_{n,K, M}^+,Z_{n,K, M})$: 
\begin{align*}
\E & \left[\exp\left\{-t_1 T_{n,K, M}^+ -t_2 Z_{n,K, M}\right\}\Big| (X_v)_{v \in V_{n, K, M}^+} \right] \\ 
& \hspace{0.5in} = \exp\left\{-t_1 T_{n,K, M}^+ \right\} \E\left[\exp\left(-t_2Z_{n,K,M}\Big| (X_v)_{v \in V_{n, K, M}^+}\right)\right] \\
&  \hspace{0.5in}  =  \exp\left\{-t_1 T_{n,K, M}^+ \right\}  \left(1-p_n(1-e^{-t_2})\right)^{\sum_{u \in V_{n,K, M}^+} X_u d_u^\pm} \tag*{(recall \eqref{eq:Z_II}).}
\end{align*}
Hence, using $(T_{n,K,M}^+,Y_{n,K,M}) \dto (T_{K,M}^+,Y_{K,M})$, as $n \rightarrow \infty$, and the dominated convergence theorem, 
\begin{align}\label{eq:cfKM}
\E\left[\exp\left\{-t_1T_{n,K, M}^+ -t_2Z_{n,K, M}\right\} \right]  & = \E\left[\exp\left\{-t_1 T_{n,KM}^+ \right\} \left(1-p_n(1-e^{-t_2})\right)^{\frac{Y_{n,K,M}}{p_n}}\right] \nonumber \\ 
& \rightarrow  \E\left[\exp\left\{-t_1 T_{K, M}^+ \right\} \exp\left\{Y_{K, M}(e^{-t_2}-1)\right\}\right]. 
\end{align} 
Note that, by \eqref{eq:t1t2}, the RHS of \eqref{eq:cfKM} is the moment generating function of $(T_{K, M}^+, Y_{K, M})$ evaluated at the points $-t_1$ and $-(1-e^{t_2})$. This implies,  $(T_{n, K, M}^+, Z_{n, K, M}) \rightarrow (T_{K, M}^+, Z_{K, M})$ in distribution and in moments (by uniform integrability, using Lemma \ref{lm:TZ}), where the joint moment generating function is given by  \eqref{eq:t1t2_II}. 
\end{proof}

\subsection{Completing the Proof of \eqref{eq:Q1Q2Q3} in Theorem \ref{thm:poisson_quadratic}} 
\label{sec:pfquadratic6}

We now combine the results from the previous sections and complete the proof of \eqref{eq:Q1Q2Q3}. 

\begin{lem}\label{lm:TY_KM} Fix $M > 0$ large enough. Under the assumptions of Theorem \ref{thm:poisson_quadratic}, $(T_{n, K, M}^+, Z_{n,K,M})$ converges to $(T_M^+, Z_M)$ under the double limit as $n \rightarrow \infty$ followed by $K\rightarrow \infty$, in distribution and in moments, where the limiting moment generating function is given by 
\begin{align}\label{eq:t1t2M}
\E \exp&\Big\{-t_1 T_{M}^+-t_2 Z_{M} \Big\}\nonumber \\
&=\E \exp\left\{ \frac{1}{2}\int_0^\infty \int_0^\infty \phi_{t_1, M}(x,y) \mathrm d N(x)\mathrm dN(y)-(1-e^{-t_2}) \int_0^\infty \Delta_{M}(x)\mathrm d N(x)\right\},
\end{align} 
for $t_1, t_2 > 0$, where 
\begin{itemize}
\item[--] $\phi_{t_1, M}(x, y):=\log(1-W^{(M)}(x, y)+W^{(M)}(x, y)e^{-t_1})$, where $W^{(M)}(x, y)=W(x, y) \bm  1\{d(x) \leq M, d(y) \leq M\}$, with $W:[0, \infty)^2 \rightarrow [0, 1]$ and $d: [0, \infty) \rightarrow [0, \infty)$ as in the statement of Theorem \ref{thm:poisson_quadratic}, and  
\item[--] $\Delta_{M}(x):=(d(x)-\int_0^\infty W(x, y) \mathrm dy) \bm 1 \{d(x) \leq M\}$.
\end{itemize} 
\end{lem}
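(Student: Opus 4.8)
The goal is to take the limiting statement from Lemma \ref{lm:t1t2} (which gives, for each fixed $K$ and $M$, convergence of $(T_{n,K,M}^+, Z_{n,K,M})$ to $(T_{K,M}^+, Z_{K,M})$ with moment generating function \eqref{eq:t1t2_II}) and let $K \to \infty$. First I would argue that the mixed moments $\mu_{a,b,K,M} := \E[(T_{K,M}^+)^a (Z_{K,M})^b]$ converge as $K \to \infty$. For this, observe that increasing $K$ only enlarges $V_{G_n,K}^+$, and the assumptions (b)--(c) of Theorem \ref{thm:poisson_quadratic} force $W_K = W|_{[0,K]^2}$ and $d_K = d|_{[0,K]}$ to be the restrictions of globally defined functions $W \in \cW$ and $d \in \cF$ (with $\phi_{n,K}$ the identity map, as noted in the footnote to Lemma \ref{lm:moment_limit}). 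Hence in the moment formula \eqref{eq:diff_Z_II} for $\E[(T_{n,K,M}^+)^a(Y_{n,K,M})^b]$, the integrals are over $[0,K]^{|V(H)|}$ of non-negative integrands that are monotone increasing in $K$; since $\int_0^\infty\!\!\int_0^\infty W(x,y)\,\mathrm dx\,\mathrm dy < \infty$ and $d \in L_1([0,\infty))$, the monotone/dominated convergence theorem gives that $\lim_{K\to\infty}\mu_{a,b,K,M}$ exists, equal to the same expression with $[0,K]$ replaced by $[0,\infty)$ and $W_K, d_K$ replaced by $W^{(M)}, \Delta_M$. The uniform-in-$K$ bounds from Lemma \ref{lm:limit_existence} (namely $\mu_{a,0,K,M} \le (2CKa)^{2a}$, $\mu_{0,b,K,M}\le(MCKb)^b$) do depend on $K$, so I cannot directly reuse them; instead I would bound $T_{n,K,M}^+$ by $\binom{R}{2}$ with $R \sim \dBin$ of parameter $|V_{n,K,M}^+|\,p_n$, and $Z_{n,K,M}$ by $\dBin$-type quantities, but now controlled by $\E T_n = \Theta(1)$ rather than by $K$ — the point being that although $|V_{n,K,M}^+| \le \lceil Kr_n\rceil$ grows with $K$, the \emph{number of present vertices} $\sum_{u \in V_{n,K,M}^+} X_u$ together with the degree constraints keeps the relevant moments bounded uniformly in $K$ by \eqref{eq:ETn}. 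This uniform bound gives a Stieltjes moment condition for the limit, hence identifies a genuine random variable $(T_M^+, Z_M)$ with those moments.

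Next I would pass from convergence of moments to convergence of the moment generating function. Lemma \ref{lm:t1t2} gives, for each $K$,
\begin{align*}
\E \exp\{-t_1 T_{K,M}^+ - t_2 Z_{K,M}\} = \E \exp\left\{\tfrac{1}{2}\int_0^K\!\!\int_0^K \phi_{t_1,K,M}(x,y)\,\mathrm dN(x)\mathrm dN(y) - (1-e^{-t_2})\int_0^K \Delta_{K,M}(x)\,\mathrm dN(x)\right\}.
\end{align*}
Since $\phi_{t_1,K,M} = \phi_{t_1,M}|_{[0,K]^2}$ is a bounded function vanishing off $[0,K]^2$ and $\int\!\!\int |\phi_{t_1,M}| \le t_1\int\!\!\int W^{(M)} < \infty$ (using $|\log(1-w+we^{-t_1})| \le t_1 w$ for $w \in [0,1]$), and likewise $\Delta_{K,M} \to \Delta_M$ in $L_1([0,\infty))$, Proposition \ref{ppn:limit} on $L_1$-continuity of Poisson stochastic integrals gives that both stochastic integrals converge in probability as $K \to \infty$. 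The integrands inside the outer expectation are uniformly bounded (the bivariate integrand is bounded above by $0$ and the univariate one, for $t_2 \ge 0$, is $\le 0$), so dominated convergence yields convergence of the right-hand side to $\E\exp\{\tfrac12\int_0^\infty\!\!\int_0^\infty \phi_{t_1,M}\,\mathrm dN\,\mathrm dN - (1-e^{-t_2})\int_0^\infty \Delta_M\,\mathrm dN\}$, which is the claimed formula \eqref{eq:t1t2M}. Convergence of this Laplace-transform-type functional together with the Stieltjes moment bound then upgrades to convergence in distribution and in moments of $(T_{K,M}^+, Z_{K,M})$ to $(T_M^+, Z_M)$ as $K \to \infty$.

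Finally I must interchange the order: the statement asks for the double limit $n\to\infty$ then $K\to\infty$ of $(T_{n,K,M}^+, Z_{n,K,M})$ directly. This follows by a standard diagonalization: for each $K$, Lemma \ref{lm:t1t2} gives convergence in $n$ to $(T_{K,M}^+, Z_{K,M})$ in every mixed moment; and we have just shown convergence in $K$ of $(T_{K,M}^+, Z_{K,M})$ to $(T_M^+, Z_M)$ in every mixed moment. Since moment convergence here is equivalent to distributional convergence by the Stieltjes criterion (the uniform moment bounds survive the double limit), the iterated limit of the moments equals the moments of $(T_M^+, Z_M)$, giving $(T_{n,K,M}^+, Z_{n,K,M}) \to (T_M^+, Z_M)$ in distribution and in moments under the stated double limit, with m.g.f.\ \eqref{eq:t1t2M}.

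\textbf{Main obstacle.} The delicate point is the uniform-in-$K$ control of moments needed for the Stieltjes condition to survive the limit $K \to \infty$ — the naive bounds from Lemma \ref{lm:limit_existence} blow up with $K$, so one must instead exploit that the edge count of the ``high-degree'' part of $G_n$ is itself $O(1/p_n^2)$ and that the restrictions $W_K, d_K$ are tails of fixed $L_1$ functions, to get bounds independent of $K$. A secondary technical care is justifying the dominated-convergence interchange for the Poisson stochastic integrals as $K\to\infty$, which rests on the $L_1$-integrability of $\phi_{t_1,M}$ and $\Delta_M$ over $[0,\infty)$ — both guaranteed by the hypotheses $\int\!\!\int W < \infty$ and $d \in L_1$ via Proposition \ref{ppn:limit}.
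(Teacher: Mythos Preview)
Your overall strategy is the paper's: invoke Lemma \ref{lm:t1t2} for each fixed $K$, then send $K\to\infty$ by showing $\phi_{t_1,K,M}\to\phi_{t_1,M}$ and $\Delta_{K,M}\to\Delta_M$ in $L_1([0,\infty)^2)$ and $L_1([0,\infty))$, applying Proposition \ref{ppn:limit} to the stochastic integrals, and using dominated convergence on the outer expectation. That part is correct and matches the paper's argument exactly.

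Where you diverge is in the handling of the uniform-in-$K$ moment bound, which you flag as the ``main obstacle'' and propose to resolve by a new argument exploiting $\E T_n=\Theta(1)$. This is unnecessary: the bound $\limsup_{K\to\infty}\limsup_{n\to\infty}\E[(T_{n,K,M}^+)^a (Z_{n,K,M})^b]\lesssim_{a,b,M}1$ is exactly the ``Moreover'' clause of Lemma \ref{lm:TZ}, already established. The paper simply cites this, and moment convergence then follows from the distributional convergence (via the m.g.f.) plus uniform integrability. You do not need a separate Stieltjes argument at the $K\to\infty$ stage, nor the $K$-dependent bounds from Lemma \ref{lm:limit_existence}.

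Relatedly, your proposed direct route to moment convergence --- arguing that the integrands in \eqref{eq:diff_Z_II} are ``monotone increasing in $K$'' so monotone convergence applies --- is shaky. On $[0,K]$ one has $\zeta_K(x)=d(x)-\int_0^K W(x,y)\,\mathrm dy$, which is \emph{decreasing} in $K$, while the domain $[0,K]^{|V(H)|}$ is growing; the integrand is therefore not monotone, and a dominating function on $[0,\infty)^{|V(H)|}$ is not immediate either (isolated vertices of $H$ contribute factors $\zeta_K(u_j)^{\eta_j}\mathbf{1}\{d(u_j)\le M\}$ that are bounded but not integrable on their own). This step is not needed, and the paper avoids it entirely by going through the m.g.f.\ first and invoking Lemma \ref{lm:TZ} for moments.
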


\begin{proof} Let $W:(0, \infty)^2 \rightarrow [0, 1]$ and $d: [0, \infty) \rightarrow [0, \infty)$ be as in the statement of Theorem \ref{thm:poisson_quadratic}. Define $W_K(x, y):=W(x, y) \bm 1\{x, y \in [0, K]\}$ and $d_K(x):= d(x) \bm 1\{x \in [0, K]\}$. Then the conditions \eqref{eq:WK_condition_I} and \eqref{eq:dK_condition_II} imply that the the sequence of functions $\{W_K\}_{K \geq 1}$ and $\{d_K\}_{K \geq 1}$, satisfy the conditions \eqref{eq:W_phi_K} and \eqref{eq:L1_M}, where $\phi_{n, K}$ the identity map from $[0, K]$ to $[0, K]$, for all $n \geq 1$. Therefore, by Lemma \ref{lm:t1t2}, $(T_{n,K,M}^+,Z_{n,K,M})$ converge in distribution and in moments, as $n\rightarrow\infty$, to $(T_{K,M}^+,Z_{K,M})$. Thus, to prove the lemma it suffices to compute the limiting distribution of $(T^+_{K,M},Z_{K,M})$ as $K\rightarrow\infty$. 

To this effect, using Observation \ref{obs:Wphi_integral} below,  gives, for any $t_1 > 0$
\begin{eqnarray}\label{eq:DeltaKM_integral}
\int_0^K \int_0^K \phi_{t_1, K, M}(x,y) \mathrm dx \mathrm dy& \rightarrow& \int_0^\infty \int_0^\infty \phi_{t_1,M}(x,y)\mathrm dx \mathrm dy \nonumber \\
\int_0^K \Delta_{K, M}(x) \mathrm dx&\rightarrow& \int_0^\infty \Delta_M(x)\mathrm dx, 
\end{eqnarray} 
as $K \rightarrow \infty$. Also, noting that $\phi_{t_1, M}(x,y) \leq \phi_{t_1, K, M}(x,y)$ for all $x, y$, it follows that $\lim_{K \rightarrow \infty} ||\phi_{t_1, K, M}-\phi_{t_1, M}||_{L_1([0, \infty)^2)} = 0$. Similarly, using \eqref{eq:DeltaKM_integral}, it can be shown that $\lim_{K \rightarrow \infty} ||\Delta_{K, M}-\Delta_{M}||_{L_1([0, \infty)^2)}=0$. This implies (using the convergence of stochastic integrals in Proposition \ref{ppn:limit}), as $K  \rightarrow \infty$, that 
\begin{align*}
\int_0^K \int_0^K \phi_{t_1, K, M}(x,y) \mathrm d N(x)\mathrm dN(y) \stackrel{L_1} \rightarrow& \int_0^\infty \int_0^\infty \phi_{t_1, M}(x,y) \mathrm d N(x)\mathrm dN(y),\\
\int_0^K \Delta_{K, M}(x) \mathrm d N(x)  \stackrel{L_1} \rightarrow &\int_0^\infty \Delta_{M}(x)  \mathrm d N(x),
\end{align*}
Therefore, taking limit as $K \rightarrow \infty$ in \eqref{eq:t1t2_II} we see that the moment generating function converge to the RHS of \eqref{eq:t1t2M} (using the convergence of the stochastic integrals above and the dominated convergence theorem). This shows, $(T_{K, M}^+, Z_{K,M}) \dto (T_M^+, Z_M)$, as $K\rightarrow \infty$, with the joint moment generating function of $(T_M^+, Z_M)$ given by \eqref{eq:t1t2M}. To see that this convergence is also in moments, recall from Lemma \ref{lm:TZ} that $$\limsup_{K \rightarrow \infty}  \limsup_{n \rightarrow \infty} \E \left[(T_{n,K,M}^+)^a(Z_{n,K,M})^b \right] \lesssim_{a, b, M} 1.$$ Therefore, by uniform integrability, the convergence in moments follows. 
\end{proof}

Combining the results above we can now derive the limiting distribution of $(T_{n,K,M}^+,  Z_{n,K,M}, W_{n,K}^-)$, as $n \rightarrow \infty$ followed by $K \rightarrow \infty$. 

\begin{lem}\label{lm:limit_KM} Let  $(T_M^+, Z_M)$ be random variables with joint moment generating function as in \eqref{eq:t1t2M}. Then, under the assumptions of Theorem \ref{thm:poisson_quadratic},  
$$(T_{n,K,M}^+,  Z_{n,K,M}, W_{n,K}^-) \dto (T_{M}^+, Z_{M}, W),$$ in distribution and in all (mixed) moments, as $n \rightarrow \infty$ followed by $K \rightarrow \infty$, where $W \sim \dPois(\lambda_0)$ and $W$ is independent of $(T_{M}^+, Z_{M})$. 
\end{lem}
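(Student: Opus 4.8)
The plan is to reduce the lemma to facts already in hand: the joint weak convergence of $(T_{n,K,M}^+, Z_{n,K,M})$ from Lemma~\ref{lm:TY_KM}, the Poisson limit of the binomial variable $W_{n,K}^-$, and the observation that $W_{n,K}^-$ is built only from the auxiliary collection $\{R_{uv}\}_{(u,v)\in E(G_{n,K}^-)}$, which we take independent of $\{X_v\}_{v\in V(G_n)}$ and $\{J_{uv}\}_{(u,v)\in E(G_{n,K}^\pm)}$ (exactly as $\{J_{uv}\}$ was declared independent of $\{X_v\}$); hence for every fixed $n$ and $K$ the variable $W_{n,K}^-$ is independent of the pair $(T_{n,K,M}^+, Z_{n,K,M})$. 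With this independence in place, all mixed moments factor, $\E[(T_{n,K,M}^+)^a (Z_{n,K,M})^b (W_{n,K}^-)^c]=\E[(T_{n,K,M}^+)^a (Z_{n,K,M})^b]\,\E[(W_{n,K}^-)^c]$, and it suffices to treat the two factors separately under the iterated limit $n\to\infty$ followed by $K\to\infty$.

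For the first factor, Lemma~\ref{lm:t1t2} gives, for each fixed $K$, convergence in distribution and in all moments of $(T_{n,K,M}^+, Z_{n,K,M})$ to $(T_{K,M}^+, Z_{K,M})$ as $n\to\infty$, and Lemma~\ref{lm:TY_KM} gives $(T_{K,M}^+, Z_{K,M})\dto (T_M^+, Z_M)$ (with convergence of moments) as $K\to\infty$; in particular $\lim_{K\to\infty}\lim_{n\to\infty}\E[(T_{n,K,M}^+)^a (Z_{n,K,M})^b]=\E[(T_M^+)^a (Z_M)^b]$. For the second factor, note $W_{n,K}^-\sim\dBin(|E(G_{n,K}^-)|, p_n^2)$ and $|E(G_{n,K}^-)|\,p_n^2=\tfrac12\int_K^\infty\int_K^\infty W_{G_n}(x,y)\,\mathrm dx\,\mathrm dy + \varepsilon_{n,K}$, where $\varepsilon_{n,K}$ accounts for the single boundary row/column of the adjacency matrix indexed by $\lceil Kr_n\rceil$ and satisfies $\varepsilon_{n,K}\to 0$ as $n\to\infty$ for each fixed $K$ (using the bound $d_{\lceil Kr_n\rceil}\lesssim r_n/K$ from the proof of Observation~\ref{obs:label}). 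Hence, by assumption~(a) of Theorem~\ref{thm:poisson_quadratic}, $\lambda_0(K):=\lim_{n\to\infty}|E(G_{n,K}^-)|\,p_n^2$ exists for each $K$ and $\lambda_0(K)\to\lambda_0$ as $K\to\infty$. Since $p_n^2\to 0$, the binomial--Poisson approximation gives $W_{n,K}^-\dto\dPois(\lambda_0(K))$ as $n\to\infty$ with convergence of all moments (the $c$-th moment of $\dBin(N,q)$ is a polynomial in $Nq$ and $q$ via the Stirling-number expansion in falling factorials, converging to the $c$-th moment of $\dPois(\mu)$ as $q\to 0$, $Nq\to\mu$); letting $K\to\infty$ and using continuity of these polynomials, $\lim_{K\to\infty}\lim_{n\to\infty}\E[(W_{n,K}^-)^c]=\E[\dPois(\lambda_0)^c]=\E[W^c]$.

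Combining the two factors shows that every mixed moment of $(T_{n,K,M}^+, Z_{n,K,M}, W_{n,K}^-)$ converges, under the iterated limit, to the corresponding mixed moment of $(T_M^+, Z_M, W)$ with $W\sim\dPois(\lambda_0)$ independent of $(T_M^+, Z_M)$; this is the asserted convergence in moments. For convergence in distribution I would argue directly rather than invoke a moment-determinacy criterion: for fixed $K$, independence of $W_{n,K}^-$ from $(T_{n,K,M}^+, Z_{n,K,M})$ together with the two marginal weak limits forces $(T_{n,K,M}^+, Z_{n,K,M}, W_{n,K}^-)\dto (T_{K,M}^+, Z_{K,M}, W_K)$ with $W_K\sim\dPois(\lambda_0(K))$ independent of $(T_{K,M}^+, Z_{K,M})$ (the characteristic function of the triple factors across the two independent blocks, and each block converges); sending $K\to\infty$, using $(T_{K,M}^+, Z_{K,M})\dto(T_M^+, Z_M)$, $W_K\dto W$ and the same factorization, yields the claim with $W$ independent of $(T_M^+, Z_M)$.

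The argument is essentially a bookkeeping exercise, and the one place I expect to need care -- and would flag as the main (mild) obstacle -- is precisely that bookkeeping for the iterated double limit: confirming that the inner limit in assumption~(a) exists for each fixed $K$ so that $\lambda_0(K)$ is well defined (or else passing to a subsequence), that the boundary error $\varepsilon_{n,K}$ is controlled for each fixed $K$, and that the independence of the two coordinate blocks genuinely survives taking first $n\to\infty$ and then $K\to\infty$.
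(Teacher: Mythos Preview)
Your proposal is correct and follows essentially the same route as the paper: exploit the independence of $W_{n,K}^-$ from $(T_{n,K,M}^+,Z_{n,K,M})$, invoke Lemma~\ref{lm:TY_KM} for the first block and the binomial--Poisson limit (via condition~(a)) for the second, then combine. The only minor difference is packaging: the paper bounds the mixed moments by Cauchy--Schwarz together with the bounds from Lemmas~\ref{lm1} and~\ref{lm:TZ} and appeals to uniform integrability, whereas you factor the moments directly and treat each factor separately; your flagged concern about the inner limit $\lambda_0(K)$ existing for each fixed $K$ is a real bookkeeping point (the paper's double-limit convention allows it not to), but it is harmless here since the Poisson moments are continuous in the mean and one can work with $\limsup$/$\liminf$ or subsequences as you suggest.
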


\begin{proof} By \eqref{eq:t1t2_II} and \eqref{eq:t1t2M}, as $n \rightarrow \infty$, followed by $K \rightarrow \infty$, for $t_1,t_2\ge 0$, 
\begin{align}\label{eq:tzcf}
\E & \left[\exp\left\{-t_1T_{n,K, M}^+ -t_2Z_{n,K, M}\right\} \right] \nonumber \\   
&  \rightarrow \E \exp\left\{ \frac{1}{2}\int_0^\infty \int_0^\infty \phi_{t_1, M}(x,y) \mathrm d N(x)\mathrm dN(y)-(1-e^{-t_2}) \int_0^\infty \Delta_{M}(x)\mathrm d N(x)\right\}, 
\end{align} 
This shows $(T_{n,K, M}^+, Z_{n,K, M}) \xrightarrow{D} (T_{M}^+,Z_{M})$, as $n \rightarrow \infty$ followed by $K \rightarrow \infty$, with joint moment generating function as above. 

Now,  recall the definition of $W_{n, K}^-$ from \eqref{eq:Wn-}. By definition, $W_{n, K}^-$ is independent of $(T_{n,K,M}^+,$ $Z_{n,K,M})$ and $W_{n, K}^- \dto W \sim \dPois(\lambda_0)$ (by condition (a) in Theorem \ref{thm:poisson_quadratic}). Therefore, $$(T_{n,K,M}^+,  Z_{n,K,M}, W_{n, K}^-) \dto (T_{M}^+, Z_{M}, W),$$ as required. 

Finally, by Lemma \ref{lm:TZ} $\limsup_{K \rightarrow \infty}  \limsup_{n \rightarrow \infty} \E \left[(T_{n,K,M}^+)^a Z_{n,K,M}^b\right] \lesssim_{a, b, M} 1$, and by Lemma \ref{lm1} $\limsup_{K \rightarrow \infty} \limsup_{n \rightarrow \infty}\E \left[({W_{n,K}^-})^c\right]  \lesssim_{c, M} 1$, for all positive integers $a, b, c$. Therefore, by the Cauchy-Schwarz inequality,
$$\E\left[({T_{n,K, M}^+})^a Z_{n,K, M}^b ({W_{n,K}^-})^c \right] \leq \left(\E\left[({T_{n,K, M}^+})^{2a} Z_{n,K, M}^{2b}\right] \E \left[ ({W_{n,K}^-})^{2c} \right] \right)^{\frac{1}{2}} \lesssim_{a, b, c, M} 1.$$ Then by uniformly integrability the convergence of the mixed moments follows. 
\end{proof}

The above lemma implies that $T_{n,K,M}^++Z_{n,K,M}+W_{n,K}^- \rightarrow T_{M}^+ +Z_{M} + W$, in distribution and in all moments. Then by Proposition \ref{ppn:abc} (recall \eqref{eq:Tn123}), 
\begin{align}\label{eq:TZW}
T_{n,M}=T_{n,K,M}^+ + T_{n,K,M}^\pm +T_{n,K,M}^- \rightarrow T_{M}^++ Z_{M} + W,
\end{align}
in all moments. Convergence in distribution follows by verifying the Stieltjes moment condition for  $T_{M}^+ + Z_{M} + W$, as follows:

\begin{lem}\label{lm:limit_M} Fix $M \geq 1$. Let  $(T_M^+, Z_M)$ be random variables with joint moment generating function as in \eqref{eq:t1t2M}. Then, under the assumptions of Theorem \ref{thm:poisson_quadratic},  
$$T_{n,M} \rightarrow T_{M}^+ + Z_{M} + W,$$
in distribution and in all (mixed) moments, as $n \rightarrow \infty$, where $W \sim \dPois(\lambda_0)$ and $W$ is independent of $(T_{M}^+, Z_{M})$. 
\end{lem}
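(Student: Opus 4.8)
The plan is to first make precise the convergence of moments (which was noted informally just above the statement), and then to upgrade it to convergence in distribution by verifying the Stieltjes moment condition for $T_M^+ + Z_M + W$.

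For the moment convergence, fix a positive integer $m$ and an auxiliary integer $K \ge 1$. Expanding $(T_{n,M})^m$ by the multinomial theorem using the decomposition $T_{n,M}=T_{n,K,M}^+ + T_{n,K,M}^\pm + T_{n,K,M}^-$ from \eqref{eq:Tn123}, applying Proposition \ref{ppn:abc} to each mixed moment $\E[(T_{n,K,M}^+)^a (T_{n,K,M}^\pm)^b (T_{n,K,M}^-)^c]$, and then Lemma \ref{lm:limit_KM} together with the independence of $W$ and $(T_M^+,Z_M)$, one obtains
$$\lim_{K\to\infty}\limsup_{n\to\infty}\Big|\E[(T_{n,M})^m]-\E\big[(T_M^+ + Z_M + W)^m\big]\Big|=0 .$$
Since $T_{n,M}$ does not involve $K$, the quantity inside the absolute value does not depend on $K$, so this forces $\lim_{n\to\infty}\E[(T_{n,M})^m]=\E[(T_M^+ + Z_M + W)^m]=:\nu_m$ for every $m$; in particular each $\nu_m$ is finite. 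This already gives convergence of $T_{n,M}$ to $T_M^+ + Z_M + W$ in all moments.

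To pass to convergence in distribution it suffices, since $T_M^+ + Z_M + W$ is supported on $\Z_{+}\cup\{0\}$, to check the Stieltjes condition $\sum_{m\ge 1}\nu_m^{-1/(2m)}=\infty$; then $\{\nu_m\}$ determines the law of $T_M^+ + Z_M + W$ uniquely and the method of moments applies. For this we would establish the quantitative bound $\E[(T_{n,M})^m]\le (Cm^2)^m$ for all large $n$ and all $m\ge 1$, with $C$ depending on $M$ and the sequence $\{G_n\}$ but not on $n$ or $m$; letting $n\to\infty$ then yields $\nu_m\le(Cm^2)^m$, hence $\nu_m^{1/(2m)}\le\sqrt{C}\,m$ and $\sum_m\nu_m^{-1/(2m)}=\infty$. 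The bound on $\E[(T_{n,M})^m]$ comes from the same graph-counting scheme as in Lemma \ref{lm1}: writing $(T_{n,M})^m$ as a sum over $m$-tuples of edges of $G_n$ with all endpoints of degree at most $Mr_n$ and grouping the tuples by the isomorphism type $H$ of the graph they span gives
$$\E[(T_{n,M})^m]\le\sum_{H\in\cH_m}m^m\,N_M(H,G_n)\,p_n^{|V(H)|},$$
where $\cH_m$ is the finite set of isomorphism types of graphs with at most $m$ edges and no isolated vertex, $N_M(H,G_n)$ counts the copies of $H$ in $G_n$ all of whose vertices have degree $\le Mr_n$, and the factor $m^m$ bounds the number of surjective maps $[m]\to E(H)$. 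Choosing a starting edge of $G_n$ in $\le 2|E(G_n)|$ ways and growing each connected component of $H$ along a spanning tree using the degree bound, together with $|E(G_n)|p_n^2=\Theta(1)$ from \eqref{eq:ETn}, yields $N_M(H,G_n)\,p_n^{|V(H)|}\le C_0^m$; and since a graph with $\le m$ edges has $\le 2m$ vertices, one has the estimate $|\cH_m|\le (C_1 m)^m$ for a universal constant $C_1$. Multiplying the three factors gives $\E[(T_{n,M})^m]\le (Cm^2)^m$.

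The main obstacle is this last moment bound, and specifically obtaining the correct growth rate in $m$: the estimates already in Lemmas \ref{lm1} and \ref{lm:TZ} only assert finiteness of the relevant moments, and the constants there (obtained through the crude count $|\cH_a|\le 2^{\binom{2a}{2}}$) grow like $2^{\Theta(m^2)}$, which is far too large for the Stieltjes condition. What rescues the argument is that restricting to graphs with at most $m$ edges—rather than all graphs on at most $2m$ vertices—leaves only $(C_1 m)^m$ isomorphism types, so that $\nu_m$ grows only like $(Cm^2)^m$ and $\nu_m^{1/(2m)}$ only like $m$, making the series $\sum_m \nu_m^{-1/(2m)}$ diverge. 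Once the Stieltjes condition is established, the convergence in distribution and in all moments of $T_{n,M}$ to $T_M^+ + Z_M + W$ follows, with $W\sim\dPois(\lambda_0)$ independent of $(T_M^+, Z_M)$ by Lemma \ref{lm:limit_KM}, which completes the proof.
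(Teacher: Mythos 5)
Your proof plan is correct and follows essentially the same two steps as the paper: moment convergence obtained from Proposition \ref{ppn:abc} and Lemma \ref{lm:limit_KM} (this is exactly how the paper arrives at \eqref{eq:TZW}), followed by verification of the Stieltjes condition via the graph-counting bound $\E T_{n,M}^a \lesssim_M (a^2)^a$, using the spanning-tree count $N(H,G_{n,M}) \lesssim |E(G_n)|^{\nu(H)} (M r_n)^{|V(H)|-2\nu(H)}$. Your observation that the sharper count $|\cH_a| \le (Ca)^a$ of isomorphism types (rather than the crude $2^{\Theta(a^2)}$) is what makes the series $\sum_a \mu_a^{-1/(2a)}$ diverge precisely matches the paper, which invokes \cite{graphs_number} for this estimate.
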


\begin{proof} The convergence in moments follows from \eqref{eq:TZW}. To establish convergence in distribution we need to verify the  Stieltjes moment condition. To this end, 
let $G_{n,M}$ be the graph obtained from $G_n$ by removing all vertices with degree greater than $M r_n$ along with all the edges adjacent on them. Then observe that, for $a \geq 1$,  
\begin{align}
\E T_{n,M}^a =  \sum_{(u_1,v_1), (u_2,v_2), \ldots, (u_a,v_a) \in E(G_{n,M})} \frac{1}{r_n^{|V(H)|}} \leq  a^a \sum_{H \in \cH_a}  \frac{N(H, G_{n,M})}{r_n^{|V(H)|}} ,
\end{align}
where $H$ is the graph formed by the union of the edges $(u_1,v_1), (u_2,v_2), \ldots, (u_a,v_a)$, 
and $\cH_a$ the collection of all non-isomorphic graphs with at most $a$ edges and no isolated vertices.

Now, using $N(H, G_{n,M}) \leq |E(G_n)|^{\nu(H)} (M r_n)^{|V(H)|-2 \nu(H)}$, where $\nu(H)$ is the number of connected components of $H$, and \eqref{eq:ETn}, it follows that there exists some constant $C_1>0$ such that, for $n$ large enough, $\frac{N(H, G_{n,M})}{r_n^{|V(H)|}} \leq C_1^a M^{|V(H)|-2 \nu(H) } \leq C_1^a M^{2a}$, since $|V(H)| \leq 2a$ and $\nu(H) \leq a$, for $H \in \cH_a$. Finally, using $|\cH_a| \leq  C^a a^{a+1} \leq (2C)^a a^a$, for some constant $C>0$ \cite[Theorem 5]{graphs_number}, we get 
\begin{align}\label{eq:moments_TM}
\mu_a:=\lim_{n \rightarrow \infty}\E T_{n,M}^a \lesssim C_1^a (2C)^{a} M^{2a} a^{2a}.
\end{align} 
This shows that 
$$\sum_{a=1}^\infty \frac{1}{\mu_a^{\frac{1}{2a}}} \gtrsim_M \sum_{a=1}^\infty \frac{1}{a} =\infty,$$ which verifies the Stieltjes moment condition and completes the proof. 
\end{proof}

By monotonicity, as $M \rightarrow \infty$, there exist random variables $(T^+, Z)$ such that $(T_{M}^+, Z_{M}) \dto (T^+, Z)$, with joint moment generating function (which is obtained by taking limit as $M \rightarrow \infty$ in \eqref{eq:t1t2M} and using Proposition \ref{ppn:limit}), 
\begin{align}\label{eq:t1t2TY}
\E \exp&\Big\{-t_1 T^+ -t_2 Z \big\}\nonumber \\
&=\E \exp\left\{-\frac{1}{2}\int_0^\infty \int_0^\infty \phi_{W, t_1}(x, y) \mathrm d N(x)\mathrm dN(y)-(1-e^{-t_2}) \int_0^\infty \Delta(x)\mathrm d N(x)\right\},
\end{align}
where  $\phi_{W, t_1}(\cdot, \cdot)$ and $\Delta(\cdot)$ are as defined in the statement of Theorem \ref{thm:poisson_quadratic}. (Note that $$\int_0^\infty \int_0^\infty \phi_{W, t_1}(x, y) \mathrm d N(x)\mathrm dN(y) < \infty \quad \text{and} \quad \int_0^\infty \Delta(x)\mathrm d N(x) < \infty$$ almost surely, by Observation \ref{obs:Wphi_integral} and finiteness of stochastic integrals for $L_1$ integrable functions.) Thus, $T_M:=T_{M}^+ + Z_{M} + W$ converges in distribution to  $T:=T^+ + Z+W$, as $M \rightarrow \infty$, where $W \stackrel{D}= \dPois(\lambda_0)$ is independent  of $(T^+, Z)$. Therefore, using $T_n=T_{n, M}+o_P(1)$, where the $o_P(1)$-term goes to zero as $n \rightarrow \infty$ followed by $M \rightarrow \infty$ (recall Lemma \ref{Tnapprox}), and Lemma \ref{lm:limit_M}, it follows that $T_n \dto T^+ + Z+W$, where $W \sim \dPois(\lambda_0)$, $W$ is independent of $(T^+, Z)$, and the joint moment generating function of  $(T^+, Z)$ is given by \eqref{eq:t1t2TY}. This completes the proof of \eqref{eq:Q1Q2}.  \hfill $\Box$ \\

The finiteness of the integrals of $W$ and $d$, required in the proof above, is established below: 

\begin{obs}\label{obs:Wphi_integral} With $W(\cdot, \cdot), d(\cdot), \phi_{t_1}$ as in the statement of Theorem \ref{thm:poisson_quadratic}, the following hold:
\begin{enumerate}[(a)]
\item $\int_0^\infty \int_0^\infty W(x, y)  \mathrm d x \mathrm dy  <\infty$,

\item $\int_0^\infty \int_0^\infty |\phi_{W, t_1}(x, y)| \mathrm d x \mathrm dy<\infty$,

\item $\int_0^\infty d(x)  \mathrm d x <\infty.$ 

\end{enumerate} 
\end{obs}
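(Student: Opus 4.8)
The plan is to prove (a) directly, deduce (b) from (a) via an elementary pointwise estimate on $\phi_{W,t_1}$, and observe that (c) is essentially built into the hypotheses of Theorem~\ref{thm:poisson_quadratic}. For (a): by \eqref{eq:WGn}, and since $A(G_n)$ is symmetric with zero diagonal, a direct computation gives $\int_0^\infty\int_0^\infty W_{G_n}(x,y)\,\mathrm dx\,\mathrm dy = r_n^{-2}\sum_{1\le u,v\le n}a_{uv}(G_n) = 2|E(G_n)|r_n^{-2} = 2\,\E[T_n]$, which by \eqref{eq:ETn} is bounded above by a finite constant $C$ for all $n$ large. On the other hand, taking $f\equiv g\equiv 1$ in the definition \eqref{eq:Wconvergence}, assumption \eqref{eq:WK_condition_I} yields $\int_{[0,K]^2}W_{G_n}\to\int_{[0,K]^2}W$ as $n\to\infty$, for every $K$ large enough; since $\int_{[0,K]^2}W_{G_n}\le\int_{[0,\infty)^2}W_{G_n}\le C$, this forces $\int_{[0,K]^2}W\le C$ for all such $K$. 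As $W\ge 0$, the map $K\mapsto\int_{[0,K]^2}W$ is nondecreasing, so monotone convergence gives $\int_{[0,\infty)^2}W=\lim_{K\to\infty}\int_{[0,K]^2}W\le C<\infty$.

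For (b), fix $t_1\ge 0$ and put $c:=1-e^{-t_1}\in[0,1)$. For $x,y\in[0,\infty)$ set $v:=W(x,y)\,c\in[0,c]$, so that $\phi_{W,t_1}(x,y)=\log(1-v)\le 0$. The function $g(s):=-\log(1-s)$ is convex on $[0,1)$ with $g(0)=0$, hence $g(\theta c)\le\theta\,g(c)$ for every $\theta\in[0,1]$; applying this with $\theta=W(x,y)$ and noting $g(c)=t_1$ gives $|\phi_{W,t_1}(x,y)|=g(v)\le t_1\,W(x,y)$. Integrating over $[0,\infty)^2$ and using part (a),
\begin{align*}
\int_0^\infty\int_0^\infty|\phi_{W,t_1}(x,y)|\,\mathrm dx\,\mathrm dy\le t_1\int_0^\infty\int_0^\infty W(x,y)\,\mathrm dx\,\mathrm dy<\infty.
\end{align*}

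Finally, (c) is immediate: the hypothesis of Theorem~\ref{thm:poisson_quadratic} already asserts $d\in L_1([0,\infty))$, which is precisely $\int_0^\infty d(x)\,\mathrm dx<\infty$; alternatively one may re-derive it exactly as in the footnote to Observation~\ref{obs:degree_M}, using $\int_0^\infty d_{W_{G_n}}(x)\,\mathrm dx=2\,\E[T_n]\le C$, condition \eqref{eq:dK_condition_II}, and monotone convergence after letting $M\to\infty$ and then $K\to\infty$. The only point that requires any care is the interchange of limits in (a): the cut-distance control is available only on the bounded squares $[0,K]^2$, so the uniform bound $C$ must first be obtained there and then transferred to $[0,\infty)^2$ by monotonicity in $K$; the rest is routine.
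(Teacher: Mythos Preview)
Your proof is correct and follows essentially the same route as the paper: for (a) you use the cut-distance convergence on $[0,K]^2$ together with the uniform bound $\int W_{G_n}=2\E[T_n]\le C$ and then let $K\to\infty$ by monotone convergence, exactly as the paper does; for (b) your convexity argument yielding $|\phi_{W,t_1}|\le t_1 W$ is a sharper version of the paper's $-\phi_{W,t_1}\lesssim_{t_1}W$. For (c) you are right that $d\in L_1([0,\infty))$ is literally part of the hypothesis of Theorem~\ref{thm:poisson_quadratic}, so nothing further is needed; the paper instead re-derives this bound from condition~\eqref{eq:dK_condition_II} and \eqref{eq:ETn}, which is redundant given the stated assumptions but has the minor virtue of showing the $L_1$ hypothesis on $d$ is not an additional constraint.
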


\begin{proof}  Fixing $K\ge 1$, gives  $\frac{1}{r_n^2} |E(G_n)| \ge \frac{1}{r_n^2} |E(G_{n,K}^+)|=\frac{1}{2} \int_{[0,K]^2}W_{G_n}(x,y)\mathrm dx \mathrm dy$, 
which on letting $n\rightarrow\infty$ along with assumption \eqref{eq:WK_condition_I},  gives
$$\int_{[0,K]^2}W(x,y)\mathrm dx \mathrm d y \lesssim \limsup_{n\rightarrow\infty}\frac{1}{r_n^2} |E(G_n)|.$$
Since this holds for every $K\ge 1$, letting $K\rightarrow\infty$ along with Monotone Convergence Theorem gives
$$\int_{[0,\infty)^2}W(x,y)\mathrm dx \mathrm dy\lesssim \limsup_{n\rightarrow\infty} \frac{|E(G_n)|}{r_n^2}=O(1),$$
by \eqref{eq:ETn}. This completes the proof of (a).

The conclusion in part (b) follows immediately by invoking part (a) and noting that $0\le - \phi_{W, t_1}(x, y)\lesssim_{t_1} W(x,y)$.  

To show (c), note that by condition \eqref{eq:dK_condition_II}, for $K$, $M$ large enough, 
$$\int_0^Kd(x)\bm 1\{d(x)\le M\}\mathrm dx=\lim_{n\rightarrow\infty}\int_0^K d_{W_{G_n}}(x)\bm 1\{d_{W_{G_n}}(x)\le M\}\mathrm dx\lesssim \limsup_{n\rightarrow\infty}\frac{|E(G_n)|}{r_n^2}.$$
Taking limit $K\rightarrow\infty$ followed by $M \rightarrow\infty$ on both sides, gives $\int_0^\infty d(x)\mathrm \mathrm dx \lesssim  \limsup_{n\rightarrow\infty}\frac{ |E(G_n)|}{r_n^2}$, from which the desired conclusion follows on using \eqref{eq:ETn}. 
\end{proof}

\section{Proof of Theorem \ref{thm:distributional_limit}} 
\label{sec:pf_distributional_limit}

We begin by recalling that $\cW_K$ is the set of all symmetric measurable functions from $[0, K]^2 \rightarrow [0, 1]$. Denote by $\cM_K$ the set of all measure preserving bijections $\phi$ from $[0, K] \rightarrow [0, K]$.  Moreover, for any function $\phi \in \cM_K$, let $W^{\phi}(x, y)= W(\phi(x), \phi(y))$ and $f^\phi(x)=f(\phi(x))$, for $W \in \cW_K$ and $f: [0, K] \rightarrow [0, M]$. The following proposition shows the joint sequential compactness of the cut-metric and the $L_1$ distance. 

\begin{ppn}\label{ppn:Wnfn} Fix $K, M \geq 1$. Then given a sequence of measurable functions $W_n \in \cW_K$ and a sequence of measurable functions $f_n: [0, K] \rightarrow [0, M]$, there exists a subsequence $\{n_s\}_{s \geq 1}$ such that, 
$$\lim_{s \rightarrow \infty}\inf_{\phi \in \cM_K}\left\{ ||W_{n_s}^{\phi} -W_K||_{\square([0, K]^2)} + ||f_{n_s}^\phi -f_K||_{L_1([0, K])} \right\} = 0,$$
for some $W_K \in \cW_K$ and $f_K \in L_1([0, K])$. 
\end{ppn}

The proof of the proposition is given below in Section \ref{sec:ppnWnfn}. First, we use it to complete the proof of Theorem \ref{thm:distributional_limit}. To this end, suppose \eqref{eq:ETn} holds and $T_n$ converges in distribution to a random variable $T$. Begin by labeling the vertices of $G_n$ in non-increasing order of the degrees. Now, fix $M \in \cD$ (as in Definition \ref{defn:M_points}) and recall the definition of $T_{n,M}$ from \eqref{eq:M}, and use Lemma \ref{Tnapprox} to note that $T_{n,M}\dto T$, under the double limit as $n\rightarrow\infty$ followed by $M\rightarrow\infty$. Next, fix $K \geq 1$ and recall from \eqref{eq:Tn123}, 
\begin{align}\label{eq:Tn123_II}
T_{n, M}=T_{n, K, M}^+ + T_{n, K, M}^\pm + T_{n, K, M}^-. 
\end{align}
We will now proceed to find a subsequence $\{n_s\}_{s \geq 1}$ along which the RHS above will have a limiting distribution in the form \eqref{eq:Q1Q2Q3}. 

To begin with, observe that $\int_K^\infty \int_K^\infty W_{G_n}(x, y) \mathrm d x \mathrm d y \lesssim \frac{|E(G_n)|}{r_n^2} \lesssim 1$  by  \eqref{eq:ETn}, for $n$ large enough. Therefore, for every $K \geq 1$ fixed, there exists a subsequence depending on $K$ such that 
$$\lambda_0(K):=\lim_{n \rightarrow \infty} \frac{1}{2} \int_K^\infty \int_K^\infty W_{G_{n}}(x, y) \mathrm d x \mathrm d y$$ exists along that subsequence. Therefore, refining the subsequences at every stage and by a diagonalization argument, there exists a common subsequence $\{n_s\}_{s \geq 1}$ along which  $$\lim_{s \rightarrow \infty} \frac{1}{2}  \int_K^\infty \int_K^\infty W_{G_{n_s}}(x, y) \mathrm d x \mathrm dy=\lambda_0(K),$$ for every $K \geq 1$. Now, note that  $$\lambda_0(K+1)=\lim_{s \rightarrow \infty} \frac{1}{2}  \int_{K+1}^\infty \int_{K+1}^\infty W_{G_{n_s}}(x, y) \leq  \lim_{s \rightarrow \infty} \frac{1}{2}  \int_{K}^\infty \int_{K}^\infty W_{G_{n_s}}(x, y) \mathrm d x \mathrm dy=\lambda_0(K),$$
which implies 
\begin{align}\label{eq:lambda_s}
\lambda_0:=\lim_{K \rightarrow \infty} \lambda_0(K)=\lim_{K \rightarrow \infty}\lim_{s \rightarrow \infty} \frac{1}{2} \int_K^\infty \int_K^\infty W_{G_{n_s}}(x, y) \mathrm d x \mathrm d y
\end{align} 
exists.

Next, applying Proposition \ref{ppn:Wnfn} on the functions $$W_{G_n, K, M}(x, y):=W_{G_n}(x, y) \bm 1\{x, y \in [0, K], ~d_{W_{G_n, K}}(x) \leq M, ~d_{W_{G_n, K}}(y) \leq M\}$$ and $$d_{W_{G_n}, K}(x|M):=d_{W_{G_n}}(x) \bm 1\{x \in [0, K],~d_{W_{G_n, K}}(x) \leq M \},$$  
gives a sequence of functions $\phi_{n, K} \in \cM_K$ and a subsequence $\{n_s\}_{s \geq 1}$ such that, 
$$\lim_{s \rightarrow \infty}  ||W_{G_{n_s}, K, M}^{\phi_{n_s, K}} -W_{K,M}||_{\square([0, K]^2)} \quad \text{and} \quad || d_{W_{G_{n_s}}, K, M}^{\phi_{n_s, K}}(\cdot|M)  -d_{K, M}||_{L_1([0, K])} = 0,$$
for some $W_{K, M} \in \cW_K$ and $d_{K, M} \in L_1([0, K])$. This shows that, along this subsequence the assumptions of Lemma \ref{lm:moment_limit} are satisfied, therefore, by Lemma \ref{lm:t1t2}, along this subsequence 
\begin{align}\label{eq:T_s}
T_{n_s, K, M}^+ + Z_{n_s, K, M} \rightarrow J_{1, K, M} + J_{2, K, M}, 
\end{align}
in distribution and in moments, where the joint moment generating function of $(J_{1, K, M}, J_{2, K, M})$ is given by: For $t_1, t_2 \geq 0$, 
\begin{align*}
\E \exp&\Big\{-t_1 J_{1, K, M}-t_2 J_{2, K, M} \Big\}\nonumber \\
&=\E \exp\left\{\frac{1}{2}\int_0^\infty \int_0^\infty \phi_{W_K, t_1}(x, y) \mathrm d N(x)\mathrm dN(y)-(1-e^{-t_2}) \int_0^\infty \Delta_{K, M}(x)\mathrm d N(x)\right\},
\end{align*}
with
\begin{itemize}

\item $\phi_{W_{K, M}, t_1}(x, y):=\log(1-W_{K, M}(x, y)+W_{K, M}(x, y)e^{-t_1})$.

\item $\Delta_{K, M}(x):=  d_{K, M}(x)-\int_0^\infty W_{K, M}(x, y) \mathrm dy$.

\end{itemize}

Now, by the convergence in moments and Lemma \ref{lm:TZ}, for every integer $r \geq 1$, $$\E[(J_{1, K, M} + J_{2, K, M})^r] = \lim_{s \rightarrow \infty} \E [(T_{n_s, K, M}^+ + T_{n_s, K, M}^\pm)^r] \lesssim_{r, M} 1.$$ 
Therefore, there exists a subsequence $\{K_j\}_{j \geq 1}$ such that as $j \rightarrow \infty$,  $J_{1, K_j, M} + J_{2, K_j, M} \rightarrow J_M$, for some random variable $J_M   \in \overline{\cP}(\cW, \cF)$ (recall Definition \ref{defn:Wf}), in distribution and in moments. Therefore, refining the subsequence in \eqref{eq:lambda_s} and \eqref{eq:T_s}, and using the independence of $T_{n_s,K_j,M}^+ + Z_{n_s,K_j,M}$ and $W_{n_s,K_j}^-$,  it follows that, as $s \rightarrow \infty$ followed by $j \rightarrow \infty$,
$$T_{n_s,K_j,M}^++Z_{n_s,K_j,M}+W_{n_s,K_j}^- \rightarrow J_M + J_0,$$ 
in distribution and in moments, where $J_0 \sim \dPois(\lambda_0)$ and $J_{0}$ independent of $J_M$, for all $M \in \cD$. Then by the proof of Lemma \ref{lm:limit_M}, it follows that as, $s \rightarrow \infty$ followed $j \rightarrow \infty$, 
$$T_{n_s, K_j, M}^+ + T_{n_s, K_j, M}^\pm + T_{n_s, K_j, M}^- \dto J_M + J_0.$$ Now, as $T_{n_s,M}\dto T$, when $s \rightarrow \infty$ followed by $M \rightarrow \infty$,  recalling \eqref{eq:Tn123_II} we get, $ J_M + J_0 \dto J +J_0 \stackrel{D}= T$, as $M \rightarrow \infty$, where $J$ is independent of $J_0$ and $J \in \overline{\cP}(\cW, \cF)$, since $J_M \in \overline{\cP}(\cW, \cF)$.

\subsection{Proof of Proposition \ref{ppn:Wnfn}}\label{sec:ppnWnfn} Without loss of generality, we assume $K=M=1$. Hereafter, we fix $L \geq 1$. Then, we have the following: 
\begin{itemize}

\item For the graphon $W_n \in \cW_1$ by the weak regularity lemma \cite[Corollary 3.4]{graph_limits_I}, we can find a partition $\Pi_{n, L}=\{\pi_{n, L}(i)\}_{i \in [q_L]}$ of $[0, 1]$ into measurable sets, with $q_L\lesssim_L 1$ (a constant depending only on $L$), such that 
\begin{align}\label{eq:WnL}
||W_n-W_{n, L}||_{\square([0, 1]^2)} \le \tfrac{1}{L}.
\end{align}
where
$$W_{n, L}(x, y)=b_{W_n, L}(i, j)=\frac{1}{\lambda(\pi_i) \lambda(\pi_j)} \int_{\pi_i \times \pi_j}  W_n(x, y) \mathrm d x \mathrm d x,$$ for $x \in \pi_i$ and $y \in \pi_j$. (Here, $\lambda(A)$ denotes the Lebesgue measure of a measurable set $A \subset [0,  1]$.) Moreover, the partitions can be constructed in such a way that $\Pi_{n, L+1}=\{\pi_{n, L+1}(i)\}_{a \in [q_{L+1}]}$ is a refinement of $\Pi_{n, L}$ (by \cite[Corollary 3.4]{graph_limits_I}). 

\item Similarly, for the function $f_n$, there exists a partition $\Gamma_{n, L}=\{\gamma_{n, L}(i)\}_{i \in[r_L]}$ of $[0, 1]$ into $r_L \lesssim_L 1$ (a constant depending only on $L$) measurable sets and a vector ${\bm z}_{f_n, L}=(z_{f_n, L}(i))_{i \in [r_L]}$ with entries in $[0,1]$, such that the function 
\begin{align}\label{eq:fnL}
f_{n, L}(x):=z_{f_n, L}(i) := \frac{1}{\lambda(\gamma_{n, L}(i))}\int_{\gamma_{n, L}(i)} f_n(x) \mathrm d x \quad \text{ if } \quad x\in \gamma_{n, L}(i), 
\end{align}
satisfies 
\begin{align}\label{eq:fnL_II}
||f_n-f_{n, L}||_{L_1([0, 1])} \le \tfrac{1}{L}. 
\end{align}
Moreover, as before, the partitions can be constructed in such a way that $\Gamma_{n, L+1}=\{\gamma_{n, L+1}(i)\}_{a \in [r_{L+1}]}$ is a refinement of $\Gamma_{n, L}$.  
\end{itemize}

Given the partitions $\Pi_{n, L}=\{\pi_{n, L}(i)\}_{i \in[q_L]}$ and $\Gamma_{n, L}=\{\gamma_{n, L}(i)\}_{i \in[r_L]}$, the class of sets $$\left\{\theta_{n, L}(i_1, i_2):=\pi_{n, L}(i_1) \bigcap \gamma_{n, L}(i_2) \right \}_{i_1\in [q_L],~ i_2 \in [r_L]},$$ forms a partition of $[0,1]$, which refines both the partitions $\Pi_{n, L}$ and $\Gamma_{n, L}$ (with possibly some empty sets). Relabel  the sets $\{\theta_{n,L}(i_1, i_2)\}_{i_1\in [q_L],~ i_2 \in [r_L]}$ by $\{\theta_{n,L}(i)\}_{i \in [q_L r_L]}$ by taking a bijection from $[q_L] \times [r_L] \rightarrow [q_L r_L]$, and denote this partition of $[0, 1]$ by $\Theta_{n, L}:=\{\theta_{n,L}(i)\}_{i \in [q_L r_L]}$. Now, setting $\beta_{n, L}(i):=\lambda(\theta_{n, L}(i))$, there exists  a measure preserving bijection $\phi_{n, L}:[0, 1] \rightarrow [0, 1]$ such that 
the interval 
$$\left(\sum_{j=1}^{a-1} \beta_{n, L}(i), \sum_{j=1}^a \beta_{n, L}(i) \right] \text{ maps to the set } \theta_{n, L}(a), \text{ for each } 1 \leq a \leq q_L r_L.$$ 
Thus, the functions $W_{n, L}^{\phi_{n, L}}$ and $f_{n, L}^{\phi_{n, L}}$ are both step functions on $[0, 1]^2$ and $[0, 1]$ with intervals and rectangles as steps, respectively. Then, we can find a common subsequence $\{n_s\}_{s \geq 1}$ along which the sequence of vectors $$\left(\{\beta_{n_s, L}(i)\}_{i \in [q_L r_L]},~{\bm B}_{W_{n_s}, L},~{\bm z}_{f_{n_s}, L} \right) \in [0,1]^{q_L r_L + q_L^2 + r_L}$$ converge. (Here, we consider ${\bm B}_{W_{n_s}, L}$ as a vector of length $q_L^2$.) In particular, this means that along this subsequence the functions $W_{n_s, L}^{\phi_{n, L}}$ and $f_{n_s, L}^{\phi_{n, L}}$ converge almost surely to step functions $W_L:[0, 1]^2 \rightarrow [0, 1]$ and $f_L: [0, 1] \rightarrow [0, 1]$,  respectively. 
 
Now, let $(U,V) \sim \mathrm{Unif}([0,1]^2)$, and let $\cF_L$ denote the sub-sigma algebra of $\sB([0, 1]^2)$ (the Borel sigma algebra on $[0, 1]^2$) defined as 
$$\cF_L:=\sigma\left(\bm 1\left\{U \in \left(\sum_{j=1}^{i_1-1} \beta_L(j), \sum_{j=1}^{i_1} \beta_L(j)\right] \right\}, \bm 1\left\{V\in \left(\sum_{j=1}^{i_2-1} \beta_L(j), \sum_{j=1}^{i_2} \beta_L(j) \right] \right\}, i_1, i_2 \in [q_L r_L] \right),$$ where $\beta_L(i) =\lim_{s \rightarrow \infty}\beta_{n_s, L}(i)$ for $i \in [q_L r_L]$. Since the partition $\Theta_{n, L+1}=\{\theta_{n, L+1}(i)\}_{i\in [q_{L+1} r_{L+1}]}$ is a refinement of $\Theta_{n, L}=\{\theta_{n, L}(i)\}_{i\in [q_L r_L]}$, it follows that $\{\cF_L\}_{L \geq 1}$ is a filtration. Also, the construction implies that for any $(x,y)\in (0,1]^2$ such that $$(x,y)\in \left(\sum_{j=1}^{i_1-1} \beta_L(j), \sum_{j=1}^{i_1} \beta_L(j) \right] \times \left(\sum_{j=1}^{i_2-1} \beta_L(j), \sum_{j=1}^{i_2} \beta_L(j) \right], \quad \text{ where } \quad 1 \leq i_1, i_2 \leq  q_L r_L,$$ we have
\begin{align*}
W_{L}(x, y)=&\E\left(W_{L+1}(U, V) \Big| (U, V) \in \left(\sum_{j=1}^{i_1-1} \beta_L(j), \sum_{j=1}^{i_1} \beta_L(j) \right] \times \left(\sum_{j=1}^{i_2-1} \beta_L(j), \sum_{j=1}^{i_2} \beta_L(j) \right] \right),
 \end{align*}
and 
\begin{align*}
f_L(x)=& \E \left(f_{L+1}(U) \Big| U \in \left(\sum_{j=1}^{i_1-1}\beta_L(j), \sum_{a=1}^{i_1} \beta_L(j) \right]\right).
 \end{align*} 
Thus, both $W_L$ and $f_L$ are bounded martingales with respect to the filtration $\{\cF_L\}_{L \geq 1}$, and so they converge almost surely and in $L_1$ to functions $W_\infty$ and $f_\infty$, as $L \rightarrow \infty$, respectively. Therefore, by the triangle inequality, 
\begin{align}\label{eq:WnfnL}
& \inf_{\phi \in \cM_1}\left\{ ||W_{n_s}^{\phi} -W_\infty||_{\square([0, 1]^2)} + ||f_{n_s}^\phi -f_\infty||_{L_1([0, 1])} \right\} \leq S_1+S_2+S_3, 
\end{align}
where $S_1, S_2, S_3$ are defined as follows: 
$$S_1:=||W_{n_s}^{\phi_{n_s, L}}-W_{n_s, L}^{\phi_{n_s, L}}||_{\square([0, 1]^2)}+||f_{n_s}^{\phi_{n_s, L}} - f_{n_s,L}^{\phi_{n_s, L}}||_{L_1([0, 1])} \leq \tfrac{2}{L},$$
where the last inequality uses \eqref{eq:WnL} and \eqref{eq:fnL_II}. 
Next, 
\begin{align*}
S_2:= ||W_{n_s, L}^{\phi_{n_s, L}}-W_L||_{\square([0, 1]^2)} & + ||f_{n_s, L}^{\phi_{n_s, L}}-f_L||_{L_1([0, 1])}  \nonumber \\ 
& \leq ||W_{n_s, L}^{\phi_{n_s, L}}-W_L||_{L_1([0, 1]^2)} + ||f_{n_s,L}^{\phi_{n_s, L}}-f_L||_{L_1([0, 1])},
\end{align*}
which goes to zero as $s \rightarrow \infty$, using the fact that $W_{n_s, L}^{\phi_{n_s, L}}$ and $f_{n_s,L}^{\phi_{n_s, L}}$ converges in $L_1$ to $W_L$ and $f_L$, respectively.
Finally, 
$$S_3:= ||W_L-W_\infty||_{\square([0, 1]^2)} + ||f_L-f_\infty||_{L_1([0, 1])} \leq  ||W_L-W_\infty||_{L_1([0, 1]^2)} + ||f_L-f_\infty||_{L_1([0, 1])},$$
which goes to zero as $L \rightarrow \infty$, using $W_L \stackrel{L_1} \rightarrow W$ and $f_L \stackrel{L_1} \rightarrow f$. Putting together the above three bounds with \eqref{eq:WnfnL}, and taking limit as $s \rightarrow \infty$ followed by $L \rightarrow \infty$,  the result follows. \hfill $\Box$

\section{Proofs of Corollaries} 
\label{sec:pfcorollary}

In this section we prove Corollaries \ref{cor:poisson_quadratic_W}, \ref{cor:trunc}, and \ref{cor:quadratic_general}.

\subsection{Proof of Corollary \ref{cor:poisson_quadratic_W}} As $\{G_n\}_{n \geq 1}$ is a sequence of dense graphs, assumption \eqref{eq:ETn} implies that $r_n=1/p_n >  Cn$, for some constant $C>0$, when $n$ is large enough. 
Therefore, by the definition in \eqref{eq:WGn}, $W_{G_n}$ is zero is outside the box $[0, a]^2$, where $a := 1/C$. Hence, 
\begin{align}\label{eq:Q3W}
\lim_{K \rightarrow \infty} \lim_{n\rightarrow \infty} \frac{1}{2}\int_K^\infty \int_K^\infty W_{G_n}(x, y)  \mathrm dx \mathrm dy =0.
\end{align} 
We begin by showing that $W$ vanishes Lebesgue almost everywhere outside the rectangle $[0,a]^2$. To see this, let $f(x) =\bm 1\{x\geq a\}$ and $g(x) = \bm 1\{x\leq a\}$ for all $x \geq 0$. Fix $L >0$, and observe that:
\begin{align}\label{eq:intbr1}
 \int_a^{a+L}\int_0^a W(x,y)\mathrm dx \mathrm dy & = \int_{[0,a+L]^2} \left(W(x,y) - W_{G_n}(x,y)\right)f(x)g(y) \mathrm dx \mathrm dy \nonumber  \\ 
& \leq ||W-W_{G_n}||_{\square([0, a+L]^2)}. 
\end{align}
Since the RHS of \eqref{eq:intbr1} converges to $0$ and $W \geq 0$, $W$ must vanish Lebesgue almost everywhere on the rectangle $[a,a+L]\times[0,a]$. This means, since $L>0$ is  arbitrary, $W$ vanishes Lebesgue almost everywhere on $[a,\infty)\times[0,a]$. Interchanging the roles of $f$ and $g$, it follows that $W$ vanishes Lebesgue almost everywhere on $[0,a]\times [a,\infty)$. Finally, taking $f(x) = g(x) = \bm{1}\{x \geq a\}$, for all $x \geq 0$, and proceeding as above, we can show that $W$ vanishes  Lebesgue almost everywhere on $[a,\infty)\times [a,\infty)$, as desired.

Now, fix $K \geq a$ such that $||W_{G_n} -W||_{\square([0, K]^2)} \rightarrow 0$. Next,  let $d_W(x)=\int_0^\infty W(x, y)\mathrm d y = \int_0^a W(x, y)\mathrm d y $. Then, we have
$$\lim_{n \rightarrow \infty}\int_0^K (d_{W_{G_n}}(x)-d_W(x))^2 \mathrm dx \rightarrow 0,$$
This is because, $||W_{G_n} - W||_{\square([0, K]^2)} \rightarrow 0$ implies that 
$$\int_0^K d_{W_{G_n}}(x)^2 \mathrm d x \rightarrow \int_0^K d_{W}(x)^2 \mathrm d x \quad \text{and} \quad \int_0^K d_{W_{G_n}}(x) d_W(x) \mathrm d x \rightarrow \int_0^K d_{W}(x)^2 \mathrm d x.$$ Then, by the Cauchy-Schwarz inequality, $||d_{W_{G_n}} - d_W||_{L_1([0, K])} \rightarrow 0$, for all $K \geq a$ such that $||W_{G_n}-W||_{\square([0, K]^2)} \rightarrow 0$. This shows, condition \eqref{eq:dK_condition_II} holds with $d=d_W$, for $M \in \cD$ (recall Definition \ref{defn:M_points}). Therefore, $\Delta(x):=d_W(x)-\int_0^\infty W(x, y) \mathrm dy=0$, and the result in \eqref{eq:Q1W} follows from Theorem \ref{thm:poisson_quadratic}, with $\Delta=0$ and $\lambda_0=0$ (by \eqref{eq:Q3W}).  

To show (b), note that for any sequence of dense graphs $\{G_n\}_{n \geq 1}$, there exists a constant $a >0$ and a subsequence $\{n_j\}_{j \geq 1}$ along which $\lim_{j \rightarrow \infty}\delta_{\square([0, a]^2)}(W_{G_{n_j}}, W)=0$, for some $W \in \cW_a$ (by the compactness of the metric $\delta_{\square([0, a]^2)}$ in the space $\cW_a$, the space of all symmetric measurable functions from $[0, a]^2 \rightarrow [0, 1]$ \cite[Proposition 3.6]{graph_limits_I}). 
This implies, recalling \eqref{eq:Wdelta}, there exists a sequence of measure preserving bijections $\{\phi_{n_j}\}_{j \geq 1}$ such that $\lim_{j \rightarrow \infty}||W_{G_{n_j}}^{\phi_{n_j}} - W||_{\square([0, a]^2)}=0$. Therefore, by part (a) (recall the discussion in the second item in Remark \ref{rem:poisson_condition} and Lemma \ref{lm:moment_limit}), along this subsequence $T_{n_j} \dto Q_1$, where the moment  generating function of $Q_1$ is as given in \eqref{eq:Q1W}. This implies $T_n \dto Q_1$,  since, by assumption, $T_n$ converges in distribution, as required. \hfill $\Box$

\subsection{Proof of Corollary \ref{cor:trunc}} Note that $(c)\Rightarrow (a)$ is immediate from Theorem \ref{thm:poisson_quadratic}. Hence, it suffices to show that $(b)\Rightarrow (c)$ and $(a)\Rightarrow (b)$.

We begin with the proof of $(b)\Rightarrow (c)$. Denote by $K_{1,2}$, the 2-star graph and let $V_M:=\{v\in G_n:d_v\le Mr_n\}$. Then 
\begin{align*}
\Var(T_{n,M})=(1-p_n^2)\E T_{n,M}+2p_n^3(1-p_n)N(K_{1,2},G_n[V_M]),
\end{align*}
where $N(K_{1,2},G_n[V_M])$ is the number of 2-stars in the subgraph on $G_n$ induced on the vertex set $V_M$. Note that the conditions $\lim_{M \rightarrow \infty} \lim_{n \rightarrow \infty} \E(T_{n,M})=\lambda$ and $\lim_{M \rightarrow \infty} \lim_{n \rightarrow \infty} \Var(T_{n,M})=\lambda$ mean, $\lim_{M \rightarrow \infty} \lim_{n \rightarrow \infty} p_n^3 N(K_{1,2},G_n[V_M])= 0$. Letting $d_{W_{G_n[V_M]}}(x)=\int_0^\infty W_{G_n}(x,y)\bm 1\{d_{W_{G_n}}(x) \leq M, d_{W_{G_n}}(y) \leq M\}\mathrm dy$, then gives, 
\begin{align}
\int_0^\infty d_{W_{G_n[V_M]}}(x)^2 \mathrm dx  \lesssim p_n^3 N(K_{1,2}, G_n[V_M] )+ p_n^3|E(G_{n,M})|\rightarrow 0,
\label{eq:double_limit}
\end{align}
under the same double limit, where we invoke \eqref{eq:ETn} to deal with the second term.

We will now verify condition \eqref{eq:dK_condition_II} in Theorem \ref{thm:poisson_quadratic}. To see this, observe 
\begin{align*}
d_{W_{G_n}}(x)  {\bm 1\{d_{W_{G_n}}(x) \leq M\}} =&d_{W_{G_n[V_M]}}(x)+\int_0^\infty W_{G_n}(x,y)\bm 1\{d_{W_{G_n}}(y)>M\}\mathrm dy\\
\le &d_{W_{G_n[V_M]}}(x)+\frac{1}{M}\int_0^\infty d_{W_{G_n}}(y) \mathrm dy. 
\end{align*}
Therefore, 
\begin{align*}
\int_0^K d_{W_{G_n}}(x)^2{\bm 1}\{d_{W_{G_n}}(x)\le M\}  \mathrm dx & \le 2\int_0^\infty d_{W_{G_n[V_M]}}(x)^2\mathrm dx+\frac{2}{M^2} \int_0^K \left(\int_0^\infty d_{W_{G_n}}(y) \mathrm dy\right)^2 \mathrm dx  \\  
& = 2\int_0^\infty d_{W_{G_n[V_M]}}(x)^2\mathrm dx+\frac{2 K}{M^2} \left(\int_0^\infty    d_{W_{G_n}}(y)  \mathrm dy\right)^2.
\end{align*}
Now, under the double limit $n \rightarrow \infty$ followed by $M \rightarrow \infty$, first term in the RHS above goes to $0$ by \eqref{eq:double_limit}, and the second term goes to $0$ by \eqref{eq:ETn}. This gives, $\lim_{M \rightarrow \infty} \lim_{n \rightarrow \infty} \int_0^K d_{W_{G_n}}(x)^2{\bm 1}\{d_{W_{G_n}}(x)\le M\}  \mathrm dx =0$.
Therefore, by the Cauchy-Schwarz inequality,  
\begin{align}\label{eq:dl}
\lim_{M \rightarrow \infty} \lim_{n \rightarrow \infty} \int_0^K d_{W_{G_n}}(x){\bm 1}\{d_{W_{G_n}}(x)\le M\}  \mathrm dx =0. 
\end{align} 
This implies, $\limsup_{n \rightarrow \infty} \int_0^K d_{W_{G_n}}(x){\bm 1}\{d_{W_{G_n}}(x)\le M\}  \mathrm dx=0$, for all $M$, since $$\limsup_{n \rightarrow \infty} \int_0^K d_{W_{G_n}}(x){\bm 1}\{d_{W_{G_n}}(x)\le M\}  \mathrm dx$$ is non-decreasing in $M$. This shows \eqref{eq:dK_condition_II} with $d=0$.

Next, we will show that $\lim_{n \rightarrow \infty} ||W_{G_n}||_{L_1([0,K]^2)} = 0$, for  every $K>0$ (which implies \eqref{eq:WK_condition_I} holds with $W=0$, since $\lim_{n \rightarrow \infty} ||W_{G_n}||_{\square([0,K]^2)} \leq \lim_{n \rightarrow \infty} ||W_{G_n}||_{L_1([0,K]^2)} = 0$). To this end,  we have
\begin{align*}
\int_{[0,K]^2}W_{G_n}(x,y)\mathrm dx \mathrm dy \le &\int_{[0,K]^2}W_{G_n}(x,y){\bm 1}\{d_{W_{G_{n}}}(x)\le M\}\mathrm dx \mathrm dy+\int_{[0,K]^2}\frac{d_{W_{G_n}}(x)}{M}\mathrm dx \mathrm dy\\
=&  \int_0^K d_{W_{G_n}}(x){\bm 1}\{d_{W_{G_{n}}}(x)\le M\}\mathrm dx+\frac{K}{M}\int_0^K d_{W_{G_n}}(x)\mathrm dx.
\end{align*}
On letting $n\rightarrow\infty$ followed by $M\rightarrow\infty$, the first term above converges to $0$ by \eqref{eq:dl},  and the second term converges to $0$ by \eqref{eq:ETn}. This implies $W_{G_n}$ converges to $0$ on $L_1([0,K]^2)$, verifying condition \eqref{eq:WK_condition_I}  in Theorem \ref{thm:poisson_quadratic} with $W=0$. 

Finally, we verify condition (a) of Theorem \ref{thm:poisson_quadratic}. Using  \eqref{eq:label} note that for all $K$ large enough there exists an integer $n(K)$, such that if $n>n(K)$, we have $d_v< r_n$, for all $v\in [ \lceil K r_n\rceil+1,n]$. In particular, for $M>1$ this implies 
\begin{align*}
0 \leq \int_0^\infty \int_0^\infty  & W_{G_{n,M}}(x,y)\mathrm dx\mathrm dy
- \int_K^\infty \int_K^\infty W_{G_n}(x,y)\mathrm dx\mathrm dy\\
 & \leq  2\int_0^\infty \int_0^K W_{G_n}(x,y){\bm 1}\{d_{W_{G_n}}(x)\le M\} \mathrm dx\mathrm dy \nonumber \\
& =  2\int_0^K d_{W_{G_n}}(x){\bm 1}\{d_{W_{G_n}}(x)\le M\}  \mathrm dx,
 \end{align*}
which converges, under the double limit, to $0$ by \eqref{eq:dl}, for every $K\geq 0$ fixed.  Hence, 
\begin{align*}
\lim_{K \rightarrow \infty} \lim_{n \rightarrow \infty} \frac{1}{2} \int_K^\infty \int_K^\infty W_{G_n}(x,y)\mathrm dx\mathrm dy & =  \lim_{M \rightarrow \infty} \lim_{n \rightarrow \infty}  \frac{1}{2} \int_0^\infty \int_0^\infty  W_{G_{n,M}}(x,y)\mathrm dx\mathrm dy \nonumber \\ 
&= \lim_{M \rightarrow \infty} \lim_{n \rightarrow \infty}  \E T_{n, M}=\lambda,
\end{align*}
verifying condition (a) of Theorem \ref{thm:poisson_quadratic}. This completes the proof of  $(b)\Rightarrow (c)$.

To prove $(a)\Rightarrow (b)$, recall the definition of $T_{n,M}$ from \eqref{eq:M}, and use Lemma \ref{Tnapprox} to note that $T_{n,M}\dto \dPois(\lambda)$, under the double limit as $n\rightarrow\infty$ followed by $M\rightarrow\infty$. 
Now, by a diagonalization argument, given any subsequence we can find a further subsequence $\{n_j\}_{j \geq 1}$ such that $\mu_{a, M}:=\lim_{j \rightarrow \infty}\E T_{n_j, M}^a$ converges, for all $a \geq 1$, by uniform integrability, since the moments $\sup_{n \in N}\E T_{n,M}^a  \lesssim_{M, a} 1$, are bounded (recall \eqref{eq:moments_TM}). Recall from the proof of Lemma \ref{lm:limit_M} that the moments $\{\mu_{a, M}\}_{a \geq 1}$ satisfy the Stieltjes moment condition. Therefore, along the subsequence, $T_{n_j, M} \rightarrow T_M$, in distribution and in moments, for some random variable $T_M$. Finally note that the random variables $T_{n_j,M}$ are non decreasing in $M$, and so the sequence $\{T_M\}_{M\ge 1}$ is stochastically increasing, and converges in distribution to $\dPois(\lambda)$.  Then, by the Monotone Convergence Theorem, $\E(T_{n_j,M}^a)$ converges to $\E(\dPois(\lambda)^a)$, for all integers $a \geq 1$, under the double limit. In particular, (b) follows from convergence of the first two moments.

\subsection{Proof of Corollary \ref{cor:quadratic_general}} Define $Y_i:=X_i \bm  1\{X_i\le 1\}$, for $1 \leq i \leq n$, and denote by 
$$T_n'=\frac{1}{2}\sum_{1\leq u, v \leq n}  a_{uv}(G_n) Y_u Y_v.$$
To begin with, note that the event $\{|T_n-T_n'|>0\}$ is contained in the following event: there exists  $(u, v)\in E(G_n)$ such that either $\{X_u \ge 2 \text{ and } X_v\ge 1\}$ or $\{X_u \ge 1 \text{ and } X_v \ge 2\}$. Therefore, by a union bound, 
\begin{align*}
\P(|T_n-T_n'|>0)\le 2\sum_{(u, v)\in E(G_n)}\P(X_u \ge 2)\P(X_v \ge 1) &  =2 |E(G_n)| \P(X_1 \ge 2)\P(X_1 \ge 1) \nonumber \\
& =  |E(G_n)| o(p_n^2),
\end{align*}
using $\P(X_1\geq 1) \leq \E(X_1) =O(p_n)$ and $2\P(X_1\geq 2) \leq \E(X_1)- \P(X_1=1) =o(p_n)$ by the assumption that $\lim_{n \rightarrow \infty}\frac{1}{p_n} \E X_1 =  1$.  Since $|E(G_n)|p_n^2=O(1)$ by assumption \eqref{eq:ETn}, it follows that $T_n-T_n' \pto 0$. Using $T_n' \dto Q_1+Q_2+Q_3$, by Theorem \ref{thm:poisson_quadratic}, the result follows. \\

\small{\noindent{\bf Acknowledgement:} The authors thank Shirshendu Ganguly for many illuminating discussions and helpful comments.}

\appendix

\normalsize 

\section{Approximation by Block Functions}
\label{sec:lemmas}

In this section we show that a $L$-block approximation of a $L_1$-integrable function converges to the function in $L_1$. This result has been used in the proof of Theorem \ref{thm:poisson_quadratic}.

\begin{ppn}\label{ppn:block}
Suppose $f:[0,1]^d\rightarrow \R$ is a bounded measurable function. For any integer $L\ge 1$ define the function $f_L:[0,1]^d\rightarrow \R$ as, 
$$f_L(x_1, x_2, \ldots, x_d)=L^d \prod_{i=1}^d  \int_{\frac{\lceil L x_i\rceil-1}{L}}^{\frac{\lceil L x_i\rceil }{L}} f(y_1, y_2, \ldots, y_d) \mathrm d y_1 \mathrm d y_2 \ldots \mathrm d y_d.$$
Then $||f_L-f||_{L_1([0,1]^d)}=\int_{[0,1]^d}|f_L({\bm x})-f({\bm x})|\mathrm d{\bm x} \rightarrow 0$, as $L \rightarrow \infty$. 
\end{ppn}

\begin{proof} Throughout the proof we abbreviate the norm $||\cdot||_{L_1([0,1]^d)}$ as $||\cdot||_1$. Now, fixing $\varepsilon>0$, by standard measure theory arguments, there exists a continuous function $g:[0,1]^d\rightarrow \R$ such that $\sup_{\bm x \in [0, 1]^d} |g(\bm x)| \le \sup_{\bm x \in [0, 1]^d} |f(\bm x)|$, and $||f-g||_1\le \varepsilon$. Then using Jensen's inequality, 
$||f_L-g_L||_1\le ||f-g||_1\le \varepsilon$. An application of triangle inequality then gives, 
$$||f-f_L||_1 \le ||f-g||_1 +||g-g_L||_1 +||g_L-f_L||_1 \le 2\varepsilon+||g-g_L||_1,$$
which implies $\limsup_{L\rightarrow\infty} ||f-f_L||_1 \le 2\varepsilon$, since $||g_L-g||_1\le ||g_L-g||_\infty \rightarrow 0$, by the continuity of $g$. This completes the proof since $\varepsilon>0$ is arbitrary. 
\end{proof}

\section{Stochastic Integration with Respect to a Poisson Process}
\label{sec:integral}

Let $\cX=[0, \infty)$, $\sB(\cX)$ the Borel sigma-algebra on $\cX$, and $\lambda$ is the Lebesgue measure on $(\cX, \sB(\cX))$. Denote  by $L_p(\cX^d)$ the set of all Borel measurable functions $f: \cX^d \rightarrow \R$ such that $\int_{\cX^d} |f(\bm x)|^p \mathrm d \bm x < \infty$, where $\bm x =(x_1, x_2, \ldots, x_d)$ and $\mathrm d \bm x=\mathrm dx_1\cdots \mathrm dx_d$, with respect to the Lebesgue measure on $\cX^d$. In this section, we define stochastic integration with respect to a Poisson process, for functions in $L_1(\cX^d)$. The theory of multiple stochastic integration for square integrable functions, with respect to a general centered Levy process is well-understood (see \cite{stochastic_levy} and the references therein). However, our applications require integration of functions in $L_1$ (for example, the function $\Delta(\cdot)$ in Theorem \ref{thm:poisson_quadratic} is in $L_1(\cX)$, but not in $L_2(\cX)$). In this section, we make the necessary modifications to the standard theory, extending stochastic integration  with respect to a Poisson process to $L_1$ functions.

Let $\{N(A), A \in \sB(\cX) \}$ be the  homogenous Poisson process of rate $1$ (that is, $N(A)\sim \dPois(\lambda(A))$, where $\lambda$ is the Lebesgue measure on $(\cX, \sB(\cX))$), defined on a probability space $(\Omega, \cF, \mu)$. Denote by $\cE_d$ the set of all It\^{o}-elementary functions, having the form
\begin{equation}\label{eq:itelty}
f(t_1, t_2, \ldots, t_d) =\sum_{i_1, i_2, \ldots, i_d=1}^m a_{i_1, i_2, \ldots, i_d} \bm 1_{A_{i_1}\times \cdots \times A_{i_d}} (t_1, t_2, \ldots,  t_d), 
\end{equation}
where $A_1, A_2, \ldots, A_m \in \sB(\cX)$ are pairwise disjoint, and $ a_{i_1, i_2, \ldots, i_d}$ is zero if two indices are equal. Note that an It\^{o}-elementary function need not necessarily be in $L_1(\cX^d)$. We begin by defining multiple It\^{o} integrals for functions in $\cE_d \cap L_1(\cX^d)$.

\begin{defn}\label{defn:integral_elementary} (Multiple It\^{o} integral for elementary functions) The $d$-dimensional It\^{o}-stochastic integral, with respect to the Poisson process $\{N(A), A \in \cB(\cX)\}$, for the function $f \in \cE_d \cap L_1(\cX^d)$ in  \eqref{eq:itelty} is defined as 
$$I_d(f):=\int f(x_1, x_2, \ldots, x_d) \prod_{a=1}^d \mathrm dN(x_a):=\sum_{i_1, i_2, \ldots, i_d=1}^m a_{i_1, i_2, \ldots, i_d} N(A_{i_1})\times \cdots \times N(A_{i_d}).$$ 
\end{defn}

It is easy to verify that this is well-defined, that is, if $f, g \in \cE_d \cap L_1(\cX^d)$, with $f=g$ almost everywhere Lebesgue, then $I_d(f) \stackrel{a.s.}{=} I_d(g)$. The multiple It\^{o} integral for elementary functions also satisfies the following two properties: 

\begin{itemize}

\item (Finiteness) $|I_d(f)| < \infty$ almost surely, for $f \in  \cE_d \cap L_1(\cX^d)$. To see this note that $\E[N(A_{i_1})\times \cdots \times N(A_{i_d})]=\lambda(A_{i_1})\times \cdots \times \lambda(A_{i_d})$, where $\lambda(A)$ denotes the 1-dimensional Lebesgue measure of the set $A$, whenever all the indices $i_1, i_2, \ldots, i_d$ are distinct. Therefore, 
\begin{align}\label{eq:iso}
\E[|I_d(f)|] \le \sum_{i_1, i_2, \ldots, i_d=1}^m |a_{i_1, i_2, \ldots, i_d}| \lambda(A_{i_1})\times \cdots \times \lambda(A_{i_d})= \int_{\cX^d} |f(\bm x)| \mathrm d \bm x < \infty.
\end{align}

\item (Linearity) Given two simple functions $f,g\in \cE_d \cap L_1(\cX^d)$, 
\begin{align}\label{eq:sum}
I_d(f+g) \stackrel{a.s.}{=} I_d(f)+I_d(g),
\end{align}
which is immediate from definitions. 

\end{itemize}

Now, we proceed to define multiple It\^{o} integral for general functions in $L_1(\cX^d)$. To this end, a straightforward modification of the proof of \cite[Theorem 2.1]{ito_integral}  shows that $\cE_d$ is dense in $L_1(\cX^d)$. Therefore, given $f \in L_1(\cX^d)$, there exists a sequence $\{f_n\}_{n \geq 1}$, with $f_n \in \cE_d$, such that $\lim_{n \rightarrow \infty}\int_{\cX^d}|f_n(\bm x)-f(\bm x)| \mathrm d \bm x=0$. (Note that this automatically implies $f_n\in \cE_d \cap L_1(\cX^d)$, for all $n$ large). 

\begin{ppn}\label{ppn:integral} Consider a sequence $\{f_n\}_{n \geq 1}$, with $f_n \in \cE_d$, such that $\lim_{n \rightarrow \infty}||f_n-f||_{L_1(\cX^d)}=0$. Then there exists a random variable $X$ defined on $(\Omega, \cF, \mu)$ such that $I_d(f_n) \stackrel{L_1} \rightarrow X$. Moreover, if $\{g_n\}_{n \geq 1}$, with $g_n \in \cE_d$, is another sequence such that $\lim_{n \rightarrow \infty}||g_n-f||_{L_1(\cX^d)}=0$, then the sequence of random variables $\{I_d(f_n)\}_{n \geq 1}$ and $\{I_d(g_n)\}_{n \geq 1}$ converge to the same limit in $L_1(\Omega)$.
\end{ppn}

\begin{proof}  Define the sequence $\{h_n\}_{n\ge 1}$ as follows: For $n \geq 1$, 
\begin{align*}
h_{2n-1}:=f_n \quad \text{and} \quad h_{2n}:=g_n.
\end{align*}
Note that $\lim_{n \rightarrow \infty}||h_n-f||_{L_1(\cX^d)}=0$. Therefore, given $\varepsilon>0$, there exists $N(\varepsilon)<\infty$ such that if $n_1, n_2 \ge N(\varepsilon)$, then 
$\int_{\cX^d} |h_{n_1}(\bm x)-h_{n_2}(\bm x)| \mathrm d \bm x<\varepsilon$.  This implies, 
\begin{align*}
\E |I_d(h_{n_1})-I_d(h_{n_2})|= &\E |I_d(h_{n_1}-h_{n_2})| \tag{by \eqref{eq:sum}}\\
\le &\int_{\cX^d} |h_{n_1}(\bm x)-h_{n_2}(\bm x)|d\bm x \tag{by \eqref{eq:iso}}\\
<&\varepsilon.
\end{align*}
This shows that $\{I_d(h_n)\}_{n\geq 1}$ is Cauchy in $L_1(\Omega)$, and by the completeness of
the space $L_1(\Omega)$, the result follows. 
\end{proof}

\begin{defn}\label{defn:integral_II} (Multiple It\^{o} integral for general $L_1$-functions) The $d$-dimensional It\^{o}-stochastic integral for a function $f \in L_1(\cX^d)$ (denoted as $I_d(f)$) is defined as the $L_1$ limit of the sequence $\{I_d(f_n)\}_{n \geq 1}$,  where $\{f_n\}_{n \geq 1}$ is a sequence such that $f_n \in \cE_d$ with $\lim_{n \rightarrow \infty}||f_n-f||_{L_1(\cX^d)}=0$.
\end{defn}

This is well-defined by Proposition \ref{ppn:integral}.  Also, as in the case of elementary functions, $I_d(f)$ satisfies the following properties: 
\begin{itemize}

\item (Finiteness) For any $f\in L_1(\cX^d)$, 
\begin{align}\label{eq:integral_bound}
\E|I_d(f)| \le  \int_{\cX^d} |f(\bm x)|\mathrm d\bm x.
\end{align}
To see this, let $\{f_n\}_{n\ge 1}$ be a sequence of elementary functions such that $\lim_{n \rightarrow \infty}||f_n-f||_{L_1(\cX^d)}=0$. Then using \eqref{eq:iso}, 
$$\E|I_d(f_n)|\le \int_{\cX^d} |f_n(\bm x)|\mathrm d\bm x.$$ The desired conclusion then follows on letting $n\rightarrow\infty$ on both sides of the above inequality, since $\E|I_d(f_n)| \rightarrow \E|I_d(f)|$, by Definition \ref{defn:integral_II}. 

\item (Linearity) For any two functions $f, g$ in $L_1(\cX^d)$, $I_d(f+g) \stackrel{a.s.}{=} I_d(f)+I_d(g)$, 
which is immediate from \eqref{eq:sum} and Definition \ref{defn:integral_II}. 

\end{itemize}

The following proposition shows the convergence of stochastic integrals for converging sequence of functions:

\begin{ppn}\label{ppn:limit} Consider a sequence $\{f_n\}_{n \geq 1}$ such that  $\lim_{n \rightarrow \infty}||f_n-f||_{L_1(\cX^d)}=0$. Then $I_d(f_n) \stackrel{L_1}\rightarrow I_d(f)$ in $(\Omega, \cF, \mu)$.
\end{ppn}

\begin{proof} Note that
\begin{align*}
\E |I_d(f_n)-I_d(f)|=\E |I_d(f_n-f)| \le \int |f_n(\bm x)-f(\bm x)| \mathrm d \bm x,
\end{align*}
where the first step uses linearity of stochastic integrals, and the second step uses \eqref{eq:integral_bound}. Taking limit as $n\rightarrow\infty$ on both sides, the result follows. 
\end{proof}

We conclude by computing the 2-dimensional It\^{o} stochastic integral of the block function \eqref{eq:block}.

\begin{example}\label{integralB} Fix $\kappa >0$ and consider the $B$-block function $f: [0, \kappa]^2 \rightarrow [0, 1]$ as defined in \eqref{eq:block}. Let $L \geq 1$ and define  
$$f^{(L)}(x, y)=\sum_{1\leq a \ne b \leq \ceil{\kappa L}}  r_f^{(L)}(a, b) \bm 1\left\{x \in \left[\frac{a-1}{L}, \frac{a}{L}\right]\right\} \bm 1\left\{y \in \left[\frac{b-1}{L}, \frac{b}{L}\right]\right\},$$
where  
\begin{align}\label{eq:fL}
r_f^{(L)}(a, b):= L^2 \int_{\frac{a-1}{L}}^{\frac{a}{L}} \int_{\frac{b-1}{L}}^{\frac{b}{L}} f(u,v)  \mathrm du \mathrm dv.
\end{align}
Note that the sum is over $a\ne b$, that is, $f^{(L)}(x, y)=0$ when $x, y \in [\frac{a-1}{L}, \frac{a}{L}]$, for some $1 \leq a \leq L$.  Therefore, this is the $L$-step piecewise constant approximation of $f$, with zeros on the diagonal blocks. By taking $L$ large enough, it follows that $f(x, y)=r_f^{(L)}(a, a) \lesssim 1$, for $x, y \in [\frac{a-1}{L}, \frac{a}{L}]$, which means 
$$\sum_{a=1}^L\int_{\frac{a-1}{L}}^{\frac{a}{L}} \int_{\frac{a-1}{L}}^{\frac{a}{L}} f(x, y) \mathrm dx \mathrm d y \lesssim \frac{1}{L} \rightarrow 0.$$ 
Then by Proposition \ref{ppn:block}, 
$\lim_{L \rightarrow \infty} ||f-f^{(L)}||_{L_1([0, \kappa]^2)} =0$, which means $$I_2(f^{(L)}) \stackrel{L_1}\rightarrow I_2(f),$$ by Proposition \ref{ppn:limit}. Now, let $\{N(t): 0 \leq t \leq \kappa \}$ be a Poisson process of rate $1$, and $\partial N(a):=N(\frac{a}{L})-N(\frac{a-1}{L})\sim \dPois(1/L)$.  Then taking $L$ large enough and Definition \ref{defn:integral_elementary},  
\begin{align}
I_2& (f^{(L)}) \nonumber \\  
&=\sum_{j=1}^B b_{jj} \sum_{\ceil{c_{j-1} L} \le a\ne b\le \ceil{c_{j} L}}\partial N(a) \partial N(b)  + 2 \sum_{1 \leq j < j' \leq B} b_{jj'} \sum_{\substack{ \ceil{c_{j-1}  L} \le a \le \ceil{c_j L} \\ \ceil{c'_{j-1}  L} \le b \le \ceil{c'_j L}}} \partial N(a) \partial N(b) + o_{L_1}(1) \nonumber \tag*{(where the $o_{L_1}(1)$-term goes to zero in $L_1$)}\\
& \stackrel{L_1}\rightarrow  2 \sum_{j=1}^B b_{jj} {N_j \choose 2} + 2 \sum_{1 \leq j < j' \leq B} b_{jj'} N_j  N_{j'},  \nonumber \\ 
& =I_2(f), 
\end{align} 
where $\{N_1, N_2, \ldots, N_B\}$ are independent with $N_j \sim \dPois(c_j-c_{j-1})$. 
\end{example}

\end{document}